\newtheorem{mytheorem}{Theorem}[section] 
\newtheorem{mylemma}{Lemma}[section]
\newtheorem{mycorollary}{Corollary}[section]
\theoremstyle{definition}
\newtheorem{mydefinition}{Definition}[section]
\newtheorem{myremark}{Remark}[section]
\numberwithin{equation}{section}  % 
\numberwithin{figure}{section}    % 
\numberwithin{table}{section}     % 
\numberwithin{algorithm}{section}
\newcommand{\myvec}[1]{\textup{\textbf{#1}}}
\newcommand{\dif}{\mathop{}\!\mathrm{d}}
\newcommand{\RS}{\mathbb{R}}
\newcommand{\NS}{\mathbb{N}}
\newcommand{\DS}{\mathbb{D}}
\newcommand{\diag}{\mathrm{diag}\,} % diagonal matrix
\newcommand{\ball}{\bm{\mathrm{B}}} % open ball
\newcommand{\mysum}{\sum\limits}
\newcommand{\mytran}[1]{{#1}^{\top}} 
\begin{document}

\title{Convergence analysis of Anderson acceleration for nonlinear equations
  with H\"older continuous derivatives}

\author{
    Yonghui Ling\thanks{Corresponding author. \textit{E-mail address:} \texttt{yhling@mnnu.edu.cn}}, \
    Zikang Xiong and Juan Liang
    \and
    Department of Mathematics, Minnan Normal University, Zhangzhou 363000, China
    }

\maketitle
%------------------------------------------------------------------------------
\begin{abstract}
  This work investigates the local convergence behavior of Anderson acceleration in solving nonlinear systems.
  We establish local R-linear convergence results for Anderson acceleration with general depth $m$ 
  under the assumptions that the Jacobian of the nonlinear operator is H\"older continuous
  and the corresponding fixed-point function is contractive.
  In the Lipschitz continuous case, we obtain a sharper R-linear convergence factor.
  We also derive a refined residual bound for the depth $m = 1$
  under the same assumptions used for the general depth results.
  Applications to a nonsymmetric Riccati equation from transport theory demonstrate that 
  Anderson acceleration yields comparable results to several existing fixed-point methods
  for the regular cases, and that it brings significant reductions 
  in both the number of iterations and computation time, 
  even in challenging cases involving nearly singular or large-scale problems.

  \textbf{Keywords:} Anderson acceleration, local convergence, H\"older continuity, algebraic Riccati equation

  \textbf{MSC:} 65H10, 15A24
\end{abstract}
%------------------------------------------------------------------------------
\section{Introduction}
\label{sec:Introduction}

Our aim in this paper is to study convergence acceleration techniques 
for nonlinear fixed-point iteration of the form
\begin{equation}
  \label{it:FixedPoint}
  \myvec{x}_{k+1} = \myvec{g}(\myvec{x}_k), \quad k = 0,1,2,\ldots,
\end{equation}
which is commonly used in solving the solution of nonlinear system
\begin{equation}
  \label{eq:f(x)=0}
  \myvec{f}(\myvec{x}) = \myvec{g}(\myvec{x}) - \myvec{x} = \myvec{0},
\end{equation}
where $\myvec{g}: \DS \subset \RS^n \to \DS$ is a given Fr\'echet continuously differentiable nonlinear operator, 
$\DS$ is an open convex subset of $\RS^n$.
The fixed-point iteration \eqref{it:FixedPoint}, 
also referred to as the nonlinear Richardson or Picard iteration,
is a fundamental method for solving the fixed-point problem $\myvec{g}(\myvec{x}) = \myvec{x}$.
However, the sequence generated by \eqref{it:FixedPoint} often exhibits slow convergence 
or may even fail to converge in certain cases.
Consequently, achieving faster convergence rates has been a central focus
in optimization and numerical analysis communities.
Extrapolation methods \cite{Anderson2019,BrezinskiRS2018,BrezinskiR2019}
provide effective strategies for accelerating the convergence of iterative sequences.
A classical example is Anderson acceleration,
also referred to as Pulay mixing or direct inversion in the iterative subspace
in quantum chemistry and physics communities \cite{Pulay1980}.

Anderson acceleration, originally proposed in \cite{Anderson1965},
has received increasing interest over the past decade as a powerful technique
for accelerating fixed-point iterations \eqref{it:FixedPoint}.
Initially designed to solve nonlinear integral equations,
Anderson acceleration has since demonstrated remarkable efficiency 
in accelerating convergence, 
particularly for problems arising from discretized partial differential equations 
\cite{LipnikovSV2013,AnJW2017,PollockRX2019,PollockR2021,RebholzVX2021,PollockR2023,PollockRTX2025}, 
smooth and nonsmooth optimization
\cite{ChenK2019,BianCK2021,BianC2022,ScieurAB2020,FuZB2020,ZhangDB2020,DeSterckH2021,BarreTDA2022,BollapragadaSD2023,OuyangLM2024},
data analysis \cite{HighamS2016,Bach2021},
and machine learning \cite{MaiJ2020,WangHS2021,PasiniYRS2021,PasiniYRS2022}.
The underlying idea of Anderson acceleration is to generate new iterates
by constructing an optimized linear combination of the previous iterates
and their corresponding residuals, thereby significantly improving the rate of convergence.

The following algorithm describes the use of Anderson acceleration to solve
the fixed-point problem $\myvec{g}(\myvec{x}) = \myvec{x}$.
The parameter $m$ is referred to as the depth or window-size of Anderson acceleration,
and the coefficients $\alpha_j^{(k)}$ are called acceleration parameters.

\begin{algorithm}
  \caption{Anderson acceleration for solving \eqref{eq:f(x)=0}} 
  \label{alg:AA}
  Given the depth $m \in \NS$. Choose an initial point $\myvec{x}_0 \in \RS^n$ 
  and set $\myvec{x}_1 = \myvec{g}(\myvec{x}_0)$.
  For $k = 1,2,\ldots$ until convergence, do:
  \begin{itemize}[leftmargin=1em,itemindent=3.5em,parsep=0em,itemsep=0em,topsep=0em]
    \item[Step 1.]
    Set $m_k = \min\{m,k\}$.
    \item[Step 2.]
    Compute $\myvec{f}_k \triangleq \myvec{f}(\myvec{x}_k) = \myvec{g}(\myvec{x}_k) - \myvec{x}_k$.
    \item[Step 3.]
    Solve the convex optimization problem
    \begin{equation}
      \label{optim:alpha}
      \min_{\bm{\alpha}_k = \mytran{(\alpha_0^{(k)},\ldots,\alpha_{m_k}^{(k)})}} 
      \left\|\sum_{j=0}^{m_k} \alpha_j^{(k)}\myvec{f}_{k-m_k+j}\right\| \quad \text{s.t.} \quad
      \sum_{j=0}^{m_k} \alpha_j^{(k)} = 1.
    \end{equation}
    \item[Step 4.]
    Set $\myvec{x}_{k+1} = \mysum_{j=0}^{m_k} \alpha_j^{(k)} \myvec{g}(\myvec{x}_{k-m_k+j})$.
  \end{itemize}
\end{algorithm}

The update rule in Step 4 of Algorithm \ref{alg:AA} can be
extended to the general Anderson mixing form:
$$
\myvec{x}_{k+1} = (1 - \beta_k) \sum_{j=0}^{m_k} \alpha_j^{(k)} \myvec{x}_{k-m_k+j}
  + \beta_k \sum_{j=0}^{m_k} \alpha_j^{(k)} \myvec{g}(\myvec{x}_{k-m_k+j}),
$$
where $\beta_k \in (0,1]$ is a damping parameter.
In this work, we focus on the undamped case $\beta_k \equiv 1$, 
which has received significant attention for its theoretical advantages
in convergence analysis
\cite{TothK2015,ChenK2019,BianCK2021,BianC2022,PollockRX2019,OuyangLM2024}.
Recent developments on the damped case can be found in
\cite{EvansPRX2020,PollockR2021,RebholzX2023,PollockR2023}.
When applied to linearly converging fixed-point iterations,
the minimization step in Anderson acceleration often helps to 
improve both convergence rates and robustness.
This observation has motivated the development of convergence analysis
for Anderson acceleration under minimization-based frameworks,
with early results for contractive mappings \cite{TothK2015,ChenK2019} 
and further generalizations to include noncontractive cases \cite{PollockR2021}.
However, acceleration is not guaranteed 
when the underlying iterations are already quadratically convergent,
as noted in \cite{EvansPRX2020}.

The behavior of Anderson acceleration has been further investigated
by examining its connection to other nonlinear solvers.
It has been demonstrated \cite{FangS2009} that Anderson acceleration
is equivalent to a specialized version of the generalized Broyden method.
In this formulation, 
the approximate inverse Jacobian is computed implicitly 
by solving a constrained optimization problem that satisfies secant conditions 
on recent iterates.
This connection motivates the study of Anderson acceleration 
using tools from the convergence theory of quasi-Newton methods.
We refer to \cite{Kelley2018,Saad2025} for comprehensive reviews on this topic.

While Anderson acceleration had demonstrated good numerical performance 
in various applications,
its first rigorous local convergence result was provided by Toth and Kelley \cite{TothK2015} in 2015.
They proved that if the nonlinear operator $\myvec{g}$ is continuously differentiable 
with Lipschitz continuous derivative, 
then the sequence generated by Algorithm \ref{alg:AA} 
converges R-linearly (see Definition \ref{def:R-Linear}) to a fixed point, 
provided the initial guess is sufficiently close. 
In particular, when the Euclidean norm is used and the depth $m = 1$, 
the sequence converges Q-linearly (see Definition \ref{def:Q-Linear}).
An improved version of the local convergence result was later established by Kelley \cite{Kelley2018}.
This was further developed by Chen and Kelley \cite{ChenK2019},
who both relaxed the required assumptions and improved the R-linear convergence factor.
Bian et al. \cite{BianCK2021,BianC2022} then extended the convergence results
to the case of nonsmooth fixed-point problems.
Subsequent efforts \cite{PollockRX2019,EvansPRX2020,PollockR2021}
provided a comprehensive resolution to the question of
how Anderson acceleration improves the convergence of linearly converging fixed-point iterations.
Specifically, the study in \cite{PollockRX2019} revealed that the linear convergence of fixed-point iterations 
applied to the steady Navier-Stokes equations is improved by Anderson acceleration 
via the gain factor from the underlying optimization.
Evans et al. \cite{EvansPRX2020} subsequently generalized to general contractive mappings,
and Pollock and Rebholz \cite{PollockR2021} further extended
and refined the convergence results to include noncontractive mappings.
Recently, Rebholz and Xiao \cite{RebholzX2023} showed that Anderson acceleration
may reduce the order of convergence for superlinearly converging methods, but
it significantly accelerates the convergence for sublinearly converging methods.

The convergence analyses in \cite{TothK2015,EvansPRX2020,PollockR2021}
rely on the assumption that the derivative of the nonlinear operator is Lipschitz continuous.
To generalize these results and extend the applicability of Anderson acceleration, 
a natural approach is to weaken this assumption to H\"older continuity.
This relaxation is particularly motivated by optimization theory,
where H\"older continuity provides a bridging framework between smooth and nonsmooth problems 
\cite{Nesterov2015,GrapigliaN2017,CartisGT2019,MarumoT2025}.
More specifically,  a H\"older exponent of zero indicates a bounded derivative,
an exponent in $(0,1)$ corresponds to a continuous but potentially non-differentiable derivative,
while an exponent of $1$ represents a Lipschitz continuous derivative,
which is again differentiable.

The goal of this paper is to establish local convergence results for Anderson acceleration
applied to the nonlinear system \eqref{eq:f(x)=0},
assuming that the nonlinear operator $\myvec{f}$ has a H\"older continuous Jacobian.
Our main theoretical contribution is to show that,
when the first derivative of the nonlinear operator $\myvec{f}$ is H\"older continuous
and the associated fixed-point function $\myvec{g}$ is contractive,
the sequence generated by Algorithm \ref{alg:AA} with general depth $m \geq 1$
converges R-linearly to the solution $\myvec{x}^*$ of the nonlinear system \eqref{eq:f(x)=0}.
In the special case where the Jacobian is Lipschitz continuous,
we provide an implicit characterization of the R-factor
in terms of the contraction factor, the gain of the optimization problem,
and the condition number of the Jacobian at $\myvec{x}^*$.
Additionally, we obtain a specific convergence rate for Anderson acceleration
with depth $m = 1$ under the same assumptions used for the general depth results.
We further apply our theoretical framework to compute an approximation 
of minimal positive solution of a nonsymmetric algebraic Riccati equation (NARE)
arising from transport theory.
Comprehensive numerical results demonstrate that Anderson acceleration
substantially outperforms several existing fixed-point methods,
yielding significant reductions in 
both the number of iterations and computation time,
even in challenging cases involving nearly singular or large-scale problems.

The remainder of the paper is organized as follows.
In Section \ref{sec:Preliminaries},
we introduce the necessary notation and present preliminary results for the convergence analysis.
Section \ref{sec:ConvResults} contains our main theoretical results
on the local convergence behavior of Anderson acceleration.
We consider a nonsymmetric algebraic Riccati equation 
to illustrate the effectiveness of Anderson acceleration in Section \ref{sec:Application}.
Finally, we conclude the paper in Section \ref{sec:Conclusions}.

%------------------------------------------------------------------------------
\section{Preliminaries}
\label{sec:Preliminaries}

Throughout this paper, vectors are columns by default and are denoted by bold lowercase letters, e.g., $\myvec{v}$,
while matrices are denoted by regular uppercase letters, e.g., $V$, which is clear from the context.
We use $\diag(\myvec{v})$ to denote the diagonal matrix with the vector $\myvec{v}$ on its diagonal,
and use $I$ to denote the identity operator or the identity matrix with proper dimension.
If there is potential confusion, we will use $I_n$ to denote identity matrix of dimension $n$.
The symbol $\myvec{e}_i = (0,\ldots,0,\underset{i}{1},0,\ldots,0)^{\top} \in \RS^n$ is $i$th column of the identity matrix $I_n$.
Let $\myvec{e} = (1,1,\ldots,1)^{\top}$ with proper dimension.
For a square nonsingular matrix $A \in \RS^{n \times n}$, we use $\kappa(A)$ to denote the condition number of $A$.

For $\myvec{x} \in \RS^n$ and real number $r > 0$, we use $\ball(\myvec{x},r)$
to stand for the open ball with center $\myvec{x}$ and radius $r$.
Recall that the Jacobian matrix of a continuously differentiable nonlinear operator
$\myvec{h}: \RS^n \to \RS^n$ at point $\myvec{x} \in \RS^n$ is represented by $\myvec{h}'(\myvec{x})$.
We recall the notions of Q- and R-order of convergence for a convergent sequence $\{\myvec{x}_k\}$ in $\RS^n$.
For more details about these two notions, 
see \cite{Jay2001,Potra1989} or the recent review paper \cite{Catinas2021} and the references therein.

\begin{mydefinition}
  \label{def:Q-Linear}
  A sequence $\{\myvec{x}_k\}$ is said to converge to $\myvec{x}^* \in \RS^n$ 
  with Q-order (at least) $q \geq 1$ if there exist constants $c \geq 0$ and $N \geq 0$ 
  such that
  $$
  \|\myvec{x}_{k+1} - \myvec{x}^*\| \leq c\|\myvec{x}_k - \myvec{x}^*\|^q
  \quad \text{for all} \quad k \geq N.
  $$
  In the special case $q = 1$, assuming $c < 1$, we said that 
  the sequence $\{\myvec{x}_k\}$ converges Q-linearly to $\myvec{x}^*$.
\end{mydefinition}

\begin{mydefinition}
  \label{def:R-Linear}
  A sequence $\{\myvec{x}_k\}$ is said to converge to $\myvec{x}^* \in \RS^n$
  with R-order (at least) $q \geq 1$ if there exists a positive real sequence $\{t_k\}$ 
  converging to zero with Q-order at least $q$ such that 
  $\|\myvec{x}_k - \myvec{x}^*\| \leq t_k$.
  When $q = 1$, we said that the sequence $\{\myvec{x}_k\}$ converges R-linearly to $\myvec{x}^*$.
\end{mydefinition}

The notion about H\"older continuous functions is as follows.

\begin{mydefinition}
  Let $\myvec{f}: \DS \subset \RS^n \to \RS^n$ be a Fr\'echet continuously differentiable operator,
  $\DS$ open and convex.
  We say that the Jacobian $\myvec{f}'$ is H\"older continuous with exponent $\nu \in (0,1]$ 
  if there exists a constant $H_\nu > 0$ such that
  \begin{equation}
    \label{cond:Holder}
    \|\myvec{f}'(\myvec{x}) - \myvec{f}'(\myvec{y})\| \leq H_{\nu}\|\myvec{x} - \myvec{y}\|^{\nu}
    \quad \text{for all} \quad \myvec{x},\myvec{y} \in \DS.
  \end{equation}
\end{mydefinition}

It should be noted that the operator $\myvec{f}$ determines $H_\nu$ for each $\nu \in (0,1]$,
while $\nu$ is not a constant determined by $\myvec{f}$.
In practice, determining the H\"older constant $H_\nu$ of a real-world operator
for a given $\nu \in (0,1]$ is often a challenging problem.
H\"{o}lder continuity, which generalizes Lipschitz continuity, 
has been extensively applied in complexity analyses of optimization methods
\cite{DevolderGN2014,Nesterov2015,CartisGT2017,CartisGT2019,GrapigliaN2017,GhadimiLZ2019,GrapigliaN2019,GrapigliaN2020,MarumoT2025}. 
Moreover, classical convergence results for Newton's method 
under H\"{o}lder-type continuous have been developed in 
\cite{Rokne1972,Argyros1992,Hernandez2001,Huang2002,Huang2004,LiS2008,ShenL2008}
for solving general nonlinear operator equations in Banach spaces.

The equation $\myvec{g}(\myvec{x}) = \myvec{x}$, 
an alternative formulation of the nonlinear equation \eqref{eq:f(x)=0}, 
is widely known as the fixed-point problem. 
A point $\myvec{x}^*$ satisfying $\myvec{g}(\myvec{x}^*) = \myvec{x}^*$ is called a fixed point of $\myvec{g}$. 
The operator $\myvec{g}$ is referred to as the fixed-point function.
It is worth noting that if the nonlinear operator $\myvec{f}$ is H\"older continuous with exponent $\nu \in (0,1]$,
then the fixed-point function $\myvec{g}$ is also H\"older continuous with the same exponent $\nu$.
Indeed, for any $\myvec{x}, \myvec{y} \in \DS$, we have
$$
\|\myvec{g}'(\myvec{x}) - \myvec{g}'(\myvec{y})\|
  = \|(\myvec{g}'(\myvec{x}) - I) - (\myvec{g}'(\myvec{y}) - I)\|
  = \|\myvec{f}'(\myvec{x}) - \myvec{f}'(\myvec{y})\|
  \leq H_\nu \|\myvec{x} - \myvec{y}\|^{\nu}.
$$
We said that the fixed-point function $\myvec{g}$ is contractive 
if there exists a constant $\theta \in (0,1)$ such that
\begin{equation}
  \label{cond:Contractive}
  \|\myvec{g}(\myvec{x}) - \myvec{g}(\myvec{y})\| \leq \theta \|\myvec{x} - \myvec{y}\|
\end{equation}
holds for all $\myvec{x}, \myvec{y} \in \DS$.
The constant $\theta$ is referred the contraction factor of $\myvec{g}$.
This condition implies that
\begin{equation}
  \label{cond:Contractive2}
  \|\myvec{g}'(\myvec{x})\| \leq \theta < 1
\end{equation}
holds for any $\myvec{x} \in \DS$.
It is guaranteed by the contraction mapping theorem \cite{OrtegaR1970}
that $\myvec{g}$ has a unique fixed point $\myvec{x}^* \in \DS$, 
which is the unique solution of the nonlinear system \eqref{eq:f(x)=0}.

The following lemma will be used in the convergence analysis of Anderson acceleration.

\begin{mylemma}
  \label{lem:norm_f(x)}
  Assume that there is $\myvec{x}^* \in \RS^n$ such that $\myvec{f}(\myvec{x}^*) = \myvec{0}$
  and $\myvec{f}'(\myvec{x}^*)^{-1}$ exists.
  If the Jacobian $\myvec{f}'$ is H\"older continuous with exponent $\nu$ 
  in $\ball(\myvec{x}^*,r_\nu)$, where
  \begin{equation}
    \label{cons:r_nu}
    r_\nu = \left(\frac{1}{H_\nu \|\myvec{f}'(\myvec{x}^*)^{-1}\|}\right)^{1/\nu},
  \end{equation}
  then for any $\myvec{x} \in \ball(\myvec{x}^*,r_\nu)$ we have
  \begin{equation}
    \label{ineq:norm_f(x)}
    \frac{\nu}{(1+\nu)\|\myvec{f}'(\myvec{x}^*)^{-1}\|} \|\myvec{x} - \myvec{x}^*\|
      \leq \|\myvec{f}(\myvec{x})\| 
      \leq \frac{(2+\nu)\|\myvec{f}'(\myvec{x}^*)\|}{1+\nu} \|\myvec{x} - \myvec{x}^*\|.
  \end{equation}
\end{mylemma}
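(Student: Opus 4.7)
The plan is to use the standard Newton-Kantorovich style decomposition based on the fundamental theorem of calculus. Since $\myvec{f}(\myvec{x}^*) = \myvec{0}$ and $\myvec{f}$ is $C^1$ on the convex set $\ball(\myvec{x}^*,r_\nu)$, I would start by writing
$$
\myvec{f}(\myvec{x}) = \myvec{f}'(\myvec{x}^*)(\myvec{x}-\myvec{x}^*)
  + \int_0^1 \bigl[\myvec{f}'(\myvec{x}^* + t(\myvec{x}-\myvec{x}^*)) - \myvec{f}'(\myvec{x}^*)\bigr](\myvec{x}-\myvec{x}^*)\dif t,
$$
so that the deviation from the linear model is isolated.

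Next I would bound the integral remainder using the H\"older condition \eqref{cond:Holder}: for each $t \in [0,1]$ the argument $\myvec{x}^* + t(\myvec{x}-\myvec{x}^*)$ lies in $\ball(\myvec{x}^*,r_\nu)$, hence the integrand norm is at most $H_\nu t^\nu \|\myvec{x}-\myvec{x}^*\|^{1+\nu}$, giving
$$
\left\|\int_0^1 \bigl[\myvec{f}'(\myvec{x}^* + t(\myvec{x}-\myvec{x}^*)) - \myvec{f}'(\myvec{x}^*)\bigr](\myvec{x}-\myvec{x}^*)\dif t\right\|
  \leq \frac{H_\nu}{1+\nu}\|\myvec{x}-\myvec{x}^*\|^{1+\nu}.
$$
A key step is then to absorb the nonlinear factor $\|\myvec{x}-\myvec{x}^*\|^\nu$: the choice of $r_\nu$ in \eqref{cons:r_nu} gives exactly $\|\myvec{x}-\myvec{x}^*\|^\nu \leq 1/(H_\nu\|\myvec{f}'(\myvec{x}^*)^{-1}\|)$, so the remainder is bounded by $\|\myvec{x}-\myvec{x}^*\|/\bigl((1+\nu)\|\myvec{f}'(\myvec{x}^*)^{-1}\|\bigr)$.

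For the upper bound I would apply the triangle inequality to $\myvec{f}(\myvec{x})$, use $\|\myvec{f}'(\myvec{x}^*)(\myvec{x}-\myvec{x}^*)\| \leq \|\myvec{f}'(\myvec{x}^*)\|\,\|\myvec{x}-\myvec{x}^*\|$, and further estimate $1/\|\myvec{f}'(\myvec{x}^*)^{-1}\| \leq \|\myvec{f}'(\myvec{x}^*)\|$ in the remainder term; summing the coefficients $1 + 1/(1+\nu) = (2+\nu)/(1+\nu)$ yields the right-hand inequality of \eqref{ineq:norm_f(x)}. For the lower bound I would apply the reverse triangle inequality together with the classical estimate $\|\myvec{f}'(\myvec{x}^*)(\myvec{x}-\myvec{x}^*)\| \geq \|\myvec{x}-\myvec{x}^*\|/\|\myvec{f}'(\myvec{x}^*)^{-1}\|$, which leaves a coefficient $1 - 1/(1+\nu) = \nu/(1+\nu)$ after the remainder is subtracted. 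No step is really an obstacle here; the only subtlety is recognizing that $r_\nu$ has been calibrated precisely so that the H\"older remainder, which is a priori of order $\|\myvec{x}-\myvec{x}^*\|^{1+\nu}$, can be dominated by a fraction of the leading linear term, thus preserving R-linearity of both bounds in $\|\myvec{x}-\myvec{x}^*\|$.
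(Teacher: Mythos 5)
Your proposal is correct and follows essentially the same route as the paper: the same first-order Taylor decomposition with integral remainder, the same H\"older bound $\frac{H_\nu}{1+\nu}\|\myvec{x}-\myvec{x}^*\|^{1+\nu}$ on the remainder, the same absorption of $\|\myvec{x}-\myvec{x}^*\|^\nu$ via the definition of $r_\nu$, and the same triangle/reverse-triangle arguments (the paper merely premultiplies by $\myvec{f}'(\myvec{x}^*)^{-1}$ before applying the reverse triangle inequality for the lower bound, which is equivalent to your direct use of $\|A\myvec{v}\|\geq\|\myvec{v}\|/\|A^{-1}\|$).
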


\begin{proof}
  Take $\myvec{x} \in \ball(\myvec{x}^*,r_\nu)$. Since
  \begin{align*}
    \myvec{f}(\myvec{x}) 
      &= \myvec{f}(\myvec{x}) - \myvec{f}(\myvec{x}^*)
        = \int_0^1 \myvec{f}'(\myvec{x}^* + t(\myvec{x} - \myvec{x}^*)) (\myvec{x} - \myvec{x}^*)\dif{t} \\
      &= \int_0^1 [\myvec{f}'(\myvec{x}^* + t(\myvec{x} - \myvec{x}^*)) - \myvec{f}'(\myvec{x}^*)] (\myvec{x} - \myvec{x}^*)\dif{t}
        + \myvec{f}'(\myvec{x}^*) (\myvec{x} - \myvec{x}^*),
  \end{align*}
  it follows from the H\"older condition \eqref{cond:Holder} that
  \begin{align*}
    \|\myvec{f}(\myvec{x})\|
      &\leq \int_0^1 \|\myvec{f}'(\myvec{x}^* + t(\myvec{x} - \myvec{x}^*)) - \myvec{f}'(\myvec{x}^*)\| \|\myvec{x} - \myvec{x}^*\|\dif{t}
        + \|\myvec{f}'(\myvec{x}^*)\| \|\myvec{x} - \myvec{x}^*\| \\
      &\leq H_\nu \int_0^1 t^\nu\|\myvec{x} - \myvec{x}^*\|^{1+\nu} \dif{t} + \|\myvec{f}'(\myvec{x}^*)\| \|\myvec{x} - \myvec{x}^*\| \\
      &= \left(\frac{H_\nu}{1+\nu} \|\myvec{x} - \myvec{x}^*\|^\nu + \|\myvec{f}'(\myvec{x}^*)\|\right) \|\myvec{x} - \myvec{x}^*\|.
  \end{align*}
  Noting that $\|\myvec{x} - \myvec{x}^*\| < r_\nu$, we obtain
  $$
  \frac{H_\nu}{1+\nu} \|\myvec{x} - \myvec{x}^*\|^\nu
    < \frac{1}{(1+\nu)\|\myvec{f}'(\myvec{x}^*)^{-1}\|}
    \leq \frac{\|\myvec{f}'(\myvec{x}^*)\|}{1+\nu}.
  $$
  This leads to
  $$
  \|\myvec{f}(\myvec{x})\| 
    \leq \left(\frac{2+\nu}{1+\nu}\right) \|\myvec{f}'(\myvec{x}^*)\| \|\myvec{x} - \myvec{x}^*\|.
  $$
  On the other hand, we apply standard analytical techniques to deduce that
  \begin{align*}
    \myvec{f}'(\myvec{x}^*)^{-1}\myvec{f}(\myvec{x})
      &= \myvec{f}'(\myvec{x}^*)^{-1} 
        \int_0^1 \myvec{f}'(\myvec{x}^* + t(\myvec{x} - \myvec{x}^*)) (\myvec{x} - \myvec{x}^*)\dif{t} \\
      &= (\myvec{x} - \myvec{x}^*) - 
        \int_0^1 \myvec{f}'(\myvec{x}^*)^{-1}[\myvec{f}'(\myvec{x}^*) 
          - \myvec{f}'(\myvec{x}^* + t(\myvec{x} - \myvec{x}^*))](\myvec{x} - \myvec{x}^*)\dif{t}.
  \end{align*}
  Then, the H\"{o}lder condition \eqref{cond:Holder} can be applied again to yield
  \begin{align*}
    \|\myvec{f}'(\myvec{x}^*)^{-1}\myvec{f}(\myvec{x})\|
      &\geq \|\myvec{x} - \myvec{x}^*\| - 
        H_\nu\|\myvec{f}'(\myvec{x}^*)^{-1}\| \int_0^1 t^\nu\|\myvec{x} - \myvec{x}^*\|^{1+\nu}\dif{t}\\
      &= \left(1 - \frac{H_\nu}{1+\nu}\|\myvec{f}'(\myvec{x}^*)^{-1}\| \|\myvec{x} - \myvec{x}^*\|^\nu\right) 
        \|\myvec{x} - \myvec{x}^*\|.
  \end{align*}
  Thus, we have
  \begin{align*}
    \|\myvec{f}(\myvec{x})\| 
      \geq \frac{\|\myvec{f}'(\myvec{x}^*)^{-1}\myvec{f}(\myvec{x})\|}{\|\myvec{f}'(\myvec{x}^*)^{-1}\|}
      &\geq \left(\frac{1}{\|\myvec{f}'(\myvec{x}^*)^{-1}\|} - \frac{H_\nu\|\myvec{x} - \myvec{x}^*\|^\nu}{1+\nu}\right)
        \|\myvec{x} - \myvec{x}^*\| \\
      &\geq \left(\frac{\nu}{(1+\nu)\|\myvec{f}'(\myvec{x}^*)^{-1}\|}\right)\|\myvec{x} - \myvec{x}^*\|.
  \end{align*}
  With this, the proof of the lemma is complete.
\end{proof}

\begin{myremark}
  By applying the Banach lemma \cite{OrtegaR1970} with standard techniques, as presented in \cite{Ferreira2009}, 
  we can show that the H\"older continuity of $\myvec{f}'$ with exponent $\nu$ 
  in $\ball(\myvec{x}^*,r_\nu)$ guarantees that $\myvec{f}'(\myvec{x})$ is nonsingular for any $\myvec{x} \in \ball(\myvec{x}^*,r_\nu)$,
  where $r_\nu$ is given by \eqref{cons:r_nu}.
\end{myremark}

Applying the above lemma, we arrive at the following result.

\begin{mylemma}
  Let the assumptions of Lemma $\ref{lem:norm_f(x)}$ hold.
  If $\myvec{x}_0 \in \ball(\myvec{x}^*,r_\nu)$, then for any $\myvec{x} \in \ball(\myvec{x}^*,r_\nu)$,
  we have
  \begin{equation}
    \label{ineq:norm_f(x)/f(x0)}
    \frac{\nu}{(2+\nu)\kappa(\myvec{f}'(\myvec{x}^*))}
      \cdot \frac{\|\myvec{x} - \myvec{x}^*\|}{\|\myvec{x}_0 - \myvec{x}^*\|}
    \leq \frac{\|\myvec{f}(\myvec{x})\|}{\|\myvec{f}(\myvec{x}_0)\|}
    \leq \frac{(2+\nu)\kappa(\myvec{f}'(\myvec{x}^*))}{\nu}
      \cdot \frac{\|\myvec{x} - \myvec{x}^*\|}{\|\myvec{x}_0 - \myvec{x}^*\|}.
  \end{equation}
\end{mylemma}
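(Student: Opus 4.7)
The plan is to obtain both inequalities in \eqref{ineq:norm_f(x)/f(x0)} as a direct corollary of the two-sided bound \eqref{ineq:norm_f(x)} established in Lemma \ref{lem:norm_f(x)}. Since both $\myvec{x}$ and $\myvec{x}_0$ lie in $\ball(\myvec{x}^*,r_\nu)$, that lemma applies to each of them, and the ratio $\|\myvec{f}(\myvec{x})\|/\|\myvec{f}(\myvec{x}_0)\|$ can be sandwiched by pairing the appropriate bounds.

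More concretely, I would first bound $\|\myvec{f}(\myvec{x})\|/\|\myvec{f}(\myvec{x}_0)\|$ from above by applying the upper bound in \eqref{ineq:norm_f(x)} to the numerator (yielding the factor $(2+\nu)\|\myvec{f}'(\myvec{x}^*)\|/(1+\nu)$ times $\|\myvec{x}-\myvec{x}^*\|$) and the lower bound in \eqref{ineq:norm_f(x)} to the denominator (yielding $\nu/((1+\nu)\|\myvec{f}'(\myvec{x}^*)^{-1}\|)$ times $\|\myvec{x}_0-\myvec{x}^*\|$). The prefactors $(1+\nu)$ cancel, and the product $\|\myvec{f}'(\myvec{x}^*)\|\cdot\|\myvec{f}'(\myvec{x}^*)^{-1}\|$ collapses to $\kappa(\myvec{f}'(\myvec{x}^*))$ by definition, producing the right-hand side of \eqref{ineq:norm_f(x)/f(x0)}.

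For the lower bound I would do the symmetric pairing: use the lower bound in \eqref{ineq:norm_f(x)} for $\|\myvec{f}(\myvec{x})\|$ and the upper bound for $\|\myvec{f}(\myvec{x}_0)\|$. The same cancellations take place, and the reciprocal structure of the two bounds in \eqref{ineq:norm_f(x)} is precisely what makes the upper and lower constants in \eqref{ineq:norm_f(x)/f(x0)} reciprocals of each other. There is no real obstacle here: the only thing to verify is that Lemma \ref{lem:norm_f(x)} is indeed applicable to both points, which follows from the hypothesis $\myvec{x}_0\in\ball(\myvec{x}^*,r_\nu)$ and the assumption $\myvec{x}\in\ball(\myvec{x}^*,r_\nu)$. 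The proof is therefore essentially algebraic rearrangement once the two applications of Lemma \ref{lem:norm_f(x)} are combined.
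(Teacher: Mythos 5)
Your proposal is correct and is exactly the argument the paper intends: the paper states this lemma as an immediate consequence of Lemma \ref{lem:norm_f(x)} and omits the proof, and your pairing of the upper bound for the numerator with the lower bound for the denominator (and vice versa), followed by cancellation of the $(1+\nu)$ factors and the identification $\|\myvec{f}'(\myvec{x}^*)\|\,\|\myvec{f}'(\myvec{x}^*)^{-1}\| = \kappa(\myvec{f}'(\myvec{x}^*))$, is precisely what yields \eqref{ineq:norm_f(x)/f(x0)}. The only implicit point worth a half-sentence is that $\myvec{x}_0 \neq \myvec{x}^*$ so that the denominators are nonzero.
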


The following lemma is also used in the convergence analysis of Anderson acceleration. 
The proof is straightforward and hence omitted.

\begin{mylemma}
  \label{lem:q}
  Let $m$ be a positive integer and $\tau \in (0,1)$.
  If $0 \leq \zeta < 1 - \tau$, then the equation
  \begin{equation}
    \label{eq:q}
    q^{m+1} - \tau q^m - \zeta = 0
  \end{equation}
  has a unique root in the open interval $\big(m\tau/(m+1),1\big)$.
\end{mylemma}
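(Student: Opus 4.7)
The plan is to study the polynomial $p(q) = q^{m+1} - \tau q^m - \zeta$ on the closed interval $[m\tau/(m+1),1]$ and combine an intermediate value argument with a monotonicity argument to get both existence and uniqueness of a root in the open interval.

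First I would evaluate $p$ at the two endpoints. At $q = 1$ one has $p(1) = 1 - \tau - \zeta$, which is strictly positive by the assumption $\zeta < 1 - \tau$. At $q = m\tau/(m+1)$ one gets
\[
p\bigl(m\tau/(m+1)\bigr) = \bigl(m\tau/(m+1)\bigr)^m \cdot \bigl(m\tau/(m+1) - \tau\bigr) - \zeta
  = -\frac{\tau}{m+1}\bigl(m\tau/(m+1)\bigr)^m - \zeta,
\]
which is strictly negative since $\tau \in (0,1)$ and $\zeta \geq 0$. By continuity of $p$ and the intermediate value theorem, there exists at least one root of \eqref{eq:q} in the open interval $(m\tau/(m+1),1)$.

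For uniqueness I would differentiate and factor:
\[
p'(q) = (m+1)q^m - m\tau q^{m-1} = q^{m-1}\bigl((m+1)q - m\tau\bigr).
\]
For every $q > m\tau/(m+1)$ both factors are strictly positive, so $p$ is strictly increasing on $(m\tau/(m+1),1)$. Hence the root found above is unique in this interval, which completes the proof.

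I do not expect a serious obstacle here: the only mildly delicate point is to check that the lower endpoint value is strictly negative in the boundary case $\zeta = 0$, which is handled by the fact that $\tau > 0$ makes the remaining term strictly negative on its own. Everything else reduces to elementary calculus on a single-variable polynomial.
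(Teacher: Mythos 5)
Your proof is correct: the endpoint evaluations, the sign computation at $q = m\tau/(m+1)$, and the factorization $p'(q) = q^{m-1}\bigl((m+1)q - m\tau\bigr)$ showing strict monotonicity on the interval are all accurate. The paper omits the proof of this lemma as ``straightforward,'' and your intermediate-value-plus-monotonicity argument is precisely the standard one intended, so there is nothing further to compare.
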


In \cite{TothK2015}, a key idea in establishing convergence is to assume that the inequality
$$
  \left\|\sum_{j=0}^{m_k} \alpha_j^{(k)} \myvec{f}(\myvec{x}_{k-m_k+j})\right\| \leq \|\myvec{f}(\myvec{x}_k)\|
$$
holds when $\{\alpha_j^{(k)}\}_{j=0}^{m_k}$ is the solution of the optimization problem \eqref{optim:alpha}. 
This assumption is relaxed in \cite{ChenK2019}, 
where it is only required that norm of the linear combination of residuals does not exceed 
that of the most recent residual. 
To understand how Anderson acceleration achieves faster convergence, 
Pollock et al. introduced in \cite{PollockRX2019} the following optimization gain
\begin{equation}
  \label{cons:eta_k}
  \eta_k := \frac{\left\|\mysum_{j=0}^{m_k} \alpha_j^{(k)} \myvec{f}(\myvec{x}_{k-m_k+j})\right\|}{\|\myvec{f}(\myvec{x}_k)\|}.
\end{equation}
This quantity, satisfying $0 \leq \eta_k \leq 1$, was subsequently employed in \cite{EvansPRX2020,PollockR2021,RebholzX2023,PollockR2023}
to demonstrate that Anderson acceleration improves linear convergence rate of fixed-point iterations.
Using the optimization gain, we establish in this work the local convergence of Anderson acceleration 
for depth $m=1$, assuming H\"older continuity of the derivative.

%------------------------------------------------------------------------------
\section{Convergence results}
\label{sec:ConvResults}

This section is devoted to the local convergence analysis of Anderson acceleration
for solving the nonlinear system \eqref{eq:f(x)=0} with H\"older continuous derivatives.

%------------------------------------------------------------------------------
\subsection{Local convergence}

We first present a local convergence result for Anderson acceleration with general depths $m$. 
The following theorem establishes that the sequence $\{\myvec{x}_k\}$ generated by Algorithm \ref{alg:AA}
converges R-linearly to the solution $\myvec{x}^*$ of the nonlinear system \eqref{eq:f(x)=0}
under the assumptions that the Jacobian $\myvec{f}'$ is H\"older continuous with exponent $\nu \in (0,1]$
and the fixed-point function $\myvec{g}$ is contractive with factor $\theta \in (0,1)$ 
in a prescribed ball of the solution $\myvec{x}^*$.

\begin{mytheorem}
  \label{th:LocalConvAnderson(m)}
  Assume that there is an $\myvec{x}^* \in \RS^n$ such that $\myvec{f}(\myvec{x}^*) = \myvec{0}$
  and $\myvec{f}'(\myvec{x}^*)$ is nonsingular. Suppose that:
  \begin{enumerate}[label=\textup{(\roman*)}]
    \item There is a constant $M_\alpha$ such that $\mysum_{j=0}^{m_k} |\alpha_j^{(k)}| \leq M_\alpha$ for all $k \geq 0$.
    \item The Jacobian $\myvec{f}'$ is H\"older continuous with exponent $\nu \in (0,1]$ in $\ball(\myvec{x}^*,r)$,
    where $r = \min\{r_\nu,\hat{r}_\nu\}$, $r_\nu$ is defined in \eqref{cons:r_nu} and
    \begin{equation}
      \label{cons:hat_r_nu}
      \hat{r}_\nu = \left(\frac{(1-\theta)\nu}{M_\alpha(1 + M_\alpha^\nu)}\right)^{1/\nu}
        \cdot\frac{\nu r_\nu}{(2+\nu)\kappa(\myvec{f}'(\myvec{x}^*))}.
    \end{equation}
    \item The fixed-point function $\myvec{g}$ is contractive with factor $\theta \in (0,1)$ in $\ball(\myvec{x}^*,r)$.
  \end{enumerate}
  Let $\{\myvec{x}_k\}$ be the sequence generated by Algorithm $\ref{alg:AA}$
  with starting point $\myvec{x}_0 \in \ball(\myvec{x}^*,r)$.
  Set $\tau := \theta\eta_k$ and 
  \begin{equation}
    \label{cons:zeta}
    \zeta := \frac{(2+\nu)^\nu H_\nu M_\alpha (1 + M_\alpha^\nu)}{\nu^{1+\nu}}
  \cdot\kappa(\myvec{f}'(\myvec{x}^*)) \|\myvec{f}'(\myvec{x}^*)^{-1}\|\|\myvec{x}_0 - \myvec{x}^*\|^\nu,
  \end{equation}
  where $\eta_k$ is the optimization gain defined by \eqref{cons:eta_k}.
  If $\zeta < 1 - \tau$ and 
  $$
  \frac{2+\nu}{\nu} qM_\alpha \kappa(\myvec{f}'(\myvec{x}^*)) \leq 1,
  $$
  where $q$ is the unique root of equation \eqref{eq:q} in the interval $\big(m\tau/(m+1),1\big)$,
  then the sequence $\{\myvec{x}_k\}$ is contained in $\ball(\myvec{x}^*,r)$ 
  and converges R-linearly to the solution $\myvec{x}^*$
  in the sense that
  \begin{equation}
    \label{ineq:ConvRate}
    \limsup_{k \to \infty} \left(\frac{\|\myvec{f}(\myvec{x}_{k})\|}{\|\myvec{f}(\myvec{x}_k)\|}\right)^{1/k} \leq q.
  \end{equation}
\end{mytheorem}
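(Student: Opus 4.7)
The plan is to proceed by strong induction on $k$, simultaneously maintaining two invariants: (a) the iterate $\myvec{x}_k$ lies in the ball $\ball(\myvec{x}^*,r)$, so that the Hölder and contractivity hypotheses remain applicable; and (b) the scaled residual $\sigma_k := \|\myvec{f}(\myvec{x}_k)\|/\|\myvec{f}(\myvec{x}_0)\|$ is dominated by a positive bounding sequence $\{t_k\}$ that decays Q-linearly with factor $q$. The two-sided inequality (\ref{ineq:norm_f(x)/f(x0)}) then transfers Q-linear decay of $\sigma_k$ into R-linear decay of $\|\myvec{x}_k-\myvec{x}^*\|$, yielding (\ref{ineq:ConvRate}).

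The central step is to derive the one-step recurrence
$$
\sigma_{k+1} \;\leq\; \tau\, \sigma_k \;+\; \zeta\, \max_{0\leq j \leq m_k}\sigma_{k-m_k+j}
$$
with $\tau = \theta\eta_k$ and $\zeta$ exactly as in (\ref{cons:zeta}). I would start from the Anderson update together with $\sum_{j=0}^{m_k}\alpha_j^{(k)}=1$ and $\myvec{g}(\myvec{x}^*)=\myvec{x}^*$, yielding
$$
\myvec{x}_{k+1} - \myvec{x}^* \;=\; \sum_{j=0}^{m_k} \alpha_j^{(k)}\bigl[\myvec{g}(\myvec{x}_{k-m_k+j}) - \myvec{g}(\myvec{x}^*)\bigr],
$$
then expand each difference as $\myvec{g}'(\myvec{x}^*)(\myvec{x}_{k-m_k+j}-\myvec{x}^*)$ plus a Hölder remainder of size $\tfrac{H_\nu}{1+\nu}\|\myvec{x}_{k-m_k+j}-\myvec{x}^*\|^{1+\nu}$, and substitute $\myvec{x}_{k-m_k+j}-\myvec{x}^* = \myvec{f}'(\myvec{x}^*)^{-1}\myvec{f}_{k-m_k+j}$ modulo a similar Hölder error obtained by inverting the Taylor expansion of $\myvec{f}$ at $\myvec{x}^*$. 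The linear part collapses to $\myvec{g}'(\myvec{x}^*)\myvec{f}'(\myvec{x}^*)^{-1}\sum_j\alpha_j^{(k)}\myvec{f}_{k-m_k+j}$, whose norm is bounded by $\theta\|\myvec{f}'(\myvec{x}^*)^{-1}\|\,\eta_k\|\myvec{f}_k\|$ via (\ref{cond:Contractive2}) and the gain identity (\ref{cons:eta_k}). The nonlinear part is estimated using $\sum_j|\alpha_j^{(k)}|\leq M_\alpha$, the Hölder bound, and the induction hypothesis controlling $\|\myvec{x}_{k-m_k+j}-\myvec{x}^*\|$ by $\|\myvec{x}_0-\myvec{x}^*\|$; the combination of one factor of $M_\alpha$ (from the linear combination) with an additional $M_\alpha^\nu$ (arising when the Hölder exponent is applied to a quantity of size $M_\alpha$) produces the $(1+M_\alpha^\nu)$ in $\zeta$. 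A final application of the bounds in Lemma \ref{lem:norm_f(x)} converts $\|\myvec{x}_{k+1}-\myvec{x}^*\|$ back into $\|\myvec{f}_{k+1}\|$, absorbing the factors $(2+\nu)/\nu$ and $\kappa(\myvec{f}'(\myvec{x}^*))$ and producing exactly the constant in (\ref{cons:zeta}).

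Given the recurrence and the hypothesis $\zeta<1-\tau$, Lemma \ref{lem:q} supplies the unique $q\in(m\tau/(m+1),1)$ with $q^{m+1}=\tau q^m + \zeta$. A standard bounding-sequence argument then constructs $\{t_k\}$ satisfying the same recurrence with equality and with $t_0,\ldots,t_m$ chosen to dominate $\sigma_0,\ldots,\sigma_m$; such a sequence decays at Q-linear rate $q$, so $\sigma_k\leq t_k$ yields $\limsup_{k\to\infty}\sigma_k^{1/k}\leq q$. The trapping condition $\myvec{x}_{k+1}\in\ball(\myvec{x}^*,r)$ is verified by combining the upper half of (\ref{ineq:norm_f(x)/f(x0)}) with the decay of $\sigma_k$ and the hypothesis $\tfrac{2+\nu}{\nu}qM_\alpha\kappa(\myvec{f}'(\myvec{x}^*))\leq 1$, together with the calibration of $\hat r_\nu$ in (\ref{cons:hat_r_nu}), which is engineered precisely so that the Hölder remainder does not dominate the $\zeta$-term.

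The main obstacle will be the bookkeeping in the recurrence derivation: the Hölder correction splits into contributions from linearising $\myvec{g}$, from inverting the linearisation of $\myvec{f}$, and from the dispersion of the $\myvec{x}_{k-m_k+j}$ around their affine mean, and each must be premultiplied by the correct combination of $M_\alpha$, $\kappa$, and $\|\myvec{f}'(\myvec{x}^*)^{-1}\|$. Assembling these pieces into exactly the constant $(2+\nu)^\nu H_\nu M_\alpha(1+M_\alpha^\nu)/\nu^{1+\nu}$ of (\ref{cons:zeta}) — rather than a weaker constant that would violate $\zeta<1-\tau$ — while ensuring that the same $\sigma$-normalisation appears on both sides of the recurrence so that the characteristic polynomial (\ref{eq:q}) is recovered verbatim, is the delicate portion of the argument.
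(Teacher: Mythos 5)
Your proposal has a genuine gap in the derivation of the one-step recurrence: the leading coefficient cannot come out as $\tau=\theta\eta_k$ along the route you describe. You bound the \emph{error} $\|\myvec{x}_{k+1}-\myvec{x}^*\|$ by linearising $\myvec{g}$ at $\myvec{x}^*$ and substituting $\myvec{x}_{k-m_k+j}-\myvec{x}^*\approx\myvec{f}'(\myvec{x}^*)^{-1}\myvec{f}_{k-m_k+j}$, obtaining a linear part of norm at most $\theta\|\myvec{f}'(\myvec{x}^*)^{-1}\|\eta_k\|\myvec{f}_k\|$, and you then convert back to the residual via Lemma \ref{lem:norm_f(x)}. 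But that conversion multiplies \emph{every} term, including the linear one, by $\tfrac{2+\nu}{1+\nu}\|\myvec{f}'(\myvec{x}^*)\|$, so the coefficient of $\|\myvec{f}_k\|$ in your recurrence is $\tfrac{2+\nu}{1+\nu}\,\kappa(\myvec{f}'(\myvec{x}^*))\,\theta\eta_k$, not $\theta\eta_k$. The condition number and the factor $\tfrac{2+\nu}{1+\nu}>1$ can be absorbed into $\zeta$ only for the H\"older remainders (which carry a spare power of $\|\myvec{x}_0-\myvec{x}^*\|^{\nu}$); they cannot be absorbed into the first-order term, whose coefficient must be exactly the $\tau$ appearing in the characteristic equation \eqref{eq:q}. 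Since $\kappa\geq 1$, your effective $\tau$ may well exceed $1$, and the hypothesis $\zeta<1-\tau$ of the theorem no longer guarantees a root $q<1$.

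The paper avoids this by never passing through $\|\myvec{x}_{k+1}-\myvec{x}^*\|$ for the leading term. It sets $\myvec{z}_k=\sum_{j}\alpha_j^{(k)}\myvec{x}_{k-m_k+j}$ and splits the new residual directly as
\begin{equation*}
\myvec{f}(\myvec{x}_{k+1})=\bigl[\myvec{g}(\myvec{x}_{k+1})-\myvec{g}(\myvec{z}_k)\bigr]+\bigl[\myvec{g}(\myvec{z}_k)-\myvec{x}_{k+1}\bigr].
\end{equation*}
The first bracket is controlled by contractivity alone: $\myvec{x}_{k+1}-\myvec{z}_k=\sum_j\alpha_j^{(k)}\myvec{f}(\myvec{x}_{k-m_k+j})$ is precisely the optimized residual combination, so its norm is $\eta_k\|\myvec{f}(\myvec{x}_k)\|$ and the bracket contributes exactly $\theta\eta_k\|\myvec{f}(\myvec{x}_k)\|$ with no Jacobian inversion and no condition number. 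Only the second bracket is Taylor-expanded at $\myvec{x}^*$ and estimated with the H\"older condition and Lemma \ref{lem:norm_f(x)}; there the factors $M_\alpha^{1+\nu}$ (from $\|\myvec{z}_k-\myvec{x}^*\|^{1+\nu}$) and $M_\alpha$ (from the weighted sum) produce the $M_\alpha(1+M_\alpha^{\nu})$ in $\zeta$. To repair your argument you would need to replace your error-based expansion by this residual-based splitting; the rest of your plan (induction, Lemma \ref{lem:q}, the bounding sequence, and the trapping argument via $\tfrac{2+\nu}{\nu}qM_\alpha\kappa\leq 1$) is consistent with the paper's proof.
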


\begin{proof}
  We proceed by induction. The assumption on the history that 
  $\|\myvec{f}(\myvec{x}_{\ell})\| \leq q^\ell\|\myvec{f}(\myvec{x}_0)\|$
  leads to \eqref{ineq:ConvRate} being satisfied for $0 \leq k \leq m$. 
  We now assume that $\myvec{x}_k \in \ball(\myvec{x}^*,r)$ 
  and \eqref{ineq:ConvRate} holds for all $0\leq \ell \leq k$ with $k > m$. Set
  $$
  \myvec{z}_k := \sum_{j=0}^{m_k} \alpha_j^{(k)} \myvec{x}_{k-m_k+j}.
  $$
  We first show that $\myvec{z}_k \in \ball(\myvec{x}^*,r)$. In fact, since
  $$
  \myvec{z}_k - \myvec{x}^* 
  = \sum_{j=0}^{m_k} \alpha_j^{(k)} \myvec{x}_{k-m_k+j} - \myvec{x}^*
  = \sum_{j=0}^{m_k} \alpha_j^{(k)} (\myvec{x}_{k-m_k+j} - \myvec{x}^*),
  $$
  we use the inductive hypothesis and \eqref{ineq:norm_f(x)} to yield 
  \begin{align*}
    \|\myvec{z}_k - \myvec{x}^*\|
    &\leq \sum_{j=0}^{m_k} |\alpha_j^{(k)}| \|\myvec{x}_{k-m_k+j} - \myvec{x}^*\| \\
    &\leq \frac{1+\nu}{\nu}\|\myvec{f}'(\myvec{x}^*)^{-1}\| \sum_{j=0}^{m_k} |\alpha_j^{(k)}| \|\myvec{f}(\myvec{x}_{k-m_k+j})\| \\
    &\leq \frac{1+\nu}{\nu}\|\myvec{f}'(\myvec{x}^*)^{-1}\| \sum_{j=0}^{m_k} |\alpha_j^{(k)}| \cdot q^{k-m_k+j}\|\myvec{f}(\myvec{x}_0)\|.
  \end{align*}
  In light of the fact that $k - m_k + j = k - \min\{m,k\} + j \geq k - m$,
  we further obtain that 
  \begin{align}
    \|\myvec{z}_k - \myvec{x}^*\|
    &\leq \frac{1+\nu}{\nu}\|\myvec{f}'(\myvec{x}^*)^{-1}\| \sum_{j=0}^{m_k} |\alpha_j^{(k)}| \cdot q^{k-m}\|\myvec{f}(\myvec{x}_0)\| \label{ineq:norm_zk-x*} \\
    &\leq \frac{2+\nu}{\nu}M_\alpha \kappa(\myvec{f}'(\myvec{x}^*)) q^{k-m} \|\myvec{x}_0 - \myvec{x}^*\| \nonumber\\
    &\leq  \frac{2+\nu}{\nu}q M_\alpha \kappa(\myvec{f}'(\myvec{x}^*)) \|\myvec{x}_0 - \myvec{x}^*\|
      \leq \|\myvec{x}_0 - \myvec{x}^*\|. \nonumber
  \end{align}
  This means that $\myvec{z}_k \in \ball(\myvec{x}^*,r)$.
  Next, by using the contractivity of the fixed-point function $\myvec{g}$, one has that
  \begin{align*}
    \|\myvec{x}_{k+1} - \myvec{x}^*\|
      &= \left\|\sum_{j=0}^{m_k} \alpha_j^{(k)} \big[\myvec{g}(\myvec{x}_{k-m_k+j}) - \myvec{g}(\myvec{x}^*)\big]\right\| \\
      &\leq \theta \sum_{j=0}^{m_k} |\alpha_j^{(k)}| \|\myvec{x}_{k-m_k+j} - \myvec{x}^*\| \\
      &< \sum_{j=0}^{m_k} |\alpha_j^{(k)}| \|\myvec{x}_{k-m_k+j} - \myvec{x}^*\| 
        \leq \|\myvec{x}_0 - \myvec{x}^*\|,
  \end{align*}
  which gives that $\myvec{x}_{k+1} \in \ball(\myvec{x}^*,r)$.
  To estimate the bound of $\|\myvec{f}(\myvec{x}_{k+1})\|$, we notice that
  \begin{equation}
    \label{eq:fk+1}
    \myvec{f}(\myvec{x}_{k+1}) = \myvec{g}(\myvec{x}_{k+1}) - \myvec{x}_{k+1}
    = \big[\myvec{g}(\myvec{x}_{k+1}) - \myvec{g}(\myvec{z}_k)\big]
      + \big[\myvec{g}(\myvec{z}_k) - \myvec{x}_{k+1}\big].
  \end{equation}
  On the one hand, we have from the contractivity of $\myvec{g}$ that
  \begin{align}
    \|\myvec{g}(\myvec{x}_{k+1}) - \myvec{g}(\myvec{z}_k)\|
      &\leq \theta \|\myvec{x}_{k+1} - \myvec{z}_k\| \nonumber\\
      &= \theta \left\|\sum_{j=0}^{m_k} \alpha_j^{(k)}[\myvec{g}(\myvec{x}_{k-m_k+j}) - \myvec{x}_{k-m_k+j}]\right\| \nonumber\\
      &= \theta \left\|\sum_{j=0}^{m_k} \alpha_j^{(k)} \myvec{f}(\myvec{x}_{k-m_k+j})\right\| \nonumber \\
      &= \theta\eta_k \|\myvec{f}(\myvec{x}_k)\|. \label{ineq:norm_g(xk+1)-g(z_k)}
  \end{align}
  On the other hand, since
  \begin{align*}
    \myvec{g}(\myvec{z}_k) 
      &= \myvec{g}(\myvec{z}_k) - \myvec{g}(\myvec{x}^*) + \myvec{g}(\myvec{x}^*) \\
      &= \myvec{g}(\myvec{x}^*) + \myvec{g}'(\myvec{x}^*) (\myvec{z}_k - \myvec{x}^*) 
        + \int_0^1 [\myvec{g}'(\myvec{x}^* + t(\myvec{z}_k - \myvec{x}^*)) - \myvec{g}'(\myvec{x}^*)](\myvec{z}_k - \myvec{x}^*)\dif{t} \\
      &= \sum_{j=0}^{m_k}\alpha_j^{(k)} 
      \big[\myvec{g}(\myvec{x}^*) + \myvec{g}'(\myvec{x}^*)(\myvec{x}_{k-m_k+j} - \myvec{x}^*)\big] \\
      &\quad + \int_0^1 [\myvec{g}'(\myvec{x}^* + t(\myvec{z}_k - \myvec{x}^*)) - \myvec{g}'(\myvec{x}^*)](\myvec{z}_k - \myvec{x}^*)\dif{t},
  \end{align*}
  we have
  \begin{align*}
    \myvec{g}(\myvec{z}_k) - \myvec{x}_{k+1}
      &= \sum_{j=0}^{m_k}\alpha_j^{(k)} 
        \big[\myvec{g}(\myvec{x}^*) + \myvec{g}'(\myvec{x}^*)(\myvec{x}_{k-m_k+j} - \myvec{x}^*)\
          - \myvec{g}(\myvec{x}_{k-m_k+j})\big] \\
      &\quad + \int_0^1 [\myvec{g}'(\myvec{x}^* + t(\myvec{z}_k - \myvec{x}^*)) 
          - \myvec{g}'(\myvec{x}^*)](\myvec{z}_k - \myvec{x}^*)\dif{t} \\
      &= - \sum_{j=0}^{m_k} \alpha_j^{(k)}
        \left(\int_0^1 \big[\myvec{g}'(\myvec{x}^* + t(\myvec{x}_{k-m_k+j} - \myvec{x}^*)) 
          - \myvec{g}'(\myvec{x}^*)\big](\myvec{x}_{k-m_k+j} - \myvec{x}^*)\dif{t}\right) \\
      &\quad + \int_0^1 [\myvec{g}'(\myvec{x}^* + t(\myvec{z}_k - \myvec{x}^*)) 
          - \myvec{g}'(\myvec{x}^*)](\myvec{z}_k - \myvec{x}^*)\dif{t}.
  \end{align*}
  Recall that if the nonlinear operator $\myvec{f}$ is H\"older continuous with exponent $\nu \in (0,1)$,
  then the corresponding fixed-point function $\myvec{g}$ inherits the same H\"older continuity with the same exponent $\nu$.
  Then, the H\"older condition \eqref{cond:Holder} is applicable to deduce that
  \begin{align*}
    \|\myvec{g}(\myvec{z}_k) - \myvec{x}_{k+1}\|
      &\leq H_\nu \sum_{j=0}^{m_k} |\alpha_j^{(k)}| \int_0^1 t^\nu \|\myvec{x}_{k-m_k+j} - \myvec{x}^*\|^{1+\nu}\dif{t} \\
      &\quad + H_\nu \int_0^1 t^\nu \|\myvec{z}_k - \myvec{x}^*\|^{1+\nu}\dif{t} \\
      &= \frac{H_\nu}{1+\nu} \left(\|\myvec{z}_k - \myvec{x}^*\|^{1+\nu}
        + \sum_{j=0}^{m_k} |\alpha_j^{(k)}| \|\myvec{x}_{k-m_k+j} - \myvec{x}^*\|^{1+\nu}\right).
  \end{align*}
  It follows from \eqref{ineq:norm_zk-x*} and \eqref{ineq:norm_f(x)} that
  \begin{align*}
    \lefteqn{\|\myvec{z}_k - \myvec{x}^*\|^{1+\nu}} \\
    &\leq \frac{(1+\nu)^{1+\nu}}{\nu^{1+\nu}} \left(q^{k-m}\right)^{1+\nu}
      M_\alpha^{1+\nu} \|\myvec{f}'(\myvec{x}^*)^{-1}\|^{1+\nu} \|\myvec{f}(\myvec{x}_0)\|^{1+\nu} \\
    &\leq \frac{(1+\nu)^{1+\nu}}{\nu^{1+\nu}} \left(q^{k-m}\right)^{1+\nu}
    M_\alpha^{1+\nu} \|\myvec{f}'(\myvec{x}^*)^{-1}\|^{1+\nu}
    \left(\frac{2+\nu}{1+\nu}\|\myvec{f}'(\myvec{x}^*)\|\|\myvec{x}_0 - \myvec{x}^*\|\right)^\nu\|\myvec{f}(\myvec{x}_0)\| \\
    &= \frac{(1+\nu)(2+\nu)^\nu}{\nu^{1+\nu}}q^{(1+\nu)(k-m)}M_\alpha^{1+\nu} 
    \|\myvec{f}'(\myvec{x}^*)^{-1}\|\kappa^\nu(\myvec{f}'(\myvec{x}^*))
    \|\myvec{x}_0 - \myvec{x}^*\|^\nu \|\myvec{f}(\myvec{x}_0)\|.
  \end{align*}
  In addition, by using the same argument as above, we have
  \begin{align*}
    \lefteqn{\sum_{j=0}^{m_k} |\alpha_j^{(k)}| \|\myvec{x}_{k-m_k+j} - \myvec{x}^*\|^{1+\nu} } \\
    &\leq \sum_{j=0}^{m_k} |\alpha_j^{(k)}| 
      \left(\frac{1+\nu}{\nu}\|\myvec{f}'(\myvec{x}^*)^{-1}\|\|\myvec{f}(\myvec{x}_{k-m_k+j})\|\right)^{1+\nu} \\
    &\leq \frac{(1+\nu)(2+\nu)^\nu}{\nu^{1+\nu}}q^{(1+\nu)(k-m)}M_\alpha
    \|\myvec{f}'(\myvec{x}^*)^{-1}\|\kappa^\nu(\myvec{f}'(\myvec{x}^*))
    \|\myvec{x}_0 - \myvec{x}^*\|^\nu \|\myvec{f}(\myvec{x}_0)\|.
  \end{align*}
  The above two estimates allow us to get
  \begin{align*}
    \|\myvec{g}(\myvec{z}_k) - \myvec{x}_{k+1}\|
    &\leq \frac{H_\nu M_\alpha(1+M_\alpha^\nu)(2+\nu)^\nu}{\nu^{1+\nu}}q^{(1+\nu)(k-m)}
    \kappa^\nu(\myvec{f}'(\myvec{x}^*))\|\myvec{f}'(\myvec{x}^*)^{-1}\|
    \|\myvec{x}_0 - \myvec{x}^*\|^\nu \|\myvec{f}(\myvec{x}_0)\|.
  \end{align*}
  This together with \eqref{eq:fk+1} and \eqref{ineq:norm_g(xk+1)-g(z_k)} leads to
  \begin{align*}
    \lefteqn{\|\myvec{f}(\myvec{x}_{k+1})\| } \\
    &\leq \|\myvec{g}(\myvec{x}_{k+1}) - \myvec{x}_{k+1}\| + \|\myvec{g}(\myvec{z}_k) - \myvec{x}_{k+1}\| \\
    &\leq q^k\|\myvec{f}(\myvec{x}_0)\|\left[\theta\eta_k 
    + \frac{H_\nu M_\alpha(1+M_\alpha^\nu)(2+\nu)^\nu}{\nu^{1+\nu}}q^{(1+\nu)(k-m)-k}
    \kappa^\nu(\myvec{f}'(\myvec{x}^*))\|\myvec{f}'(\myvec{x}^*)^{-1}\|
    \|\myvec{x}_0 - \myvec{x}^*\|^\nu\right] \\
    &\leq q^k\|\myvec{f}(\myvec{x}_0)\|\left[\theta\eta_k 
    + \frac{H_\nu M_\alpha(1+M_\alpha^\nu)(2+\nu)^\nu}{\nu^{1+\nu}}q^{-m}
    \kappa^\nu(\myvec{f}'(\myvec{x}^*))\|\myvec{f}'(\myvec{x}^*)^{-1}\|
    \|\myvec{x}_0 - \myvec{x}^*\|^\nu\right].
  \end{align*}
  Noting that $\tau = \theta\eta_k \in (0,1)$, 
  and that $\zeta$, as given in \eqref{cons:zeta}, satisfies $\zeta < 1 - \tau$,
  we conclude from Lemma \ref{lem:q} that
  $$
  \|\myvec{f}(\myvec{x}_{k+1})\|
  \leq (\tau + \zeta q^{-m})q^k \|\myvec{f}(\myvec{x}_0)\|
  = q \cdot q^k \|\myvec{f}(\myvec{x}_0)\| = q^{k+1} \|\myvec{f}(\myvec{x}_0)\|.
  $$
  Therefore, by induction, all claims in the theorem are verified. 
  The proof is complete.
\end{proof}

\begin{myremark}
  Theorem \ref{th:LocalConvAnderson(m)} shows that the sequence $\{\myvec{x}_k\}$ converges R-linearly to the solution $\myvec{x}^*$.
  The convergence rate is determined by the unique root $q$ of equation \eqref{eq:q} in the interval $\big(m\tau/(m+1),1\big)$.
  In particular, if $m=1$, then $q = (\tau + \sqrt{\tau^2 + 4\zeta})/2$.
  This means that the convergence rate is determined by 
  the contraction factor $\theta$, the optimization gain $\eta_k$ and the quantity $\zeta$.
\end{myremark}

\begin{myremark}
  The inequality $\zeta < 1 - \tau$ can generally be satisfied 
  when the initial value $\myvec{x}_0$ is taken sufficiently close to the solution $\myvec{x}^*$.
  An important observation is that if $\myvec{f}'(\myvec{x}^*)$ is well conditioned,
  then the convergence ball $\ball(\myvec{x}^*,r)$ tends to be large,
  making the condition $\zeta < 1 - \tau$ more attainable.
\end{myremark}

The R-linear convergence of the error with R-factor $q$ is a direct consequence of Theorem \ref{th:LocalConvAnderson(m)}.

\begin{mycorollary}
  \label{cor:RLinearConvAnderson(m)}
  Let the assumptions of Theorem $\ref{th:LocalConvAnderson(m)}$ hold.
  Then we have
  $$
  \limsup_{k \to \infty} \left(\frac{\|\myvec{x}_k - \myvec{x}^*\|}{\|\myvec{x}_0 - \myvec{x}^*\|}\right)^{1/k} \leq q.
  $$
\end{mycorollary}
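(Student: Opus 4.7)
The plan is to deduce the error-based R-linear convergence bound directly from the residual-based R-linear bound established in Theorem \ref{th:LocalConvAnderson(m)}, by invoking the two-sided norm equivalence between $\|\myvec{f}(\myvec{x})\|$ and $\|\myvec{x}-\myvec{x}^*\|$ provided by Lemma \ref{lem:norm_f(x)}. In effect, this is a quick transcription from the residual scale to the error scale, made possible by the fact that, in $\ball(\myvec{x}^*,r_\nu)$, $\myvec{f}$ behaves like a bi-Lipschitz map between the two quantities.

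First I would recall that since $\{\myvec{x}_k\}\subset\ball(\myvec{x}^*,r)\subset\ball(\myvec{x}^*,r_\nu)$ by Theorem \ref{th:LocalConvAnderson(m)}, and $\myvec{x}_0\in\ball(\myvec{x}^*,r_\nu)$ as well, the inequality \eqref{ineq:norm_f(x)/f(x0)} applies at every iterate $\myvec{x}_k$. The relevant half is the left inequality, which rearranges to
\begin{equation*}
  \frac{\|\myvec{x}_k-\myvec{x}^*\|}{\|\myvec{x}_0-\myvec{x}^*\|}
    \;\leq\; \frac{(2+\nu)\kappa(\myvec{f}'(\myvec{x}^*))}{\nu}\cdot\frac{\|\myvec{f}(\myvec{x}_k)\|}{\|\myvec{f}(\myvec{x}_0)\|}.
\end{equation*}
Next I would combine this with the residual estimate $\|\myvec{f}(\myvec{x}_k)\|\leq q^k\|\myvec{f}(\myvec{x}_0)\|$ established inductively in the proof of Theorem \ref{th:LocalConvAnderson(m)} (which is exactly the content of \eqref{ineq:ConvRate}, once the evident typo in the numerator is corrected). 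This yields
\begin{equation*}
  \frac{\|\myvec{x}_k-\myvec{x}^*\|}{\|\myvec{x}_0-\myvec{x}^*\|}
    \;\leq\; C\, q^k,\qquad
  C := \frac{(2+\nu)\kappa(\myvec{f}'(\myvec{x}^*))}{\nu}.
\end{equation*}

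Finally I would take the $k$-th root of both sides and pass to the limit superior. Since $C$ is a constant independent of $k$, we have $C^{1/k}\to 1$, so the limsup is bounded by $q$, as claimed. There is no real obstacle here: the entire argument is essentially a one-line application of the bi-Lipschitz equivalence from Lemma \ref{lem:norm_f(x)} together with the already-proven R-linear bound on the residuals, and the only thing to notice is that the multiplicative constant gets absorbed under the $k$-th root.
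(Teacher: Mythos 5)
Your proposal is correct and follows essentially the same route as the paper's own proof: both apply the left-hand inequality of \eqref{ineq:norm_f(x)/f(x0)} to pass from errors to residuals, invoke the residual bound from Theorem \ref{th:LocalConvAnderson(m)}, and absorb the constant under the $k$-th root in the limit superior. No gaps.
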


\begin{proof}
  Noting from \eqref{ineq:norm_f(x)/f(x0)} that
  $$
  \frac{\|\myvec{x}_k - \myvec{x}^*\|}{\|\myvec{x}_0 - \myvec{x}^*\|}
    \leq  \frac{(2+\nu)\kappa(\myvec{f}'(\myvec{x}^*))}{\nu} 
    \cdot \frac{\|\myvec{f}(\myvec{x}_k)\|}{\|\myvec{f}(\myvec{x}_0)\|}.
  $$
  This allows us to obtain from \eqref{ineq:ConvRate} that
  $$
  \limsup_{k \to \infty} \left(\frac{\|\myvec{x}_k - \myvec{x}^*\|}{\|\myvec{x}_0 - \myvec{x}^*\|}\right)^{1/k}
  \leq \lim_{k \to \infty} \left[\frac{2+\nu}{\nu}\kappa(\myvec{f}'(\myvec{x}^*))\right]^{1/k}
    \limsup_{k \to \infty} \left(\frac{\|\myvec{f}(\myvec{x}_k)\|}{\|\myvec{f}(\myvec{x}_0)\|}\right)^{1/k} 
  \leq q,
  $$
  which yields the desired result.
\end{proof}

For the case $\nu = 1$, 
the H\"older continuity assumption on the Jacobian $\myvec{f}'$ reduces to the classical Lipschitz continuity:
\begin{equation}
  \label{cond:Lipschitz}
  \|\myvec{f}'(\myvec{x}) - \myvec{f}'(\myvec{y})\| \leq L\|\myvec{x} - \myvec{y}\|, 
    \quad \myvec{x}, \myvec{y} \in \ball(\myvec{x}^*,r),
\end{equation}
where
\begin{equation}
  \label{cons:r(m=1)}
  r = \min\left\{\frac{1}{L\|\myvec{f}'(\myvec{x}^*)^{-1}\|}, 
    \frac{1-\theta}{3LM_\alpha(1+M_a)\|\myvec{f}'(\myvec{x}^*)^{-1}\|\kappa(\myvec{f}'(\myvec{x}^*))}\right\}.
\end{equation}
Then we have the following local convergence result from 
Theorem \ref{th:LocalConvAnderson(m)} and Corollary \ref{cor:RLinearConvAnderson(m)}
for Anderson acceleration under Lipschitz condition \eqref{cond:Lipschitz}.

\begin{mycorollary}
  \label{cor:LocalConvLipCondAnderson(m)}
  Assume that there is an $\myvec{x}^* \in \RS^n$ such that $\myvec{f}(\myvec{x}^*) = \myvec{0}$
  and $\myvec{f}'(\myvec{x}^*)$ is nonsingular. Suppose that:
  \begin{enumerate}[label=\textup{(\roman*)}]
    \item There is a constant $M_\alpha$ such that $\mysum_{j=0}^{m_k} |\alpha_j^{(k)}| \leq M_\alpha$ for all $k \geq 0$.
    \item The Jacobian $\myvec{f}'$ satisfies the Lipschitz continuous \eqref{cond:Lipschitz} in $\ball(\myvec{x}^*,r)$,
    where $r$ is given in \eqref{cons:r(m=1)}.
    \item The fixed-point function $\myvec{g}$ is contractive with constant $\theta \in (0,1)$ in $\ball(\myvec{x}^*,r)$.
  \end{enumerate}
  Let $\{\myvec{x}_k\}$ be the sequence generated by Algorithm $\ref{alg:AA}$
  with starting point $\myvec{x}_0 \in \ball(\myvec{x}^*,r)$.
  Set $\tau := \theta\eta_k$ and 
  $\zeta := 3L M_\alpha (1 + M_\alpha)
      \kappa(\myvec{f}'(\myvec{x}^*)) \|\myvec{f}'(\myvec{x}^*)^{-1}\|\|\myvec{x}_0 - \myvec{x}^*\|$,
  where $\eta_k$ is the optimization gain defined by \eqref{cons:eta_k}.
  If $\zeta < 1 - \tau$ and $qM_\alpha \kappa(\myvec{f}'(\myvec{x}^*)) \leq 1/3$,
  where $q$ is the unique root of equation \eqref{eq:q} in the interval $\big(m\tau/(m+1),1\big)$,
  then the sequence $\{\myvec{x}_k\}$ is contained in $\ball(\myvec{x}^*,r)$ 
  and converges R-linearly to the solution $\myvec{x}^*$
  in the sense that
  \begin{equation*}
    \limsup_{k \to \infty} \left(\frac{\|\myvec{f}(\myvec{x}_{k})\|}{\|\myvec{f}(\myvec{x}_0)\|}\right)^{1/k}
      \leq q \quad \text{and} \quad
    \limsup_{k \to \infty} \left(\frac{\|\myvec{x}_k - \myvec{x}^*\|}{\|\myvec{x}_0 - \myvec{x}^*\|}\right)^{1/k}
    \leq q.
  \end{equation*}
\end{mycorollary}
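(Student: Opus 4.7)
The plan is to obtain Corollary \ref{cor:LocalConvLipCondAnderson(m)} as a direct specialization of Theorem \ref{th:LocalConvAnderson(m)} and Corollary \ref{cor:RLinearConvAnderson(m)} to the exponent $\nu = 1$, in which case the H\"older condition \eqref{cond:Holder} with $H_1 = L$ coincides verbatim with the Lipschitz condition \eqref{cond:Lipschitz}. Since the corollary's hypothesis \textup{(ii)} is exactly the instance $\nu = 1$ of Theorem \ref{th:LocalConvAnderson(m)}\textup{(ii)}, the entire machinery of the preceding theorem becomes available once I check that each scalar quantity specializes correctly.

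The key verifications I would carry out, in order, are the following. First, with $\nu = 1$ and $H_1 = L$, the definition \eqref{cons:r_nu} yields $r_1 = 1/(L\|\myvec{f}'(\myvec{x}^*)^{-1}\|)$, matching the first entry of the minimum in \eqref{cons:r(m=1)}. Second, substituting $\nu = 1$ into \eqref{cons:hat_r_nu} gives
$$
\hat{r}_1
= \frac{1-\theta}{M_\alpha(1+M_\alpha)} \cdot \frac{r_1}{3\kappa(\myvec{f}'(\myvec{x}^*))}
= \frac{1-\theta}{3LM_\alpha(1+M_\alpha)\|\myvec{f}'(\myvec{x}^*)^{-1}\|\kappa(\myvec{f}'(\myvec{x}^*))},
$$
which is exactly the second entry of the minimum in \eqref{cons:r(m=1)}; hence $r = \min\{r_1,\hat{r}_1\}$ agrees with the radius in the corollary. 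Third, inserting $\nu = 1$ into \eqref{cons:zeta} produces
$$
\zeta = \frac{(2+1)^1 L M_\alpha(1+M_\alpha)}{1^{2}}\,\kappa(\myvec{f}'(\myvec{x}^*))\|\myvec{f}'(\myvec{x}^*)^{-1}\|\|\myvec{x}_0-\myvec{x}^*\|,
$$
which is precisely the $\zeta$ stated in Corollary \ref{cor:LocalConvLipCondAnderson(m)}. Finally, the threshold $(2+\nu)\,qM_\alpha\kappa(\myvec{f}'(\myvec{x}^*))/\nu \leq 1$ collapses, at $\nu = 1$, to the assumed inequality $qM_\alpha\kappa(\myvec{f}'(\myvec{x}^*)) \leq 1/3$.

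With these identifications, the hypotheses of Theorem \ref{th:LocalConvAnderson(m)} are fulfilled verbatim, so its conclusion delivers $\{\myvec{x}_k\} \subset \ball(\myvec{x}^*,r)$ together with the residual-based R-linear bound $\limsup_{k\to\infty} (\|\myvec{f}(\myvec{x}_k)\|/\|\myvec{f}(\myvec{x}_0)\|)^{1/k} \leq q$. Applying Corollary \ref{cor:RLinearConvAnderson(m)} under the same hypotheses then transfers this rate to the error sequence $\{\|\myvec{x}_k - \myvec{x}^*\|\}$, yielding the second bound in the corollary. There is no genuine analytical obstacle here; the only thing to watch for is bookkeeping, namely that the factor $3$ in the formulas for $r$ and $\zeta$, and the factor $1/3$ in the bound on $qM_\alpha\kappa$, all trace back to $(2+\nu)/\nu$ at $\nu = 1$, and that $(1+M_\alpha^\nu)$ reduces to $(1+M_\alpha)$.
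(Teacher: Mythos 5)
Your proposal is correct and follows exactly the route the paper intends: the paper states this corollary as a direct specialization of Theorem \ref{th:LocalConvAnderson(m)} and Corollary \ref{cor:RLinearConvAnderson(m)} to $\nu = 1$ with $H_1 = L$, and your verifications that $r_1$, $\hat{r}_1$, $\zeta$, and the threshold $(2+\nu)qM_\alpha\kappa(\myvec{f}'(\myvec{x}^*))/\nu \leq 1$ all reduce to the stated quantities are the only bookkeeping required.
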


\begin{myremark}
  Toth and Kelley \cite{TothK2015} showed that Anderson acceleration with depth $m$
  converges R-linearly with an R-factor in $(\theta,1)$, 
  provided $\myvec{g}$ is Lipschitz continuously differentiable.
  Later, Chen and Kelley \cite{ChenK2019} relaxed this assumption and proved 
  convergence with R-factor $\theta^{1/(m+1)}$.
  Our work reveals that the R-factor depends not only on the contraction factor $\theta$,
  but also on the optimization gain $\eta_k$ and the condition number $\kappa(\myvec{f}'(\myvec{x}^*))$,
  leading to a more refined characterization of convergence than previous results 
  based only on the contraction factor $\theta$.
  In the special case $m=1$, 
  the R-factor can be explicitly expressed as $q = (\tau + \sqrt{\tau^2 + 4\zeta})/2$.
\end{myremark}

%------------------------------------------------------------------------------
\subsection{Convergence rate for depth $m=1$}

In this subsection, we obtain the convergence rate of Anderson acceleration for depth $m=1$ with the $\ell^2$ norm.
For the depth $m = 1$, the optimization problem \eqref{optim:alpha} becomes
$$
\alpha_k = \arg\min_{\alpha \in \RS} 
\left\|(1-\alpha)\myvec{f}(\myvec{x}_k) + \alpha \myvec{f}(\myvec{x}_{k-1})\right\|,
$$
and admits a closed-form solution
\begin{equation}
  \label{cons:alpha_k(m=1)}
  \alpha_k = \frac{\mytran{\myvec{f}(\myvec{x}_k)}(\myvec{f}(\myvec{x}_k) - \myvec{f}(\myvec{x}_{k-1}))}
  {\|\myvec{f}(\myvec{x}_k) - \myvec{f}(\myvec{x}_{k-1})\|^2}.
\end{equation}
Moreover, we have
$$
\myvec{x}_{k+1} = (1-\alpha_k)\myvec{g}(\myvec{x}_k) + \alpha_k \myvec{g}(\myvec{x}_{k-1}).
$$
The optimization gain $\eta_k$ given by \eqref{cons:eta_k} now becomes
\begin{equation}
  \label{cons:eta_k(m=1)}
  \eta_k = \frac{\|(1-\alpha_k)\myvec{f}(\myvec{x}_k) + \alpha_k \myvec{f}(\myvec{x}_{k-1})\|}{\|\myvec{f}(\myvec{x}_k)\|}.
\end{equation}
By combining \eqref{cons:alpha_k(m=1)} and \eqref{cons:eta_k(m=1)}, 
we arrive at 
\begin{equation}
  \label{eq:norm_f(xk+1)-f(xk)}
  |\alpha_k| \|\myvec{f}(\myvec{x}_k) - \myvec{f}(\myvec{x}_{k-1})\|_2
    = \sqrt{1 - \eta_k^2} \|\myvec{f}(\myvec{x}_k)\|.
\end{equation}
Details of the derivation can be found in \cite[p. 2848]{PollockR2021}.
As a consequence, we obtain two useful inequalities that provide an upper bound 
on the difference between consecutive iterations using the residual $\myvec{f}(\myvec{x}_k)$.

\begin{mylemma}
  \label{lem:norm_xk+1-xk}  
  Let $m = 1$ in Algorithm $\ref{alg:AA}$. For any $k \geq 1$, we have
  \begin{align}
    \|\myvec{x}_{k} - \myvec{x}_{k-1}\| 
      &\leq \frac{\sqrt{1 - \eta_k^2}}{|\alpha_k|(1 - \theta)} \|\myvec{f}(\myvec{x}_k)\|, \label{ineq:norm_xk-xk-1_m=1} \\
    \|\myvec{x}_{k+1} - \myvec{x}_k\|
      &\leq \left(1 + \frac{\theta}{1-\theta}\sqrt{1-\eta_k^2}\right)\|\myvec{f}(\myvec{x}_k)\|, \label{ineq:norm_xk+1-xk_m=1}
  \end{align}
  where $\alpha_k$ is given by \eqref{cons:alpha_k(m=1)},
  $\eta_k$ is given by \eqref{cons:eta_k(m=1)},
  and $\theta$ is the contraction factor.
\end{mylemma}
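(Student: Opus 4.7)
The plan is to derive both inequalities from two basic ingredients: the identity \eqref{eq:norm_f(xk+1)-f(xk)} that is already on hand, and the contractivity \eqref{cond:Contractive} applied to the differences $\myvec{g}(\myvec{x}_k)-\myvec{g}(\myvec{x}_{k-1})$. No convergence assumption or H\"older structure is needed here; the lemma is essentially an algebraic consequence of how the $m=1$ iterate is built plus the bound on the minimizing coefficient.

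For the first inequality, I would begin by observing, using $\myvec{f}=\myvec{g}-\myvec{x}$, that
$$
\myvec{f}(\myvec{x}_k) - \myvec{f}(\myvec{x}_{k-1})
= \bigl[\myvec{g}(\myvec{x}_k) - \myvec{g}(\myvec{x}_{k-1})\bigr] - (\myvec{x}_k - \myvec{x}_{k-1}).
$$
Taking norms and applying the reverse triangle inequality together with the contraction estimate $\|\myvec{g}(\myvec{x}_k)-\myvec{g}(\myvec{x}_{k-1})\|\leq \theta\|\myvec{x}_k-\myvec{x}_{k-1}\|$ gives $(1-\theta)\|\myvec{x}_k-\myvec{x}_{k-1}\|\leq \|\myvec{f}(\myvec{x}_k)-\myvec{f}(\myvec{x}_{k-1})\|$. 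Multiplying both sides by $|\alpha_k|$ and invoking identity \eqref{eq:norm_f(xk+1)-f(xk)} to replace the right-hand side by $\sqrt{1-\eta_k^2}\,\|\myvec{f}(\myvec{x}_k)\|$ yields \eqref{ineq:norm_xk-xk-1_m=1} after dividing by $|\alpha_k|(1-\theta)$.

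For the second inequality, I would start from the update $\myvec{x}_{k+1}=(1-\alpha_k)\myvec{g}(\myvec{x}_k)+\alpha_k\myvec{g}(\myvec{x}_{k-1})$ and rewrite
$$
\myvec{x}_{k+1} - \myvec{x}_k
= \bigl[\myvec{g}(\myvec{x}_k) - \myvec{x}_k\bigr] + \alpha_k\bigl[\myvec{g}(\myvec{x}_{k-1}) - \myvec{g}(\myvec{x}_k)\bigr]
= \myvec{f}(\myvec{x}_k) + \alpha_k\bigl[\myvec{g}(\myvec{x}_{k-1}) - \myvec{g}(\myvec{x}_k)\bigr].
$$
The triangle inequality and contractivity then give $\|\myvec{x}_{k+1}-\myvec{x}_k\|\leq \|\myvec{f}(\myvec{x}_k)\|+\theta|\alpha_k|\|\myvec{x}_k-\myvec{x}_{k-1}\|$, and substituting the bound from \eqref{ineq:norm_xk-xk-1_m=1} cancels the $|\alpha_k|$ factor and produces precisely \eqref{ineq:norm_xk+1-xk_m=1}. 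The only mild subtlety is that the first inequality only makes sense when $\alpha_k\neq 0$; if $\alpha_k=0$ then \eqref{eq:norm_f(xk+1)-f(xk)} forces $\eta_k=1$ and both claims reduce to $\|\myvec{x}_{k+1}-\myvec{x}_k\|\leq\|\myvec{f}(\myvec{x}_k)\|$, which follows directly from the update rule, so the degenerate case needs only a brief aside. Beyond that bookkeeping, the argument is routine, and I do not anticipate any real obstacle.
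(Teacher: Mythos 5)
Your argument is correct and follows essentially the same route as the paper: the reverse triangle inequality plus contractivity to get $(1-\theta)\|\myvec{x}_k-\myvec{x}_{k-1}\|\leq\|\myvec{f}(\myvec{x}_k)-\myvec{f}(\myvec{x}_{k-1})\|$ combined with \eqref{eq:norm_f(xk+1)-f(xk)} for the first bound, and the decomposition $\myvec{x}_{k+1}-\myvec{x}_k=\myvec{f}(\myvec{x}_k)+\alpha_k[\myvec{g}(\myvec{x}_{k-1})-\myvec{g}(\myvec{x}_k)]$ for the second. Your aside on the degenerate case $\alpha_k=0$ is a small point the paper leaves implicit, but it does not change the substance.
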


\begin{proof}
  By the contractivity of the fixed-point function $\myvec{g}$, we have
  \begin{align*}
    \|\myvec{f}(\myvec{x}_k) - \myvec{f}(\myvec{x}_{k-1})\| 
      &= \|(\myvec{g}(\myvec{x}_k) - \myvec{x}_k) - (\myvec{g}(\myvec{x}_{k-1}) - \myvec{x}_{k-1})\| \\
      &\geq \|\myvec{x}_k - \myvec{x}_{k-1}\| - \|\myvec{g}(\myvec{x}_k) - \myvec{g}(\myvec{x}_{k-1})\| \\
      &\geq (1 - \theta)\|\myvec{x}_k - \myvec{x}_{k-1}\|.
  \end{align*}
  Combining this with \eqref{eq:norm_f(xk+1)-f(xk)} yields the inequality \eqref{ineq:norm_xk-xk-1_m=1}.
  On the other hand, we observe that
  \begin{align*}
    \|\myvec{x}_{k+1} - \myvec{x}_k\| 
      &= \left\|[(1-\alpha_k)\myvec{g}(\myvec{x}_k) + \alpha_k \myvec{g}(\myvec{x}_{k-1})] - \myvec{x}_k\right\| \\
      &\leq \|\myvec{g}(\myvec{x}_k) - \myvec{x}_k\| + |\alpha_k| \|\myvec{g}(\myvec{x}_{k}) - \myvec{g}(\myvec{x}_{k-1})\| \\
      &\leq \|\myvec{f}(\myvec{x}_k)\| + \theta |\alpha_k| \|\myvec{x}_k - \myvec{x}_{k-1}\|,
  \end{align*}
  which, together with \eqref{ineq:norm_xk-xk-1_m=1}, gives \eqref{ineq:norm_xk+1-xk_m=1}.
\end{proof}

The following theorem provides a refined bound for the residual $\myvec{f}(\myvec{x}_{k+1})$
in terms of the residual $\myvec{f}(\myvec{x}_k)$ and the optimization gain $\eta_k$.

\begin{mytheorem}
  \label{th:LocalConvAnderson(m=1)}
  Let the assumptions of Theorem $\ref{th:LocalConvAnderson(m)}$ hold.
  If $m = 1$, then we have the following bound for the residual $\myvec{f}(\myvec{x}_{k+1})$:
  \begin{equation}
    \label{ineq:norm_f(xk+1)_m=1}
    \begin{aligned}
    \|\myvec{f}(\myvec{x}_{k+1})\| 
    &\leq \theta\left(1 + \frac{1+\theta}{1-\theta}\sqrt{1-\eta_k^2}\right)\|\myvec{f}(\myvec{x}_k)\| \\
    &\quad  + \frac{H_\nu}{1+\nu}\left[
        \left(1+\frac{\theta}{1-\theta}\sqrt{1-\eta_k^2}\right)^{1+\nu}
        + \left(\frac{\sqrt{1-\eta^2_k}}{1-\theta}\right)^{1+\nu}\frac{1}{|\alpha_k|^\nu}
    \right]\|\myvec{f}(\myvec{x}_k)\|^{1+\nu},
    \end{aligned}
  \end{equation}
\end{mytheorem}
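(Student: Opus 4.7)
The plan is to exploit the $m=1$ Anderson update rule $\myvec{x}_{k+1} = (1-\alpha_k)\myvec{g}(\myvec{x}_k) + \alpha_k\myvec{g}(\myvec{x}_{k-1})$ directly. Writing $\myvec{f}(\myvec{x}_{k+1}) = \myvec{g}(\myvec{x}_{k+1}) - \myvec{x}_{k+1}$ and adding and subtracting $\myvec{g}(\myvec{x}_k)$ yields the telescoping decomposition
\[
\myvec{f}(\myvec{x}_{k+1}) = [\myvec{g}(\myvec{x}_{k+1}) - \myvec{g}(\myvec{x}_k)] + \alpha_k[\myvec{g}(\myvec{x}_k) - \myvec{g}(\myvec{x}_{k-1})].
\]
The triangle inequality then reduces the task to bounding two $\myvec{g}$-increments in a form that cleanly separates a linear contraction contribution from a higher-order H\"older contribution.

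For each increment $\myvec{g}(\myvec{y}) - \myvec{g}(\myvec{z})$, I would use the integral Taylor representation expanded around $\myvec{z}$,
\[
\myvec{g}(\myvec{y}) - \myvec{g}(\myvec{z}) = \myvec{g}'(\myvec{z})(\myvec{y}-\myvec{z}) + \int_0^1 [\myvec{g}'(\myvec{z} + s(\myvec{y}-\myvec{z})) - \myvec{g}'(\myvec{z})](\myvec{y}-\myvec{z})\dif{s},
\]
estimating the linear term by the pointwise contraction bound $\|\myvec{g}'(\myvec{z})\| \leq \theta$ from \eqref{cond:Contractive2} and the remainder using the H\"older continuity of $\myvec{g}'$ (which inherits exponent $\nu$ and constant $H_\nu$ from $\myvec{f}'$, as recorded in Section \ref{sec:Preliminaries}) together with $\int_0^1 s^\nu \dif{s} = 1/(1+\nu)$. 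This delivers
\[
\|\myvec{g}(\myvec{y}) - \myvec{g}(\myvec{z})\| \leq \theta\|\myvec{y}-\myvec{z}\| + \frac{H_\nu}{1+\nu}\|\myvec{y}-\myvec{z}\|^{1+\nu}.
\]
Applying this to both increments and summing with weight $|\alpha_k|$ on the second produces an intermediate estimate for $\|\myvec{f}(\myvec{x}_{k+1})\|$ entirely in terms of $\|\myvec{x}_{k+1}-\myvec{x}_k\|$, $\|\myvec{x}_k-\myvec{x}_{k-1}\|$, and their $(1+\nu)$-powers.

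The final step is to convert these distances back to $\|\myvec{f}(\myvec{x}_k)\|$ via Lemma \ref{lem:norm_xk+1-xk}. The linear part collapses using the identity
\[
\frac{\theta}{1-\theta}\sqrt{1-\eta_k^2} + \frac{\sqrt{1-\eta_k^2}}{1-\theta} = \frac{(1+\theta)\sqrt{1-\eta_k^2}}{1-\theta},
\]
yielding exactly the claimed coefficient $\theta\bigl(1+\tfrac{1+\theta}{1-\theta}\sqrt{1-\eta_k^2}\bigr)$ on $\|\myvec{f}(\myvec{x}_k)\|$. For the H\"older part, the main bookkeeping is that raising the bound on $\|\myvec{x}_k-\myvec{x}_{k-1}\|$ to the power $1+\nu$ produces a factor $|\alpha_k|^{-(1+\nu)}$, which, combined with the explicit $|\alpha_k|$ weight, gives the $|\alpha_k|^{-\nu}$ factor in the statement and naturally packages $(1-\eta_k^2)^{(1+\nu)/2}$ as $(\sqrt{1-\eta_k^2})^{1+\nu}$. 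The hard part will be this power-counting on $|\alpha_k|$ and $(1-\eta_k^2)$ and keeping the two linear contributions in the exact algebraic form required by the statement; once those are handled correctly, the bound falls into place from ingredients already established.
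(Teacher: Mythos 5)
Your proposal is correct and follows essentially the same route as the paper: the same decomposition $\myvec{f}(\myvec{x}_{k+1}) = [\myvec{g}(\myvec{x}_{k+1}) - \myvec{g}(\myvec{x}_k)] + \alpha_k[\myvec{g}(\myvec{x}_k) - \myvec{g}(\myvec{x}_{k-1})]$, the same Taylor-with-integral-remainder treatment of each increment (linear part bounded by $\theta$, remainder by H\"older continuity of $\myvec{g}'$), and the same conversion to $\|\myvec{f}(\myvec{x}_k)\|$ via Lemma \ref{lem:norm_xk+1-xk}. The power counting you flag as the delicate step ($|\alpha_k|\cdot|\alpha_k|^{-(1+\nu)} = |\alpha_k|^{-\nu}$ and the collapse of the two linear contributions to the $(1+\theta)/(1-\theta)$ coefficient) works out exactly as you describe and matches the paper's computation.
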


\begin{proof}
  Since
  \begin{align*}
    \myvec{f}(\myvec{x}_{k+1})
      &= \myvec{g}(\myvec{x}_{k+1}) - \myvec{x}_{k+1} \\
      &= [\myvec{g}(\myvec{x}_{k+1}) - \myvec{g}(\myvec{x}_k)]
        + \alpha_k [\myvec{g}(\myvec{x}_{k}) - \myvec{g}(\myvec{x}_{k-1})] \\
      &= \int_0^1 \myvec{g}'(\myvec{x}_k + t(\myvec{x}_{k+1} - \myvec{x}_k))(\myvec{x}_{k+1} - \myvec{x}_k)\dif{t} \\
      &\quad + \alpha_k \int_0^1 \myvec{g}'(\myvec{x}_{k-1} + t(\myvec{x}_{k} - \myvec{x}_{k-1}))(\myvec{x}_{k} - \myvec{x}_{k-1})\dif{t},
  \end{align*}
  we get 
  \begin{align*}
    \lefteqn{\myvec{f}(\myvec{x}_{k+1}) - \myvec{g}'(\myvec{x}_k)(\myvec{x}_{k+1} - \myvec{x}_k)
      - \alpha_k \myvec{g}'(\myvec{x}_{k-1})(\myvec{x}_{k} - \myvec{x}_{k-1}) } \\
    &= \int_0^1 \left[\myvec{g}'(\myvec{x}_k + t(\myvec{x}_{k+1} - \myvec{x}_k)) 
      - \myvec{g}'(\myvec{x}_k)\right](\myvec{x}_{k+1} - \myvec{x}_k)\dif{t} \\
    &\quad + \alpha_k \int_0^1 \left[\myvec{g}'(\myvec{x}_{k-1} + t(\myvec{x}_{k} - \myvec{x}_{k-1}))
      - \myvec{g}'(\myvec{x}_{k-1})\right](\myvec{x}_{k} - \myvec{x}_{k-1})\dif{t}.
  \end{align*}
  By the H\"older condition \eqref{cond:Holder}, we have
  \begin{align*}
    \lefteqn{\|\myvec{f}(\myvec{x}_{k+1}) - \myvec{g}'(\myvec{x}_k)(\myvec{x}_{k+1} - \myvec{x}_k)
      - \alpha_k \myvec{g}'(\myvec{x}_{k-1})(\myvec{x}_{k} - \myvec{x}_{k-1})\| } \\
    &\leq H_\nu \int_0^1 t^\nu \|\myvec{x}_{k+1} - \myvec{x}_k\|^{1+\nu}\dif{t} 
      + H_\nu |\alpha_k| \int_0^1 t^\nu \|\myvec{x}_{k} - \myvec{x}_{k-1}\|^{1+\nu}\dif{t} \\
    &= \frac{H_\nu}{1+\nu} \left(\|\myvec{x}_{k+1} - \myvec{x}_k\|^{1+\nu}
      + |\alpha_k| \|\myvec{x}_{k} - \myvec{x}_{k-1}\|^{1+\nu}\right).
  \end{align*}
  Then we can use \eqref{ineq:norm_xk-xk-1_m=1} and \eqref{ineq:norm_xk+1-xk_m=1} to obtain
  \begin{align}
    \lefteqn{\|\myvec{f}(\myvec{x}_{k+1}) - \myvec{g}'(\myvec{x}_k)(\myvec{x}_{k+1} - \myvec{x}_k)
      - \alpha_k \myvec{g}'(\myvec{x}_{k-1})(\myvec{x}_{k} - \myvec{x}_{k-1})\| } \nonumber \\
    &\leq \frac{H_\nu}{1+\nu} \left[\left(1 + \frac{\theta}{1-\theta}\sqrt{1-\eta_k^2}\right)^{1+\nu}
      + \left(\frac{\sqrt{1-\eta^2_k}}{1-\theta}\right)^{1+\nu} \frac{1}{|\alpha_k|^{\nu}}\right]
      \|\myvec{f}(\myvec{x}_k)\|^{1+\nu}. \label{ineq:norm_f(xk+1)-g'(xk)(xk+1-xk)}
  \end{align}
  Recall that the contractivity of $\myvec{g}$ implies that 
  $$
  \|\myvec{g}'(\myvec{x}_k)\| \leq \theta
  \quad \text{and}\quad 
  \|\myvec{g}'(\myvec{x}_{k-1})\| \leq \theta.
  $$
  This together with  \eqref{ineq:norm_xk-xk-1_m=1} and \eqref{ineq:norm_xk+1-xk_m=1} permits us to get
  \begin{align}
    \lefteqn{\|\myvec{g}'(\myvec{x}_k)(\myvec{x}_{k+1} - \myvec{x}_k)
    + \alpha_k \myvec{g}'(\myvec{x}_{k-1})(\myvec{x}_{k} - \myvec{x}_{k-1})\| } \nonumber \\
    &\leq \theta \|\myvec{x}_{k+1} - \myvec{x}_k\| + \theta |\alpha_k| \|\myvec{x}_{k} - \myvec{x}_{k-1}\| \nonumber \\
    &\leq \theta \left(1 + \frac{\theta}{1-\theta}\sqrt{1-\eta_k^2}\right)\|\myvec{f}(\myvec{x}_k)\| 
      + \frac{\theta}{1-\theta}\sqrt{1-\eta_k^2}\|\myvec{f}(\myvec{x}_k)\| \nonumber \\
    &= \theta\left(1 + \frac{1+\theta}{1-\theta}\sqrt{1-\eta_k^2}\right)\|\myvec{f}(\myvec{x}_k)\|. \label{ineq:norm_g'(xk)(xk+1-xk)+alpha_k g'(xk-1)(xk-xk-1)}
  \end{align}
  Combining \eqref{ineq:norm_f(xk+1)-g'(xk)(xk+1-xk)} 
  and \eqref{ineq:norm_g'(xk)(xk+1-xk)+alpha_k g'(xk-1)(xk-xk-1)} 
  yields \eqref{ineq:norm_f(xk+1)_m=1}.
  The proof is complete.
\end{proof}

\begin{myremark}
  Theorem \ref{th:LocalConvAnderson(m=1)} shows that a smaller optimization gain $\eta_k$ leads to
  greater impact from higher-order term.
\end{myremark}

%------------------------------------------------------------------------------
\section{Application to algebraic Riccati equation}
\label{sec:Application}

In this section, we explore Anderson acceleration to solve a special nonlinear equation 
which is obtained by a NARE arising from neutron transport theory.
We begin with some new definitions and notations.
For any real matrices $A = (a_{ij})_{m \times n}$ and $B = (b_{ij})_{m \times n}$,
we write $A \geq B$ (respectively, $A > B$) if $a_{ij} \geq b_{ij}$ 
(respectively, $a_{ij} > b_{ij}$) for all $i = 1,2,\ldots,m$ and $j = 1,2,\ldots,n$.
A real matrix $A = (a_{ij})_{m \times n}$ is called nonnegative 
if all its components satisfy $a_{ij} \geq 0$, 
and positive if $a_{ij} > 0$ for all $i = 1,2,\ldots,m$ and $j = 1,2,\ldots,n$.
We denote these as $A \geq 0$ and $A > 0$, respectively.
We denote by $A \circ B = (a_{ij}\cdot b_{ij})_{m \times n}$ the Hadamard product of $A$ and $B$.
Moreover, for any real vectors $\myvec{a} = (a_1,a_2,\ldots,a_n)^{\top}$
and $\myvec{b} = (b_1,b_2,\ldots,b_n)^{\top}$, 
we write $\myvec{a} \geq \myvec{b}$ (respectively, $\myvec{a} > \myvec{b}$) 
if $a_i \geq b_i$ (respectively, $a_i > b_i$) for all $i = 1,2,\ldots,n$.
The vector of all zero components is denoted by $\myvec{0}$.
A vector $\myvec{v} \in \RS^n$ is called nonnegative if $\myvec{v} \geq \myvec{0}$,
and positive if $\myvec{v} > \myvec{0}$.

\subsection{Problem setting}

The form of NARE from neutron transport theory is as follows:
\begin{equation}
  \label{eq:NARE}
  XCX - XD - AX + B = 0,
\end{equation}
where $X \in \RS^{n \times n}$ is an unknown matrix, 
and $A, B, C, D \in \RS^{n \times n}$ are known matrices given by 
\begin{equation}
  \label{mat:ABCD}
  A = \Delta - \myvec{e}\myvec{p}^{\top}, \quad
  B = \myvec{e}\myvec{e}^{\top}, \quad
  C = \myvec{p}\myvec{p}^{\top}, \quad
  D = \hat{\Delta} - \myvec{p}\myvec{e}^{\top}.
\end{equation}
Here $\Delta = \diag(\delta_1,\delta_2,\ldots,\delta_n)$ with $\delta_i = 1/(c\omega_i(1+a)) > 0$,
$\hat{\Delta} = \diag(\hat{\delta}_1,\hat{\delta}_2,\ldots,\hat{\delta}_n)$ with $\hat{\delta}_i = 1/(c\omega_i(1-a)) > 0$,
and $\myvec{p} = (p_1,p_2,\ldots,p_n)^{\top}$ with $p_i = c_i/(2\omega_i) > 0$.
The matrices and vectors above depend on a pair of parameters $(a,c)$ with
\begin{equation}
  \label{cons:alpha_c}
  a \in [0,1) \quad \text{and} \quad c \in (0,1].
\end{equation}
Moreover, the sets $\{\omega_i\}_{i=1}^n$ and $\{c_i\}_{i=1}^n$ 
represent the nodes and weights, respectively, of the Gauss-Legendre quadrature 
on the interval $[0,1]$, satisfying
$$
0 < \omega_n < \cdots < \omega_2 < \omega_1 < 1 \ \text{and} \ \sum_{i=1}^n c_i = 1 \ \text{with} \ c_i > 0.
$$
Clearly, the sequences $\{\delta_i\}_{i=1}^n$ and $\{\hat{\delta}_i\}_{i=1}^n$ are
strictly monotonically increasing, and
$$
\begin{cases}
\delta_i = \hat{\delta}_i, & \mbox{when $a = 0$},\\
\delta_i \neq \hat{\delta}_i, & \mbox{when $a \neq 0$},
\end{cases}
\quad i = 1,2,\ldots,n.
$$
The NARE \eqref{eq:NARE} is obtained by a discretization of an integrodifferential equation
describing neutron transport during a collision process.
The solution of interest from a physical perspective is the minimal nonnegative solution,
as discussed in previous studies
\cite{Juang1995,JuangL1998,Guo2001}.

Lu \cite{Lu2005a} demonstrated that the solution to equation \eqref{eq:NARE} can be expressed by:
$$
X = T \circ (\myvec{u} \myvec{v}^{\top}) = (\myvec{u} \myvec{v}^{\top}) \circ T,
$$
where $T = (t_{ij})_{n \times n} = \left(\frac{1}{\delta_i + \hat{\delta}_j}\right)_{n\times n}$,
$\myvec{u}$ and $\myvec{v}$ are vectors satisfying
\begin{equation}
\label{eq:VecEq_uv}
\left\{
\begin{aligned}
\myvec{u} &= \myvec{u} \circ (P\myvec{v}) + \myvec{e},\\
\myvec{v} &= \myvec{v} \circ (\widetilde{P}\myvec{u}) + \myvec{e},
\end{aligned}
\right.
\end{equation}
with
\begin{equation}
\label{mat:P_Ptilde}
  P
    = (p_{ij})_{n\times n}
    = \left(\frac{p_j}{\delta_i + \hat{\delta}_j}\right)_{n\times n}, \quad 
  \widetilde{P}
    = (\widetilde{p}_{ij})_{n\times n}
    = \left(\frac{p_j}{\hat{\delta}_i + \delta_j}\right)_{n\times n}.
\end{equation}
We set $\myvec{x} = [\myvec{u}^{\top},\myvec{v}^{\top}]^{\top} \in \RS^{2n}$.
Then the objective of finding the minimal nonnegative solution of \eqref{eq:NARE} is equivalent to
finding solutions for the nonlinear system
\begin{equation}
\label{eq:f(u,v)=0}
\myvec{f}(\myvec{x}) = 
\myvec{f}(\myvec{u},\myvec{v}) \stackrel{\text{def}}{=}
\begin{bmatrix} 
  \myvec{u} - \myvec{u} \circ (P\myvec{v}) - \myvec{e} \\ 
  \myvec{v} - \myvec{v} \circ (\widetilde{P}\myvec{u}) - \myvec{e}
\end{bmatrix} = \myvec{0},
\end{equation}
or alternatively, to finding the fixed point of the fixed-point problem 
\begin{equation}
  \label{eq:g(u,v)=0}
  \myvec{x} = \myvec{g}(\myvec{x}) = 
  \myvec{g}(\myvec{u},\myvec{v}) \stackrel{\text{def}}{=}
  \begin{bmatrix} 
    \myvec{u} \circ (P\myvec{v}) + \myvec{e} \\ 
    \myvec{v} \circ (\widetilde{P}\myvec{u}) + \myvec{e}
  \end{bmatrix}.
\end{equation}
The advantage in representing \eqref{eq:VecEq_uv} as the nonlinear system \eqref{eq:f(u,v)=0} is that
we now can use the Newton-type methods to solve it.
Since Lu's Newton-based algorithm was introduced in \cite{Lu2005b},
extensive research has been conducted on Newton-type methods, 
focusing either on improving their effectiveness 
or on accelerating convergence via higher-order techniques
(see, e.g. \cite{BiniIP2008,LinBW2008,LinB2008,HuangKH2010,LingX2017,LingLL2022}).
Moreover, there has been significant interest in developing more effective fixed-point iterative algorithms 
for solving the fixed-point problem \eqref{eq:g(u,v)=0}, such as the ones described in
\cite{BaoLW2006,BaiGL2008,Lin2008,GuoL2010,LinBW2011,HuangM2014,HuangM2020}.

Let $P \in \RS^{n \times n}$
be partitioned column-wise as $P = [\myvec{p}_1,\myvec{p}_2,\ldots,\myvec{p}_n]$, and similarly for 
$\tilde{P} = [\tilde{\myvec{p}}_1,\tilde{\myvec{p}}_2,\ldots,\tilde{\myvec{p}}_n] \in \RS^{n \times n}$.
We note that the nonlinear operator $\myvec{f}$ defined by \eqref{eq:f(u,v)=0}
is continuously Fr\'echet differentiable.
The Jacobian at point $(\myvec{u},\myvec{v})$ is given by (see \cite{Lu2005b})
\begin{equation*}
  \label{mat:Jacobian}
  \myvec{f}'(\myvec{u},\myvec{v}) = I_{2n} - G(\myvec{u},\myvec{v}),
\end{equation*}
where 
\begin{equation*}
  \label{mat:G}
  G(\myvec{u},\myvec{v}) = 
  \begin{bmatrix}
    G_{11}(\myvec{v}) & G_{12}(\myvec{u}) \\
    G_{21}(\myvec{v}) & G_{22}(\myvec{u})
  \end{bmatrix}
\end{equation*}
with 
\begin{align*}
  G_{11}(\myvec{v}) &= \diag(P\myvec{v}), \quad
  G_{12}(\myvec{u}) = [\myvec{u}\circ \myvec{p}_1,\myvec{u}\circ \myvec{p}_2,\ldots,\myvec{u}\circ \myvec{p}_n], \\
  G_{22}(\myvec{u}) &= \diag(\tilde{P}\myvec{u}), \quad 
  G_{21}(\myvec{v}) = [\myvec{v} \circ \tilde{\myvec{p}}_1,\myvec{v}\circ\tilde{\myvec{p}}_2,\ldots,\myvec{v}\circ\tilde{\myvec{p}}_n].
\end{align*}
It follows that the Jacobian of the fixed-point function $\myvec{g}$ is $G(\myvec{u},\myvec{v})$.
Furthermore, the Jacobian $\myvec{f}'$ is Lipschitz continuous with respect to the $\ell^\infty$ norm.
In fact, for any $\myvec{x}, \myvec{y} \in \RS^{2n}$, we have
$$
\|\myvec{f}'(\myvec{x}) - \myvec{f}'(\myvec{y})\|_{\infty}
  = \|G(\myvec{u},\myvec{v}) - G(\myvec{u},\myvec{v})\|_{\infty}
  \leq 2\max_{1\leq i \leq n} \left\{\sum_{j=1}^{n}p_{ij}, \sum_{j=1}^{n}\tilde{p}_{ij}\right\} \|\myvec{x} - \myvec{y}\|.
$$
According to Lemma 3 in \cite{Lu2005a}, it holds that
\begin{equation}
  \label{ineq:sum_pij}
  \sum_{j=1}^{n}p_{ij} < \frac{c(1-a)}{2} \quad \text{and} \quad
  \sum_{j=1}^{n}\tilde{p}_{ij} < \frac{c(1+a)}{2}.
\end{equation}
Then we can conclude that
$$
\|\myvec{f}'(\myvec{x}) - \myvec{f}'(\myvec{y})\|_{\infty} 
  \leq c(1+a) \|\myvec{x} - \myvec{y}\|_{\infty}.
$$
This means that the Jacobian $\myvec{f}'$ satisfies the Lipschitz continuous 
with Lipschitz constant $L = c(1+a)$.
It is worth pointing out that the differentiability assumption 
is required for the convergence analysis, but not for the implementation of the algorithm.

Let $\myvec{x}^* \in \RS^{2n}$ be the minimal nonnegative solution of the nonlinear system \eqref{eq:f(u,v)=0}.
It was shown in \cite[Theorem 4.1]{BaiGL2008} that 
$$
\myvec{e} < \frac{2}{1 + \sqrt{1-4\phi}}\myvec{e} \leq \myvec{x}^*
 \leq \frac{2}{1 + \sqrt{1-4\Phi}}\myvec{e} \leq 2\myvec{e}, 
 \quad 0 < \phi < \Phi \leq \frac{1}{4},
$$
which improves upon the previous results obtained in \cite{Lu2005a}.
We observe that the fixed-point function $\myvec{g}$ defined by \eqref{eq:g(u,v)=0} is a contractive operator
with respect to the $\ell^\infty$ norm under some condition depending on the parameters $c$ and $a$. 
Indeed, for any $\myvec{x} = \mytran{[\mytran{\myvec{u}},\mytran{\myvec{u}}]}$,
$\myvec{y} = \mytran{[\mytran{\myvec{s}},\mytran{\myvec{t}}]} \in \RS^{2n}$, we have
\begin{align*}
  \|\myvec{g}(\myvec{x}) - \myvec{g}(\myvec{y})\|_{\infty}
  &= \left\|\begin{bmatrix}
    \myvec{u}\circ(P\myvec{v} - P\myvec{t}) + (\myvec{u} - \myvec{s})\circ(P\myvec{t}) \\
    \myvec{v}\circ(\tilde{P}\myvec{u} - \tilde{P}\myvec{s}) + (\myvec{v} - \myvec{t})\circ(\tilde{P}\myvec{s})
  \end{bmatrix}\right\|_{\infty} \\
  &\leq \left\|\begin{bmatrix}
    \myvec{u} \\
    \myvec{v}
  \end{bmatrix}\right\|_{\infty}
  \left\|\begin{bmatrix}
    \tilde{P} & \\
    & P
  \end{bmatrix}
  \begin{bmatrix}
    \myvec{u} - \myvec{s} \\
    \myvec{v} - \myvec{t}
  \end{bmatrix}\right\|_{\infty} 
  + \left\|\begin{bmatrix}
    \myvec{u} - \myvec{s} \\
    \myvec{v} - \myvec{t}
  \end{bmatrix}\right\|_{\infty}
  \left\|\begin{bmatrix}
    \tilde{P} & \\
    & P
  \end{bmatrix}
  \begin{bmatrix}
    \myvec{s} \\
    \myvec{t}
  \end{bmatrix}\right\|_{\infty}.
\end{align*}
We conclude from \eqref{ineq:sum_pij} that
\begin{equation*}
  \label{ineq:norm_g(x)-g(y)}
\|\myvec{g}(\myvec{x}) - \myvec{g}(\myvec{y})\|_{\infty}
  \leq \frac{c(1+a)}{2}(\|\myvec{x}\|_{\infty} + \|\myvec{y}\|_{\infty})\|\myvec{x} - \myvec{y}\|_{\infty}.
\end{equation*}
If 
$\myvec{x},\myvec{y} \in \ball_+(\myvec{0},r) 
  := \{\myvec{z} \in \RS^{2n} \mid \|\myvec{z}\|_\infty < r, \myvec{z} \geq \myvec{0}\}$
with $r = 2/\big(1+\sqrt{1-4\Phi}\big)$, then it follows that
$$
\|\myvec{g}(\myvec{x}) - \myvec{g}(\myvec{y})\|_{\infty}
  < \frac{c(1+a)}{2} \cdot \frac{4}{1 + \sqrt{1-4\Phi}} \|\myvec{x} - \myvec{y}\|_{\infty}
  = \frac{2c(1+a)}{1+\sqrt{1-4\Phi}}\|\myvec{x} - \myvec{y}\|_{\infty}.
$$
This means that the fixed-point function $\myvec{g}$ is contractive on the ball $\ball_+(\myvec{0},r)$
with contraction factor $\theta = 2c(1+a)/(1+\sqrt{1-4\Phi})$, provided that
$$
2c(1+a) < 1 + \sqrt{1-4\Phi}.
$$ 

Since the minimal positive solution of the nonlinear system \eqref{eq:f(u,v)=0},
or equivalently, the fixed-point problem \eqref{eq:g(u,v)=0}, is generally unavailable, 
Corollary \ref{cor:LocalConvLipCondAnderson(m)} cannot be directly applied.
Nonetheless, we can still verify the convergence results through numerical experiments
comparing Anderson acceleration with several established fixed-point iterative methods.
These experiments, presented in Subsection \ref{subsec:NumericalExperiments},
serve as an indirect validation of our theoretical results 
and further illustrate the robustness and efficiency of Anderson acceleration in various problem settings,
particularly in nearly singular and large-scale problems.

%--------------------------------------------------------------------
\subsection{Implementation}

One of the main challenges in implementing the Anderson acceleration described
in Algorithm \ref{alg:AA} is determining the coefficients $\{\alpha_j^{(k)}\}_{j=0}^{m_k}$ 
by solving the constrained least squares problem \eqref{optim:alpha}.
Following the approach in \cite{FangS2009,WalkerN2011},
this problem can be equivalently reformulated as an unconstrained least squares problem,
which can be efficiently solved using the QR factorization.
To this end, define $\Delta{\myvec{f}_i} = \myvec{f}_{i+1} - \myvec{f}_i$ for any $i \geq 1$
and let 
$\mathcal{F}_k = (\Delta{\myvec{f}_{k-m_k}}, \ldots, \Delta{\myvec{f}_{k-1}}) \in \RS^{n\times m_k}$.
If we set $\alpha_0 = \gamma_0$,
$$
\alpha_i = \gamma_i - \gamma_{i-1}, \quad i = 1,2,\ldots,m_k-1,
$$
and $\alpha_{m_k} = 1 - \gamma_{m_k-1}$, 
then the constrained least squares problem \eqref{optim:alpha} is equivalent to the following unconstrained least squares problem:
\begin{equation}
  \label{optim:alpha_unconstrained}
  \min_{\bm{\gamma} = \mytran{(\gamma_0,\ldots,\gamma_{m_k-1})}} 
\|\myvec{f}_{k} - \mathcal{F}_k\bm{\gamma}\|_2.
\end{equation}
We denote the least squares solution by 
$\bm{\gamma}^{(k)} = \mytran{(\gamma_0^{(k)},\ldots,\gamma_{m_k-1}^{(k)})}$.
In addition, we set $\mathcal{G}_k = (\Delta{\myvec{g}_{k-m_k}},\ldots,\Delta{\myvec{g}_{k-1}})$
with $\Delta{\myvec{g}_i} = \myvec{g}(\myvec{x}_{i+1}) - \myvec{g}(\myvec{x}_i)$.
Then the updated iteration $\myvec{x}_{k+1}$ in Algorithm \ref{alg:AA} can be written as
$$
\myvec{x}_{k+1} = \myvec{g}(\myvec{x}_k) - \mathcal{G}_k \bm{\gamma}^{(k)},
$$
For $\mathcal{F}_k \in \RS^{n \times m_k}$, since $m_k \ll n$,
we can compute the QR factorization of $\mathcal{F}_k$ via thin QR decomposition.
Let $\mathcal{F}_k = Q_kR_k$ be the thin QR factorization,
where $Q_k \in \RS^{n \times m_k}$ has orthogonal columns
and $R_k \in \RS^{m_k \times m_k}$ is an upper triangular matrix.
Then the unconstrained least squares problem \eqref{optim:alpha_unconstrained}
reduces to 
$$
\min_{\bm{\gamma} = \mytran{(\gamma_0,\ldots,\gamma_{m_k-1})}} 
\|\mytran{Q}_k\myvec{f}_{k} - R_k\bm{\gamma}\|_2.
$$
Therefore, the least squares solution $\bm{\gamma}^{(k)} \in \RS^{m_k}$ can be obtained by
solving the upper triangular system $R_k\bm{\gamma} = \mytran{Q}_k\myvec{f}_{k}$.

We point out that each matrix $\mathcal{F}_k \in \RS^{n \times m_k}$ 
in \eqref{optim:alpha_unconstrained} is obtained from $\mathcal{F}_{k-1}$ 
by appending the new column on the right and, 
if the resulting number of columns exceeds the depth $m$,
also deleting the first column on the left.
This means that the QR factorization of $\mathcal{F}_k$ can be efficiently updated 
from that of $\mathcal{F}_{k-1}$ using standard QR updating techniques \cite{GolubV1996},
with a computational cost of only $\mathcal{O}(m_kn)$.
It is worth noting that $\mathcal{F}_k$ does not need to be explicitly
constructed in the algorithm implementation.

To delete the first column on the left, 
we utilize MATLAB's \texttt{qrdelete} function for efficient QR factorization updating.
For appending a column on the right, we assume the QR factorization of $\mathcal{F}_{k-1} \in \RS^{n \times (m_k-1)}$
is given by $\mathcal{F}_{k-1} = Q_{k-1}R_{k-1}$,
with $Q_{k-1} \in \RS^{n \times (m_k-1)}$ and $R_{k-1} \in \RS^{(m_k-1)\times(m_k-1)}$.
Then the updated matrix $\mathcal{F}_k$ has the form
$$
Q_kR_k = \mathcal{F}_{k} 
  = [\mathcal{F}_{k-1}, \Delta{\myvec{f}_{k-1}}]
  = [Q_{k-1}R_{k-1}, \Delta{\myvec{f}_{k-1}}].
$$
If we set $Q_k = [Q_{k-1},\myvec{q}_{m_k}]$ 
and $R_k = \begin{bmatrix}
  R_{k-1} & \myvec{r}_{m_k} \\ 0 & r_{m_km_k}
\end{bmatrix}$, then the new column $\Delta{\myvec{f}_{k-1}}$ admits the decomposition
$$
\Delta{\myvec{f}_{k-1}} = Q_{k-1}\myvec{r}_{m_k} + r_{m_km_k}\myvec{q}_{m_k}
  = [Q_{k-1},\myvec{q}_{m_k}]\begin{bmatrix}
    \myvec{r}_{m_k} \\ r_{m_km_k}
  \end{bmatrix}.
$$
This means that updating the QR factorization from $\mathcal{F}_{k-1}$ to $\mathcal{F}_k$
requires only computing the QR factorization of the new column $\Delta{\myvec{f}_{k-1}}$
against the existing orthogonal basis $Q_{k-1}$, 
which can be efficiently accomplished using a single modified Gram-Schmidt sweep \cite{GolubV1996}.

The implementation of Anderson acceleration for solving the fixed-point problem \eqref{eq:g(u,v)=0}
is summarized in Algorithm \ref{alg:AA_Implementation}.

\begin{algorithm}[t]
\caption{Anderson acceleration for solving the fixed-point problem \eqref{eq:g(u,v)=0}}
\label{alg:AA_Implementation}
\textit{Initialization}.
Given parameters $a \in [0,1)$ and $c \in (0, 1]$. 
Choose initial point 
$\myvec{x}_0 = [\myvec{u}^{\top}_0,\myvec{v}^{\top}_0]^{\top} = \myvec{0} \in \RS^{2n}$ 
and the depth $m \geq 1$.
\begin{itemize}[leftmargin=1em,itemindent=3.5em,parsep=0em,itemsep=0em,topsep=0em]
  \item[Step 1.]
  Form the matrices $P$ and $\tilde{P}$ as defined in \eqref{mat:P_Ptilde}.
  \item[Step 2.]
  Compute $[\myvec{u}^{\top}_1,\myvec{v}^{\top}_1]^{\top} = \myvec{g}_0 \triangleq \myvec{g}(\myvec{u}_0,\myvec{v}_0)$,
  where $\myvec{g}$ is defined by \eqref{eq:g(u,v)=0}.
  \item[Step 3.]
  Compute the initial residual 
  $\myvec{f}_0 = \myvec{g}_0 - \myvec{x}_0$.
\end{itemize}
\textit{Iterative process}.
For $k = 1,2,\ldots$ until convergence, do:
\begin{itemize}[leftmargin=1em,itemindent=3.5em,parsep=0em,itemsep=0em,topsep=0em]
  \item[Step 1.]
  Set $m_k = \min\{m,k\}$.
  \item[Step 2.]
  Compute $\myvec{g}_k = \myvec{g}(\myvec{u}_k,\myvec{v}_k)$
  and the residual $\myvec{f}_k = \myvec{g}_k - \myvec{x}_k$.
  \item[Step 3.]
  Set $\Delta{\myvec{g}_{k-1}} = \myvec{g}_k - \myvec{g}_{k-1}$
  and $\Delta{\myvec{f}_{k-1}} = \myvec{f}_k - \myvec{f}_{k-1}$.
  \item[Step 4.]
  Update the matrix $\mathcal{G}_k$:
  \begin{itemize}
    \item If $k = 1$, set $\mathcal{G}_k = \Delta{\myvec{g}_{k-1}}$.
    \item If $k \leq m$, set $\mathcal{G}_k = [\mathcal{G}_{k-1},\Delta{\myvec{g}_{k-1}}]$.
    \item If $k > m$, set $\mathcal{G}_k = [\mathcal{G}_{k-1}(:,2:m_k),\Delta{\myvec{g}_{k-1}}]$.
  \end{itemize}
  \item[Step 5.]
  Update the QR factorization $\mathcal{F}_k = Q_kR_k$:
  \begin{itemize}
    \item If $k = 1$, set $R_k = \|\Delta{\myvec{f}_{k-1}}\|_2$ 
    and $Q_k = \Delta{\myvec{f}_{k-1}}/R_k$.
    \item If $k > 1$ and $k > m$, 
    update the QR factorization by deleting the first column of $\mathcal{F}_{k-1} = Q_{k-1}R_{k-1}$
    using the MATLAB's \texttt{qrdelete} function: 
    $[Q_k,R_k] = \texttt{qrdelete}(Q_{k-1},R_{k-1},1)$.
    \item If $k > 1$, append the new column $\Delta{\myvec{f}_{k-1}}$ to $\mathcal{F}_{k-1}$
    by using a single modified Gram-Schmidt sweep.
  \end{itemize}
  \item[Step 6.]
  Solve the upper triangular system $R_k\bm{\gamma} = \mytran{Q}_k\myvec{f}_{k}$ 
  to obtain the least squares solution $\bm{\gamma}^{(k)}$.
  \item[Step 7.]
  Update the next iteration 
  $\myvec{x}_{k+1} = [\myvec{u}_{k+1}^{\top},\myvec{v}_{k+1}^{\top}]^{\top} 
    = \myvec{g}_k - \mathcal{G}_k\bm{\gamma}^{(k)}$.
\end{itemize}
\end{algorithm}

%--------------------------------------------------------------------
\subsection{Numerical experiments}
\label{subsec:NumericalExperiments}

This subsection presents numerical experiments that demonstrate 
the effectiveness of Anderson acceleration in reducing both the number of iterations 
and the overall execution time of the fixed-point iterative methods 
for computing the minimal nonnegative solution of the nonlinear system \eqref{eq:f(u,v)=0}, 
or equivalently, the fixed-point problem \eqref{eq:g(u,v)=0},
which stems from a nonsymmetric algebraic Riccati equation in neutron transport theory.

All experiments were conducted in MATLAB R2024b on a MacBook Pro 
equipped with an Apple M3 8-core CPU and 24 GB of RAM.
The algorithms tested are listed below, 
with abbreviations used in the corresponding tables and figures.
\begin{itemize}
  \item AA($m$) (for Anderson acceleration with depth $m$) 
  is our implementation of Algorithm \ref{alg:AA_Implementation}.
  \item FP is the algorithm from \cite{Lu2005a} by using the fixed-point iteration.
  \item MFP is the modified fixed-point iteration algorithm from \cite{BaoLW2006}.
  \item NBJ is the nonlinear block Jacobi iteration algorithm proposed in \cite{BaiGL2008}.
  \item NBGS is the nonlinear block Gauss-Seidel iteration algorithm from \cite{BaiGL2008}.
\end{itemize}
The initial point $\myvec{x}_0$ is set to $\myvec{0}$ for all the algorithms considered above.
Following Example 5.2 of \cite{GuoL2000b}, 
the constants $c_i$ and $\omega_i$ are obtained using a numerical quadrature on $[0,1]$, 
by dividing the interval into $n/4$ equal subintervals 
and applying four nodes Gauss-Legendre quadrature on each.
The stopping criterion for all algorithms considered above is given by
$$
\text{RES} := \max\left\{\frac{\|\myvec{u}_{k+1} - \myvec{u}_k\|_{\infty}}{\|\myvec{u}_{k+1}\|_{\infty}},
  \frac{\|\myvec{v}_{k+1} - \myvec{v}_k\|_{\infty}}{\|\myvec{v}_{k+1}\|_{\infty}}\right\}
  \leq n\cdot\texttt{eps},
$$
where $n$ is the matrix size from \eqref{eq:NARE} 
and $\texttt{eps} = 2^{-52} \approx 2.2204 \times 10^{-16}$ 
denotes the double-precision machine epsilon.
The CPU time is measured in seconds using MATLAB's \texttt{tic/toc} commands.
Each experiment is repeated 10 times, and the average runtime are reported 
in the tables and figures below.
The number of iterations is denoted by IT.

We note that the fixed-point iterative methods proposed in \cite{Lu2005a,BaoLW2006,BaiGL2008}
exhibit linear convergence when the pair of parameters $(a,c) \neq (0,1)$.
A detailed theoretical analysis comparing the convergence rates of these methods
is provided in \cite{GuoL2010}.
The singular case $(a,c) = (0,1)$, known to be particularly challenging,
has been effectively solved by various Newton-type methods 
\cite{LinBW2008,LinB2008,HuangKH2010,LingLL2022,LiangL2025}
or alternative approaches \cite{BiniIP2008,MehrmannX2008}.
Such techniques are generally recommended when $(a,c)$ is close to $(0,1)$,
while fixed-point iterative methods are more suitable for the regular cases.
Our numerical results below demonstrate that AA performs efficiently in both cases.

We begin with the regular case $(a,c) = (0.5,0.5)$.
Figure \ref{fig:IterHistory_alphaC0.5} shows the iteration histories
for the problem sizes $n = 1024, 2048, 4096, 8192$.
It is observed that AA with various depths requires fewer iterations than FP, MFP and NBJ, 
although it still requires more iterations than NBGS.
As $(a,c)$ approaches the singular case $(0,1)$, 
AA becomes increasingly efficient and eventually outperforms NBGS in terms of iteration count,
as illustrated in Figures \ref{fig:IterHistory_alphaC10-1} and \ref{fig:IterHistory_alphaC10-3}.

\begin{figure}
  \centering
  \subfigure{\includegraphics[width=0.42\textwidth]{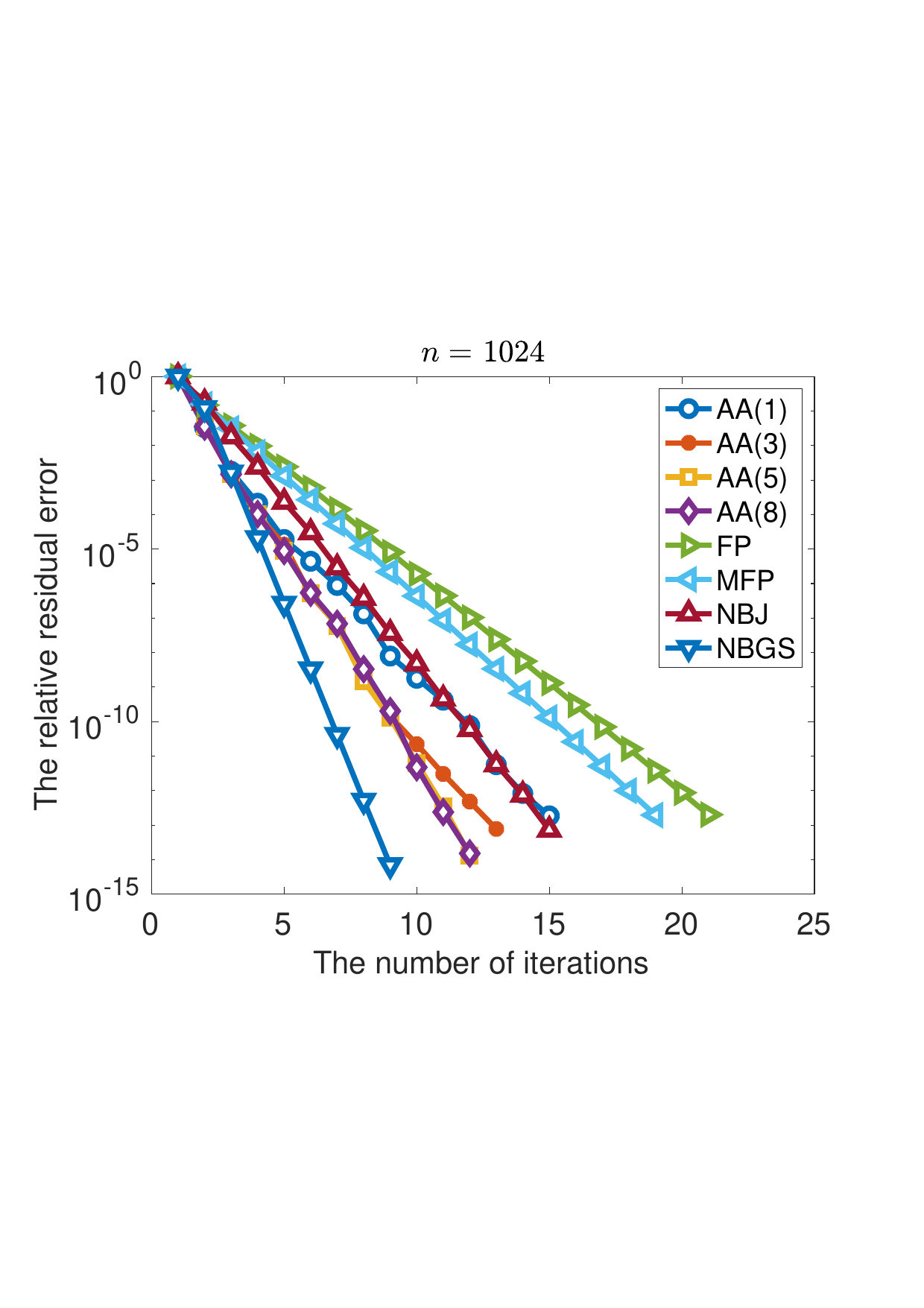}}\quad
  \subfigure{\includegraphics[width=0.42\textwidth]{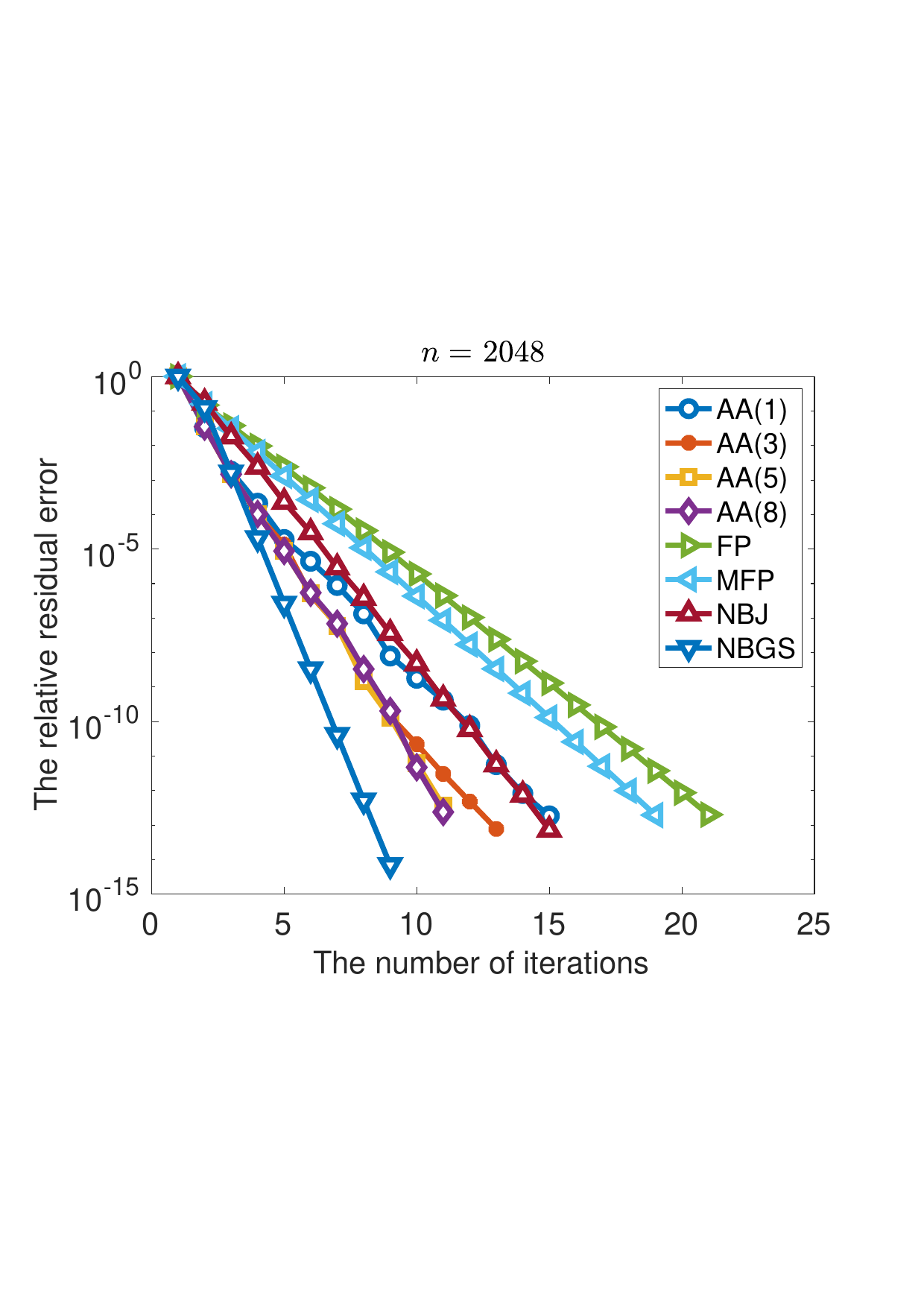}} \\
  \subfigure{\includegraphics[width=0.42\textwidth]{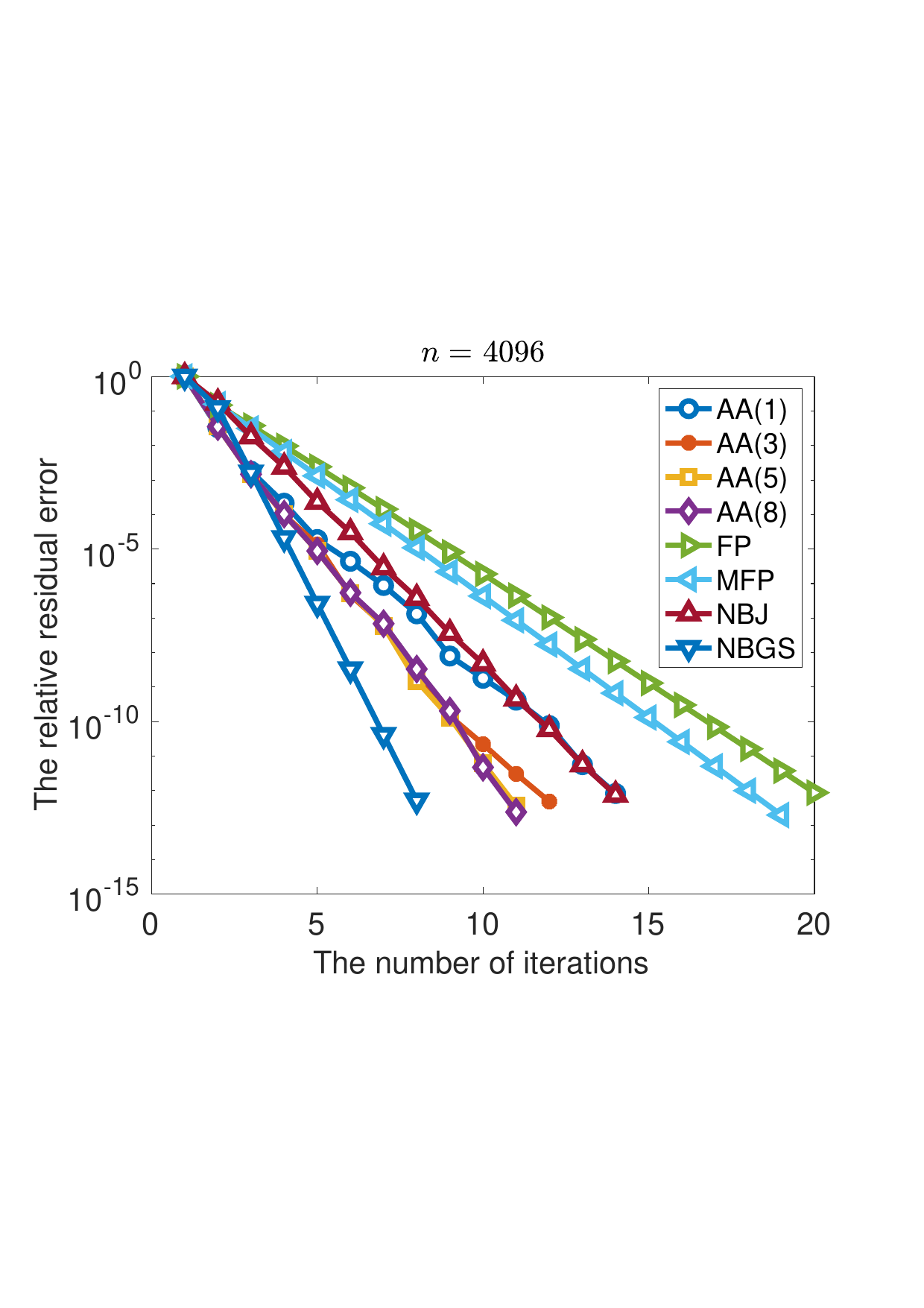}}\quad
  \subfigure{\includegraphics[width=0.42\textwidth]{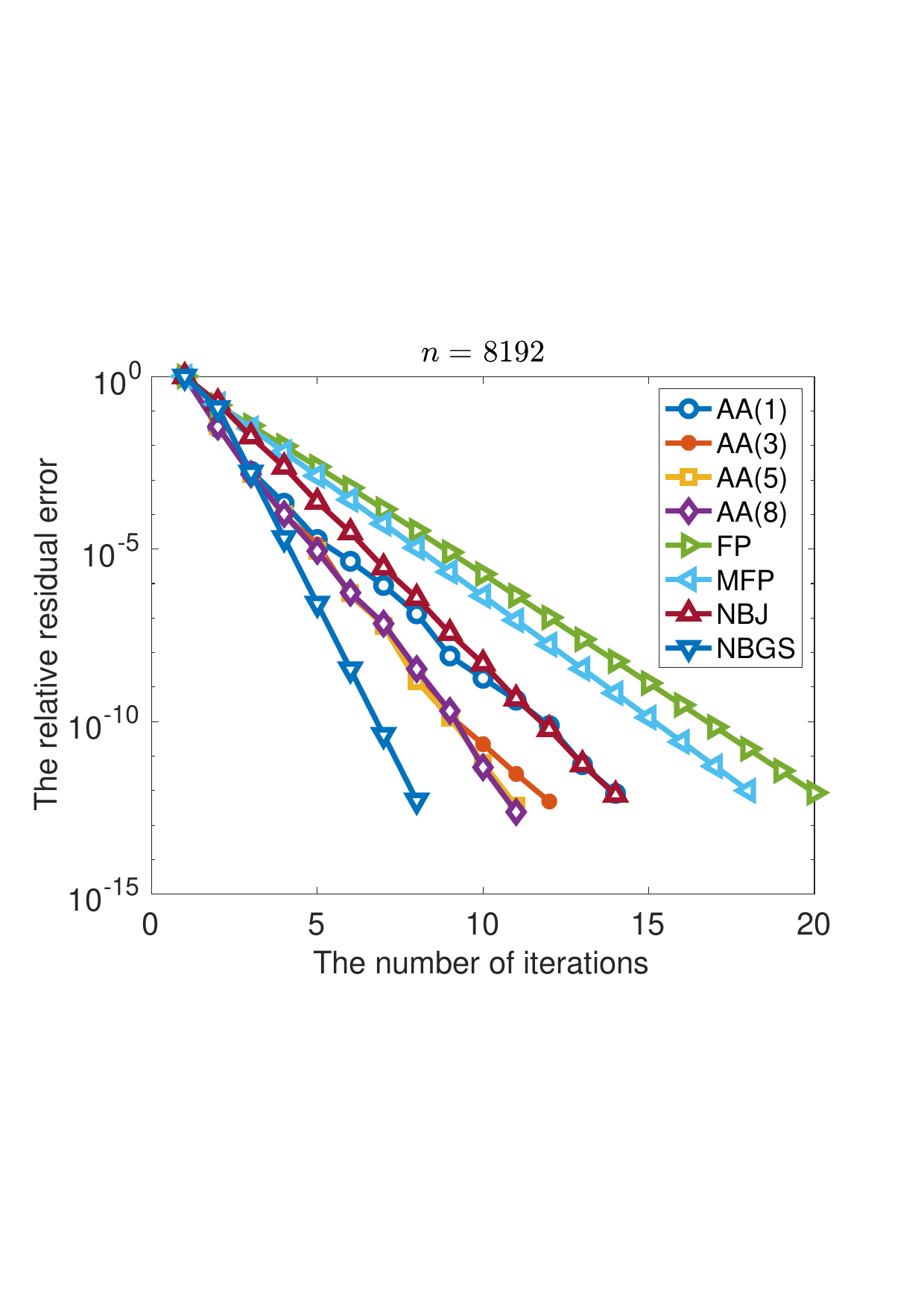}} \\
  \caption{Iteration histories for $(a, c) = (0.5,0.5)$ with various problem sizes $n = 1024, 2048, 4096, 8192$.}
  \label{fig:IterHistory_alphaC0.5}
\end{figure}

\begin{figure}
  \centering
  \subfigure{\includegraphics[width=0.42\textwidth]{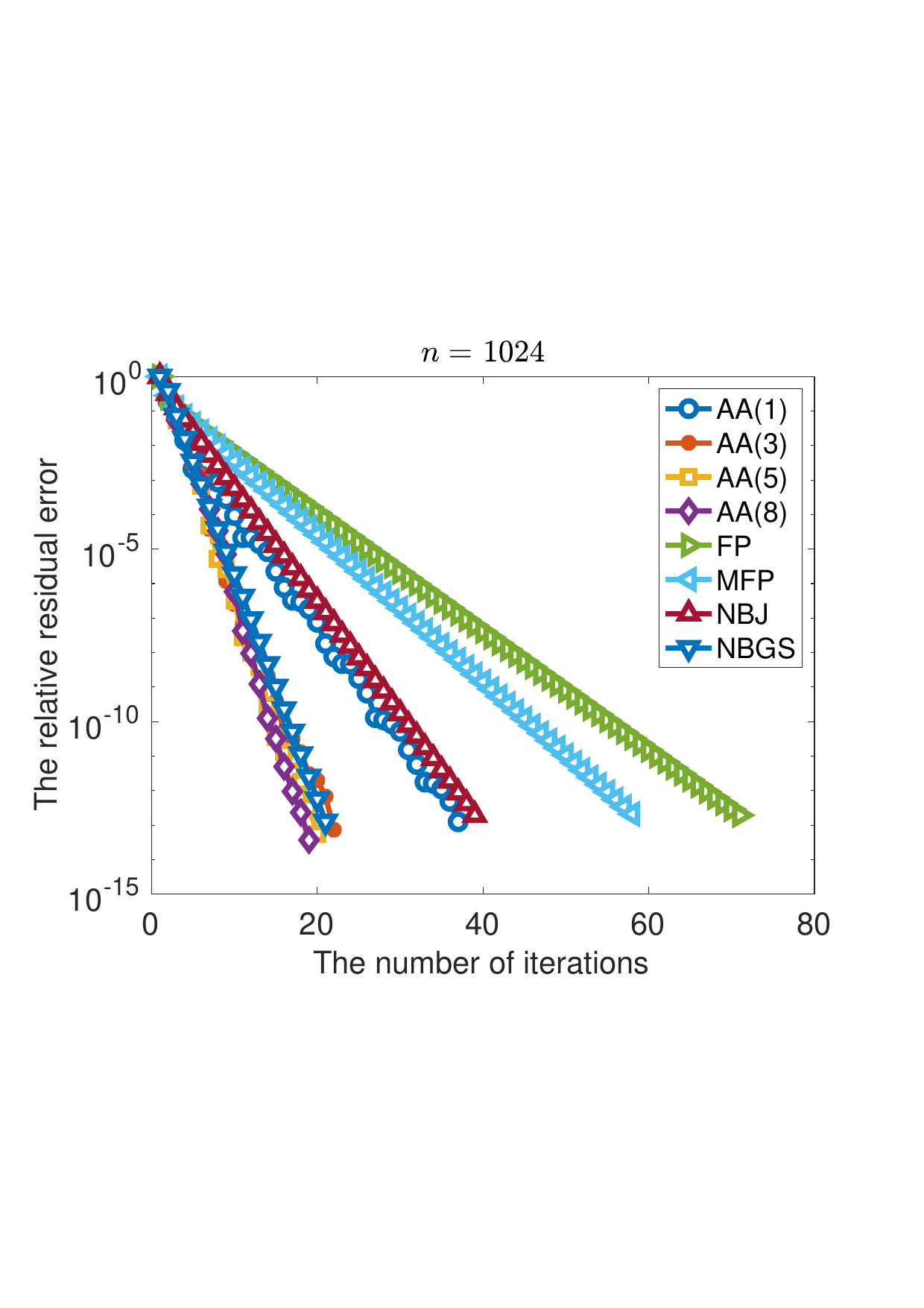}}\quad
  \subfigure{\includegraphics[width=0.42\textwidth]{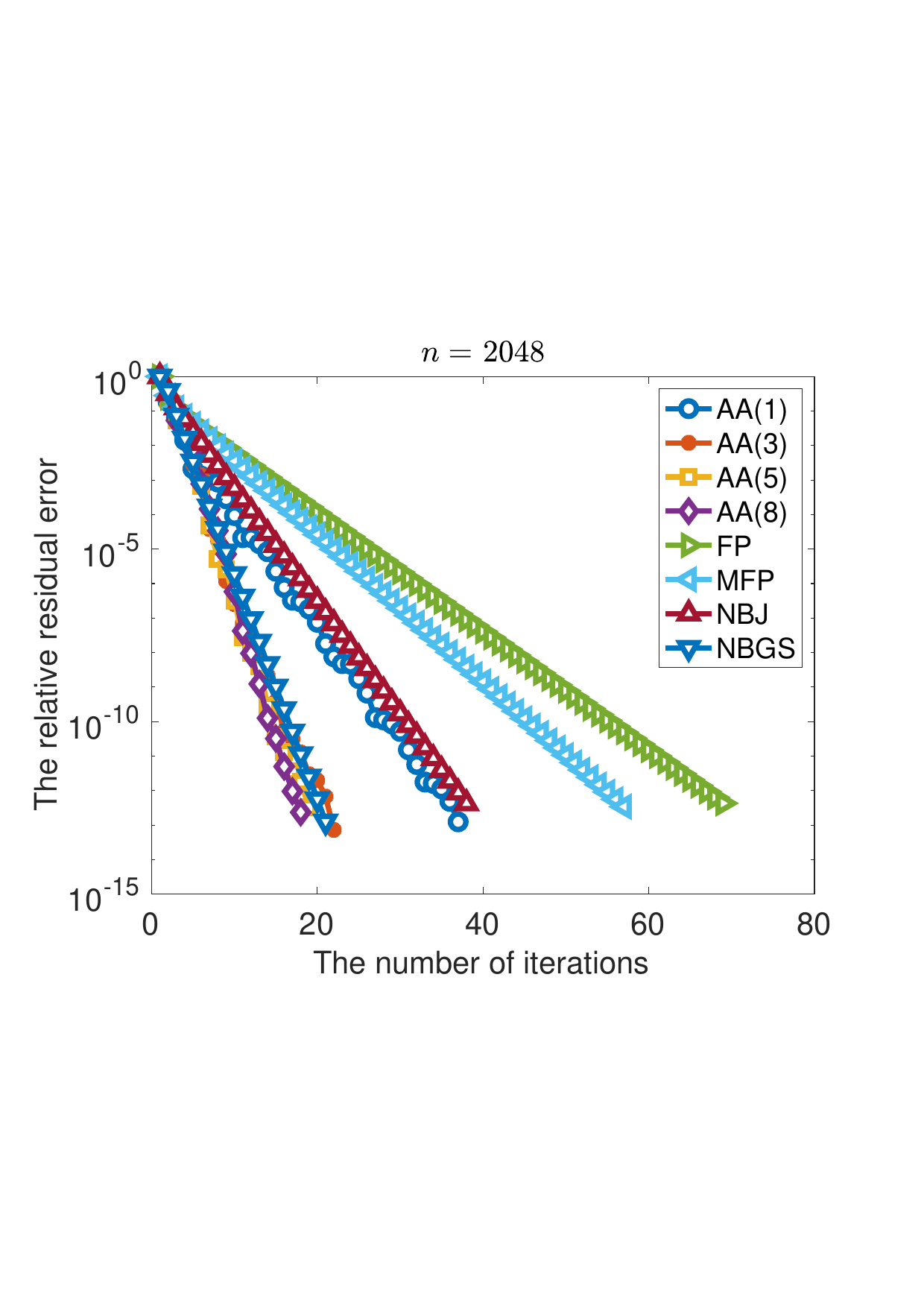}} \\
  \subfigure{\includegraphics[width=0.42\textwidth]{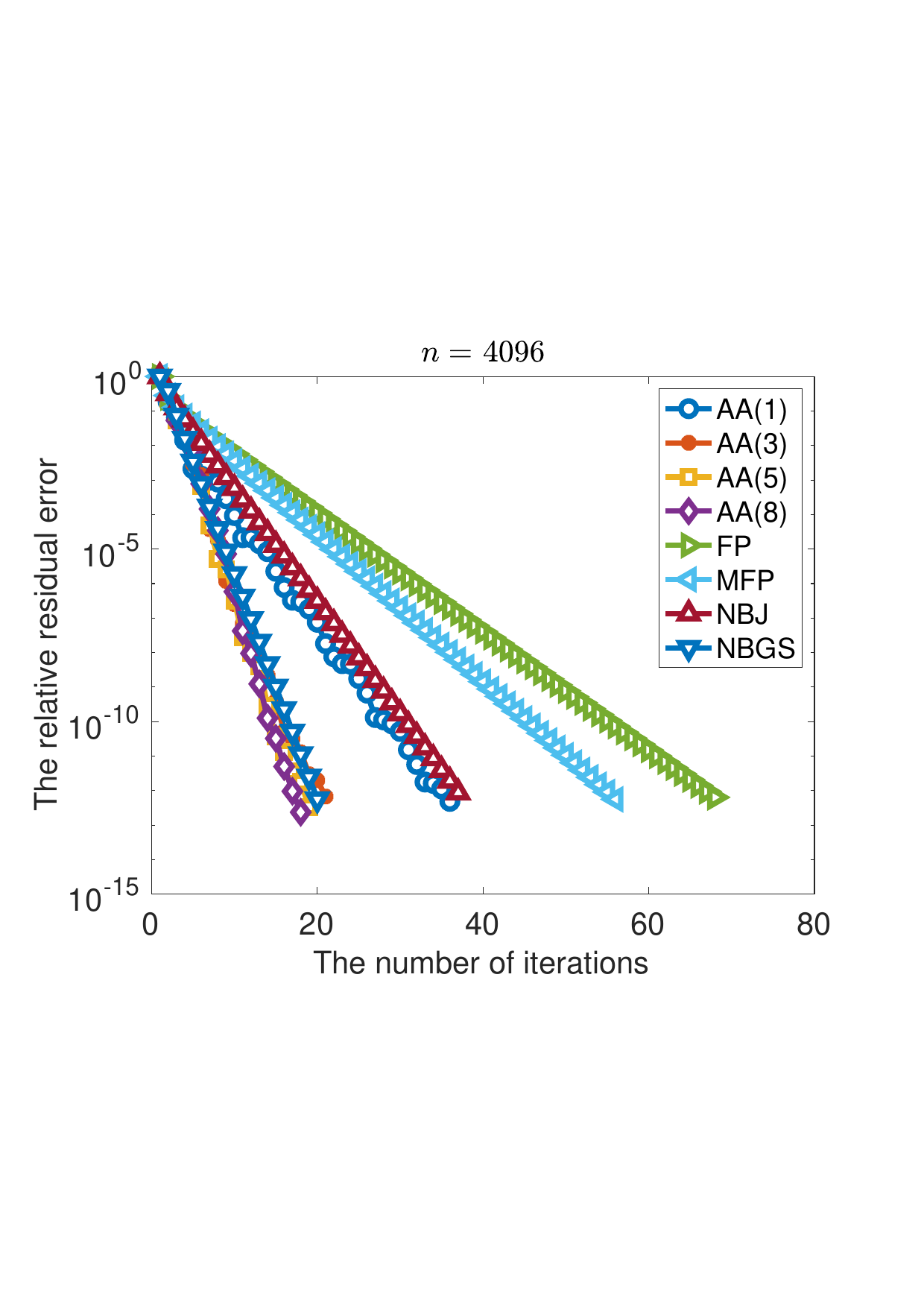}}\quad
  \subfigure{\includegraphics[width=0.42\textwidth]{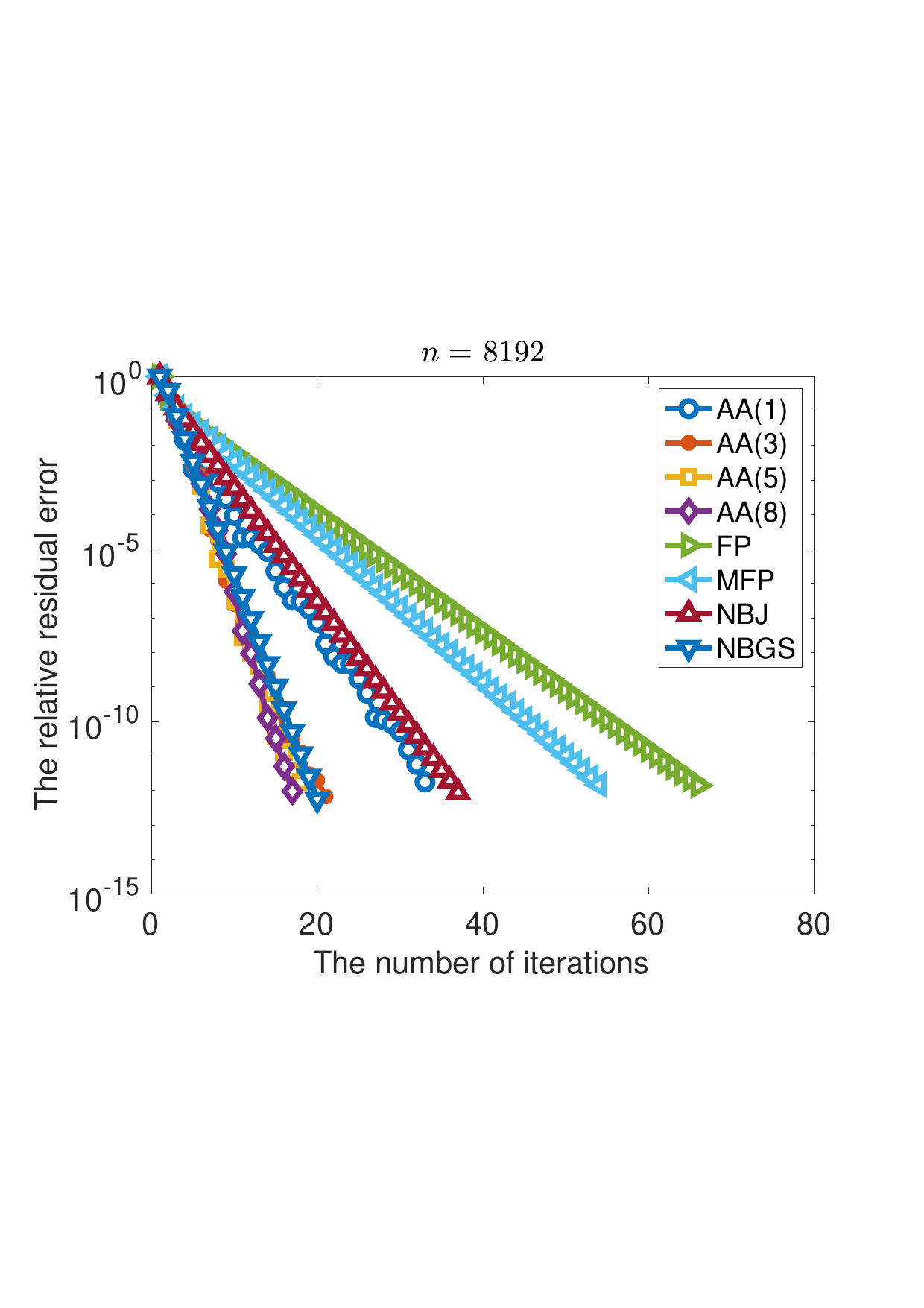}} \\
  \caption{Iteration histories for $(a, c) = (10^{-1},1-10^{-1})$ with various problem sizes $n = 1024, 2048, 4096, 8192$.}
  \label{fig:IterHistory_alphaC10-1}
\end{figure}

\begin{figure}
  \centering
  \subfigure{\includegraphics[width=0.42\textwidth]{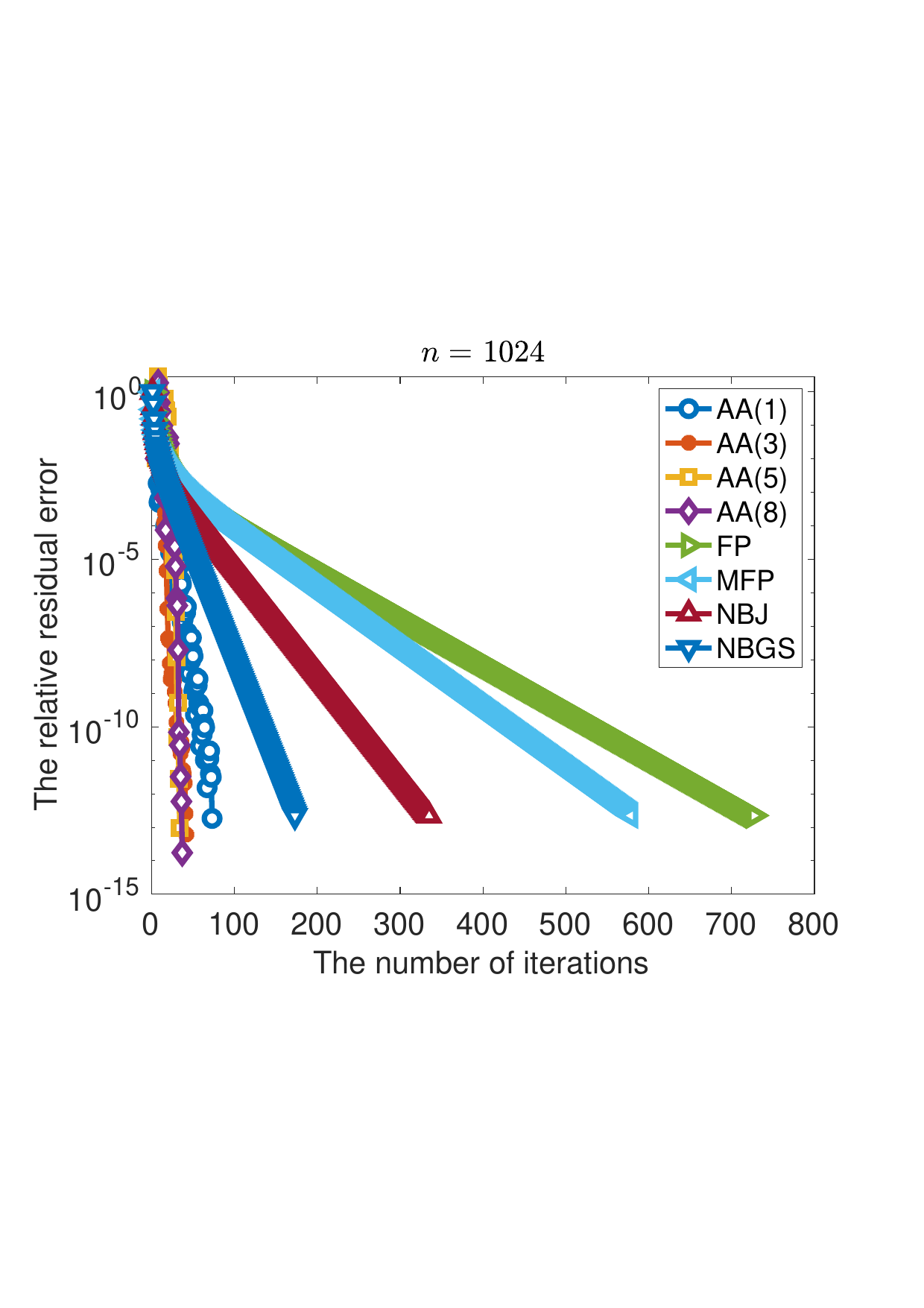}}\quad
  \subfigure{\includegraphics[width=0.42\textwidth]{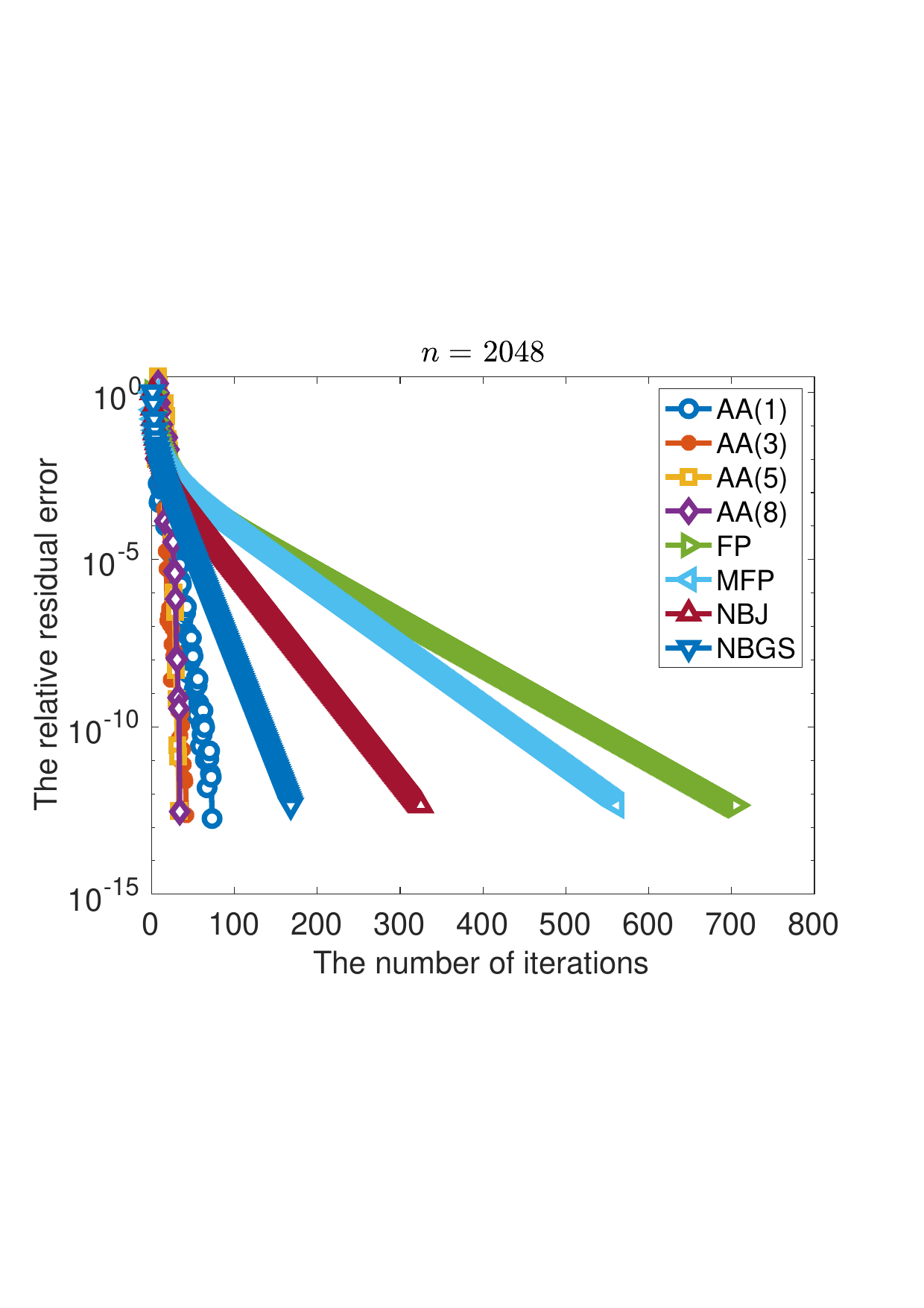}} \\
  \subfigure{\includegraphics[width=0.42\textwidth]{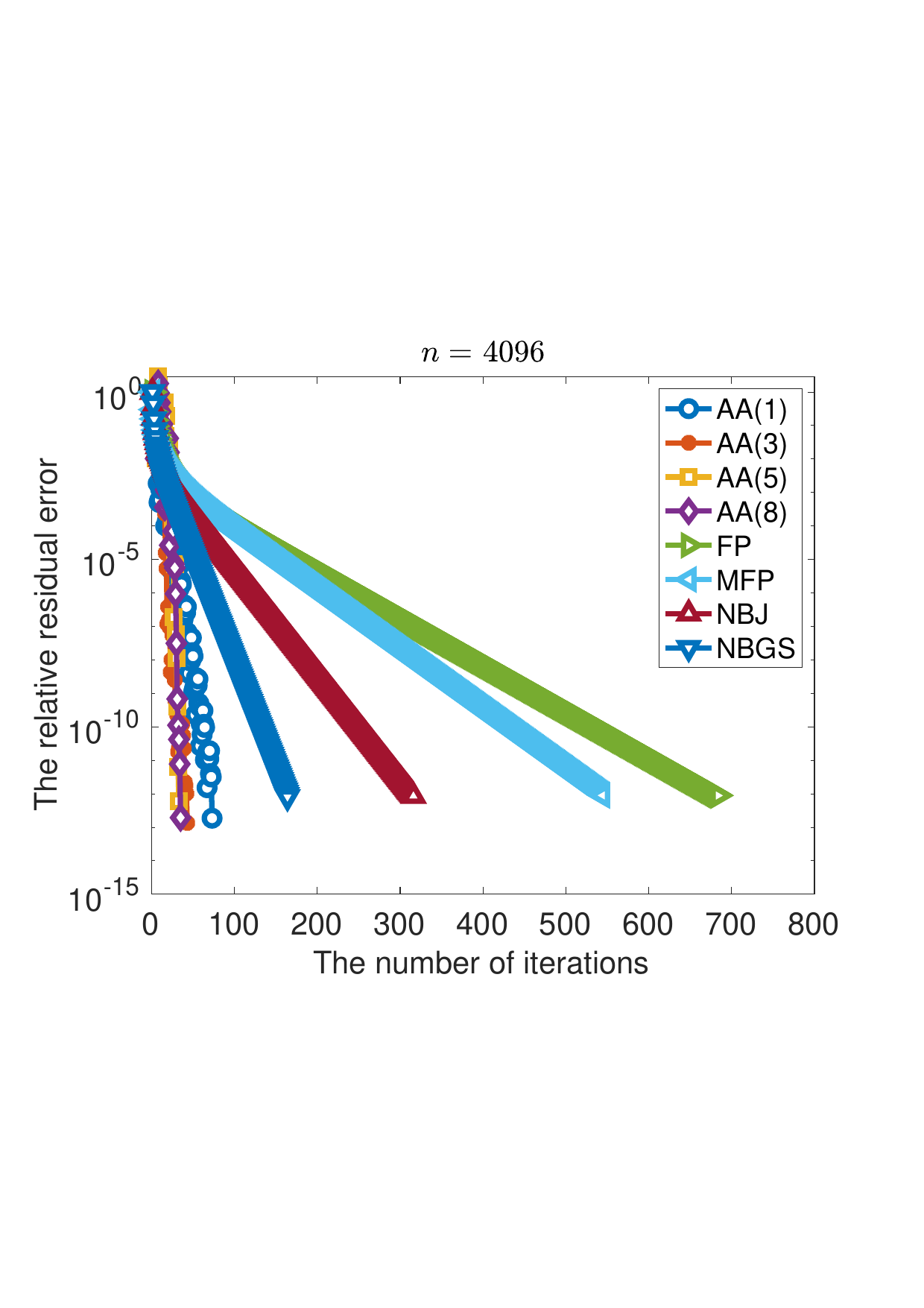}}\quad
  \subfigure{\includegraphics[width=0.42\textwidth]{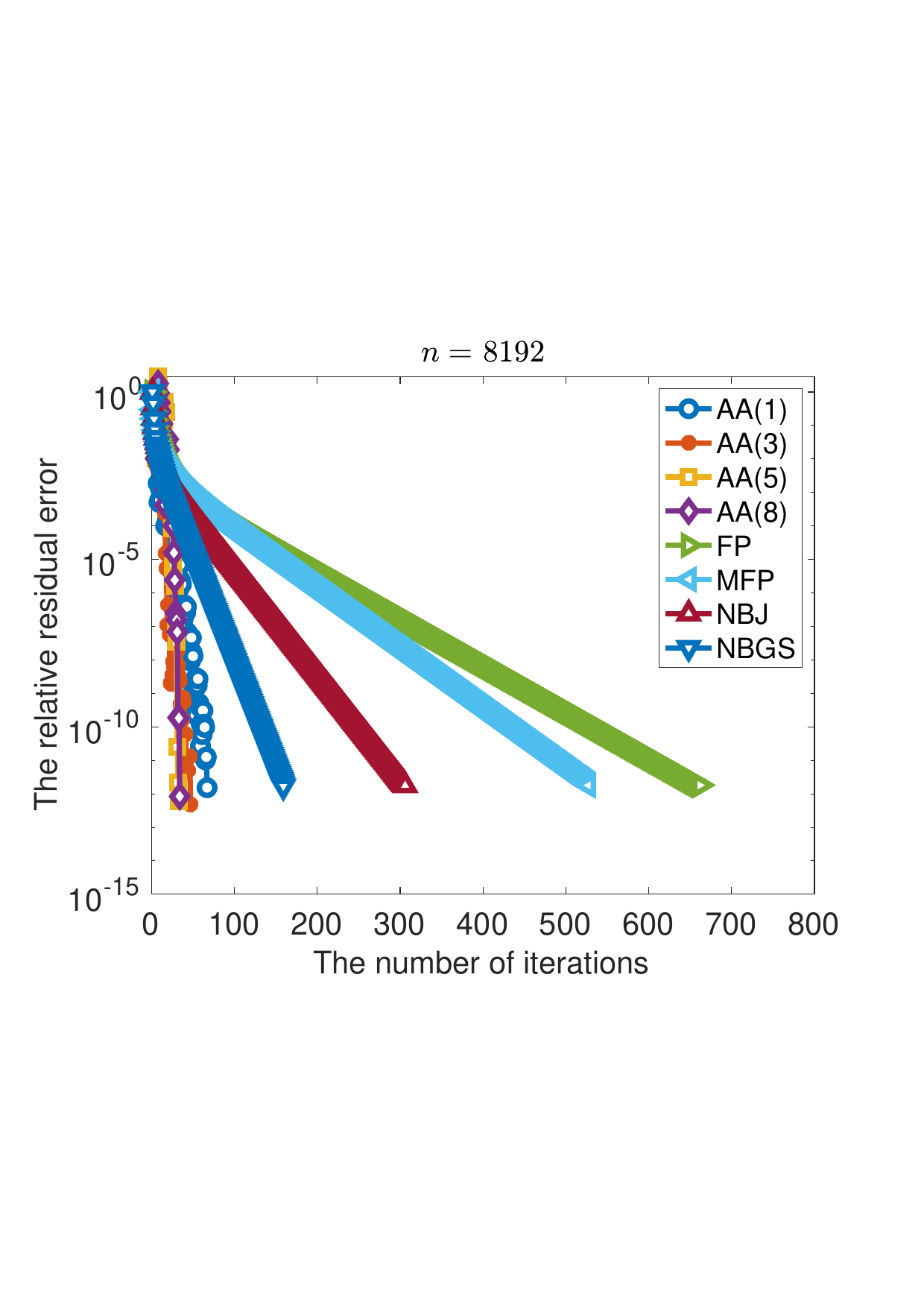}} \\
  \caption{Iteration histories for $(a, c) = (10^{-3},1-10^{-3})$ with various problem sizes $n = 1024, 2048, 4096, 8192$.}
  \label{fig:IterHistory_alphaC10-3}
\end{figure}

Tables \ref{tab:n1024}, \ref{tab:n2048}, \ref{tab:n4096} and \ref{tab:n8192}
report the overall numerical results for seven test cases 
with problem sizes $n = 1024, 2048, 4096$ and $8192$,
further illustrating the significant improvement in the performance of AA 
over other fixed-point iterative methods.
Specifically, AA requires fewer iterations and less computation time than 
FP, MFP and NBJ for each problem size across all seven test cases.
The only exceptions are the regular cases $(a,c) = (0.9,0.1)$ and $(0.1,0.9)$, 
where AA slightly underperforms NBGS in terms of iteration count and computation time.
In all other cases, AA demonstrates superior performance compared to 
the four fixed-point iterative methods,
with its advantages becoming particularly significant
as the pair of parameters $(a,c)$ approaches the singular case $(0,1)$.
In such cases, AA achieves significantly fewer iterations, reduced computation time,
and improved numerical accuracy.
For instance, compared to NBGS, even with depth $m = 1$, 
AA achieves a 720-fold decrease in the number of iterations
for the case $(a,c) = (10^{-9},1-10^{-9})$ when $n = 1024$,
and a 970-fold decrease for the same case when $n = 8192$.

\begin{sidewaystable}
\centering
\caption{Numerical results for $n = 1024$}
\label{tab:n1024}
\begin{tabular}{cccccccccc}
  \toprule
  $(a,c)$ & Item & AA(1) & AA(3) & AA(5) & AA(8) & FP & MFP & NBJ & NBGS \\
  \midrule
  $(0.9,0.1)$ & IT & 7 & 6 & 6 & 6 & 9 & 8 & 7 & 5 \\
  & CPU & 0.0119 & 0.0111 & 0.0127 & 0.0100 & 0.0111 & 0.0117 & 0.0117 & 0.0107 \\
  & RES & 3.7180e-15 & 8.5951e-14 & 6.2769e-14 & 6.2769e-14 & 6.3425e-15 & 1.7322e-13 & 6.2987e-14 & 4.4191e-14 \\
  $(0.1,0.9)$ & IT & 37 & 22 & 20 & 19 & 71 & 58 & 39 & 21 \\
  & CPU & 0.0159 & 0.0141 & 0.0177 & 0.0171 & 0.0203 & 0.0163 & 0.0158 & 0.0139 \\
  & RES & 1.2577e-13 & 7.3995e-14 & 5.6501e-14 & 3.7234e-14 & 1.9562e-13 & 2.1107e-13 & 1.9645e-13 & 1.2801e-13 \\
  $(10^{-2},1-10^{-2})$ & IT & 70 & 29 & 25 & 23 & 242 & 194 & 117 & 61 \\
  & CPU & 0.0196 & 0.0154 & 0.0162 & 0.0173 & 0.0453 & 0.0303 & 0.0244 & 0.0195 \\
  & RES & 1.1239e-13 & 7.4448e-14 & 2.0664e-13 & 1.0577e-13 & 2.2173e-13 & 2.1277e-13 & 1.9614e-13 & 1.7487e-13 \\
  $(10^{-4},1-10^{-4})$ & IT & 119 & 42 & 34 & 34 & 2100 & 1667 & 955 & 494 \\
  & CPU & 0.0293 & 0.0176 & 0.0171 & 0.0169 & 0.2723 & 0.2225 & 0.1309 & 0.0620 \\
  & RES & 1.2653e-13 & 1.4410e-14 & 5.3285e-14 & 3.4963e-14 & 2.2656e-13 & 2.2689e-13 & 2.2176e-13 & 2.1726e-13 \\
  $(10^{-6},1-10^{-6})$ & IT & 114 & 57 & 41 & 42 & 16528 & 13143 & 7531 & 3915 \\
  & CPU & 0.0274 & 0.0207 & 0.0191 & 0.0185 & 2.5279 & 1.8203 & 1.1512 & 0.4395 \\
  & RES & 6.2976e-14 & 1.0726e-13 & 1.1131e-14 & 9.1490e-16 & 2.2722e-13 & 2.2676e-13 & 2.2554e-13 & 2.2646e-13 \\
  $(10^{-8},1-10^{-8})$ & IT & 108 & 53 & 41 & 48 & 119319 & 95406 & 55398 & 29168 \\
  & CPU & 0.0282 & 0.0237 & 0.0173 & 0.0260 & 18.1731 & 13.2879 & 6.8167 & 3.2711 \\
  & RES & 1.5058e-13 & 7.3612e-14 & 1.7471e-13 & 1.9487e-13 & 2.2733e-13 & 2.2733e-13 & 2.2687e-13 & 2.2626e-13 \\
  $(10^{-9},1-10^{-9})$ & IT & 106 & 62 & 49 & 52 & 304534 & 244626 & 143488 & 76421 \\
  & CPU & 0.0283 & 0.0283 & 0.0188 & 0.0241 & 43.7372 & 36.4205 & 16.3861 & 10.2075 \\
  & RES & 6.7664e-14 & 1.0402e-13 & 2.0956e-13 & 2.7799e-14 & 2.2715e-13 & 2.2730e-13 & 2.2730e-13 & 2.2730e-13 \\
  \bottomrule
\end{tabular}
\end{sidewaystable}

\begin{sidewaystable}
\centering
\caption{Numerical results for $n = 2048$}
\label{tab:n2048}
\begin{tabular}{cccccccccc}
  \toprule
  $(a,c)$ & Item & AA(1) & AA(3) & AA(5) & AA(8) & FP & MFP & NBJ & NBGS \\
  \midrule
  $(0.9,0.1)$ & IT & 6 & 6 & 6 & 6 & 8 & 8 & 7 & 5 \\
  & CPU & 0.0593 & 0.0554 & 0.0554 & 0.0554 & 0.0583 & 0.0608 & 0.0589 & 0.0556 \\
  & RES & 3.7858e-13 & 8.6170e-14 & 6.2769e-14 & 6.2769e-14 & 3.8689e-13 & 1.7321e-13 & 6.2987e-14 & 4.4191e-16 \\
  $(0.1,0.9)$ & IT & 37 & 22 & 19 & 18 & 69 & 57 & 38 & 21 \\
  & CPU & 0.1070 & 0.0836 & 0.0780 & 0.0768 & 0.1528 & 0.1362 & 0.1054 & 0.0800 \\ 
  & RES & 1.2564e-13 & 7.3519e-14 & 3.3556e-13 & 2.3628e-13 & 4.3260e-13 & 3.4456e-13 & 4.2847e-13 & 1.2863e-13 \\
  $(10^{-2},1-10^{-2})$ & IT & 69 & 28 & 25 & 23 & 236 & 189 & 114 & 59 \\
  & CPU & 0.1595 & 0.0932 & 0.0883 & 0.0853 & 0.4168 & 0.3377 & 0.2213 & 0.1389 \\ 
  & RES & 3.0851e-13 & 4.4862e-13 & 2.0830e-13 & 1.0773e-13 & 4.1767e-13 & 4.1541e-13 & 3.8832e-13 & 4.3851e-13 \\
  $(10^{-4},1-10^{-4})$ & IT & 115 & 42 & 33 & 34 & 2031 & 1613 & 925 & 478 \\
  & CPU & 0.2363 & 0.1179 & 0.1024 & 0.1048 & 3.2430 & 2.5565 & 1.4444 & 0.7820 \\ 
  & RES & 2.4404e-13 & 3.2872e-13 & 4.3675e-14 & 3.7916e-14 & 4.5448e-13 & 4.5219e-13 & 4.4473e-13 & 4.4831e-13 \\
  $(10^{-6},1-10^{-6})$ & IT & 100 & 45 & 38 & 41 & 15833 & 12598 & 7230 & 3766 \\
  & CPU & 0.2118 & 0.1365 & 0.1111 & 0.1173 & 24.9623 & 19.9647 & 11.0088 & 5.8172 \\ 
  & RES & 2.7612e-13 & 4.2402e-13 & 4.0557e-14 & 1.5095e-14 & 4.5472e-13 & 4.5456e-13 & 4.5410e-13 & 4.5196e-13 \\
  $(10^{-8},1-10^{-8})$ & IT & 109 & 43 & 39 & 52 & 112384 & 89976 & 52336 & 27666 \\
  & CPU & 0.2277 & 0.1305 & 0.1127 & 0.1381 & 174.8984 & 142.5888 & 79.3217 & 42.3485 \\ 
  & RES & 1.8417e-13 & 2.6724e-13 & 1.9501e-13 & 1.5734e-13 & 4.5462e-13 & 4.5462e-13 & 4.5386e-13 & 4.5401e-13 \\
  $(10^{-9},1-10^{-9})$ & IT & 108 & 55 & 55 & 50 & 282652 & 227488 & 134008 & 71720 \\
  & CPU & 0.2248 & 0.1393 & 0.1401 & 0.1329 & 433.1914 & 353.6441 & 204.5411 & 109.9595 \\ 
  & RES & 3.9098e-14 & 3.4700e-13 & 1.0996e-14 & 1.0370e-13 & 4.5472e-13 & 4.5441e-13 & 4.5472e-13 & 4.5426e-13 \\
  \bottomrule
\end{tabular}
\end{sidewaystable}

\begin{sidewaystable}
\centering
\caption{Numerical results for $n = 4096$}
\label{tab:n4096}
\begin{tabular}{cccccccccc}
  \toprule
  $(a,c)$ & Item & AA(1) & AA(3) & AA(5) & AA(8) & FP & MFP & NBJ & NBGS \\
  \midrule
  $(0.9,0.1)$ & IT & 6 & 6 & 6 & 6 & 8 & 8 & 7 & 5 \\
  & CPU & 0.2382 & 0.2259 & 0.2301 & 0.2347 & 0.2487 & 0.2503 & 0.2455 & 0.2290 \\
  & RES & 3.7858e-13 & 8.6389e-14 & 6.2987e-14 & 6.2987e-14 & 3.8711e-13 & 1.7321e-13 & 6.2768e-14 & 4.4191e-16 \\
  $(0.1,0.9)$ & IT & 36 & 21 & 19 & 18 & 68 & 56 & 37 & 20 \\
  & CPU & 0.4272 & 0.3255 & 0.3156 & 0.3157 & 0.6352 & 0.5604 & 0.4404 & 0.3205 \\ 
  & RES & 4.9122e-13 & 6.7939e-13 & 3.3733e-13 & 2.3734e-13 & 6.4263e-13 & 5.6174e-13 & 8.8138e-13 & 5.7073e-13 \\
  $(10^{-2},1-10^{-2})$ & IT & 68 & 26 & 24 & 23 & 229 & 184 & 111 & 58 \\
  & CPU & 0.6622 & 0.3572 & 0.3515 & 0.3493 & 1.6709 & 1.3866 & 0.9160 & 0.5644 \\ 
  & RES & 7.3474e-13 & 6.2237e-13 & 5.6842e-13 & 1.0826e-13 & 8.7396e-13 & 8.1203e-13 & 7.7411e-13 & 6.9077e-13 \\
  $(10^{-4},1-10^{-4})$ & IT & 112 & 37 & 32 & 32 & 1962 & 1559 & 895 & 463 \\
  & CPU & 0.9254 & 0.4330 & 0.4048 & 0.4094 & 12.8604 & 10.3231 & 5.9628 & 3.1721 \\ 
  & RES & 7.0958e-13 & 8.2890e-13 & 1.3132e-13 & 4.6285e-13 & 9.0652e-13 & 9.0419e-13 & 8.8695e-13 & 9.0046e-13 \\
  $(10^{-6},1-10^{-6})$ & IT & 100 & 52 & 38 & 40 & 15140 & 12057 & 6928 & 3615 \\
  & CPU & 0.8389 & 0.5318 & 0.4469 & 0.4637 & 97.8450 & 77.7533 & 45.1426 & 23.4178 \\ 
  & RES & 4.7920e-13 & 2.1132e-13 & 7.0744e-14 & 1.3569e-14 & 9.0909e-13 & 9.0817e-13 & 9.0878e-13 & 9.0725e-13 \\
  $(10^{-8},1-10^{-8})$ & IT & 104 & 52 & 241 & 50 & 105468 & 84556 & 49331 & 26165 \\
  & CPU & 0.8636 & 0.5297 & 0.4672 & 0.5346 & 678.0004 & 543.9350 & 319.1383 & 168.3982 \\ 
  & RES & 7.3068e-13 & 8.3467e-13 & 6.6746e-13 & 1.2801e-13 & 9.0905e-13 & 9.0874e-13 & 9.0920e-13 & 9.0737e-13 \\
  $(10^{-9},1-10^{-9})$ & IT & 107 & 56 & 49 & 60 & 260734 & 210351 & 124431 & 66969 \\
  & CPU & 0.8816 & 0.5588 & 0.5203 & 0.5971 & 1683.4272 & 1353.8946 & 801.2046 & 561.6521 \\ 
  & RES & 2.2160e-13 & 6.7625e-13 & 4.9518e-13 & 3.8368e-13 & 9.0909e-13 & 9.0833e-13  & 9.0940e-13 & 9.0864e-13 \\
  \bottomrule
\end{tabular}
\end{sidewaystable}

\begin{sidewaystable}
\centering
\caption{Numerical results for $n = 8192$}
\label{tab:n8192}
\begin{tabular}{cccccccccc}
  \toprule
  $(a,c)$ & Item & AA(1) & AA(3) & AA(5) & AA(8) & FP & MFP & NBJ & NBGS \\
  \midrule
  $(0.9,0.1)$ & IT & 6 & 6 & 6 & 6 & 8 & 8 & 7 & 5 \\
  & CPU & 1.0840 & 1.0987 & 1.0951 & 1.0877 & 1.1754 & 1.1855 & 1.1403 & 1.0552 \\
  & RES & 3.7858e-13 & 8.6389e-14 & 6.3206e-14 & 6.3206e-14 & 3.8711e-13 & 1.7321e-13 & 6.2768e-14 & 4.4191e-16 \\
  $(0.1,0.9)$ & IT & 33 & 21 & 18 & 17 & 66 & 54 & 37 & 20 \\
  & CPU & 2.0959 & 1.6639 & 1.5476 & 1.5151 & 3.3465 & 3.3578 & 2.5705 & 1.6243 \\ 
  & RES & 1.7853e-12 & 6.8092e-13 & 1.7769e-12 & 9.6635e-13 & 1.4162e-12 & 1.4974e-12 & 8.8421e-13 & 5.7260e-13 \\
  $(10^{-2},1-10^{-2})$ & IT & 63 & 25 & 22 & 21 & 223 & 178 & 108 & 56 \\
  & CPU & 3.1793 & 1.8143 & 1.6897 & 1.6514 & 9.1696 & 9.0883 & 5.7975 & 2.9814 \\ 
  & RES & 1.8084e-12 & 1.7286e-12 & 1.3659e-12 & 1.2474e-12 & 1.6488e-12 & 1.8117e-12 & 1.5294e-12 & 1.7226e-12 \\
  $(10^{-4},1-10^{-4})$ & IT & 109 & 34 & 31 & 32 & 1893 & 1505 & 864 & 448 \\
  & CPU & 4.9374 & 2.1441 & 2.0285 & 2.0497 & 71.3948 & 70.9941 & 41.1794 & 20.8786 \\ 
  & RES & 9.8466e-13 & 6.5734e-13 & 4.6889e-13 & 1.9392e-13 & 1.8175e-12 & 1.8035e-12 & 1.8150e-12 & 1.7896e-12 \\
  $(10^{-6},1-10^{-6})$ & IT & 100 & 49 & 37 & 39 & 14448 & 11514 & 6628 & 3466 \\
  & CPU & 4.5839 & 2.6971 & 2.2662 & 2.3174 & 184.0034 & 164.5215 & 95.1652 & 50.3798 \\ 
  & RES & 6.5132e-13 & 1.3125e-12 & 6.6824e-13 & 1.0122e-12 & 1.8180e-12 & 1.8157e-12 & 1.8181e-12 & 1.8102e-12 \\
  $(10^{-8},1-10^{-8})$ & IT & 87 & 49 & 42 & 46 & 98540 & 79120 & 46324 & 24660 \\
  & CPU & 4.1214 & 2.6954 & 2.4590 & 2.6037 & 990.0935 & 746.2888 & 386.8558 & 233.7374 \\ 
  & RES & 1.4841e-12 & 7.6747e-13 & 1.0326e-12 & 8.0241e-13 & 1.8188e-12 & 1.8174e-12 & 1.8174e-12 & 1.8178e-12 \\
  $(10^{-9},1-10^{-9})$ & IT & 64 & 49 & 44 & 49 & 238855 & 193179 & 114927 & 62217 \\
  & CPU & 3.2395 & 2.7019 & 2.5261 & 2.6625 & 2146.4811 & 1724.1584 & 895.6699 & 517.4697 \\ 
  & RES & 1.6310e-12 & 1.7158e-12 & 6.5997e-13 & 1.5136e-12 & 1.8188e-12 & 1.8185e-12 & 1.8183e-12 & 1.8189e-12 \\
  \bottomrule
\end{tabular}
\end{sidewaystable}

It is worth noting that, for each test case, 
the number of iterations tends to stagnate as the problem size $n$ grows.
Furthermore, for each fixed problem size, 
as the pair of parameters $(a,c)$ approaches the singular case $(0,1)$,
the number of iterations and computation time required by AA increase only slightly,
whereas those of the other four fixed-point iterative methods increase significantly.
This advantage of AA is further illustrated in 
Figures \ref{fig:IterTimeHistory_alphaC10-3}, \ref{fig:IterTimeHistory_alphaC10-5}
and \ref{fig:IterTimeHistory_alphaC10-7},
which depict the iteration histories for the nearly singular cases 
$(a,c) = (10^{-3},1-10^{-3})$, $(10^{-5},1-10^{-5})$ and $(10^{-7},1-10^{-7})$,
respectively, with problem sizes $n = 4096$ and $8192$.

\begin{figure}
  \centering
  \subfigure{\includegraphics[width=0.48\textwidth]{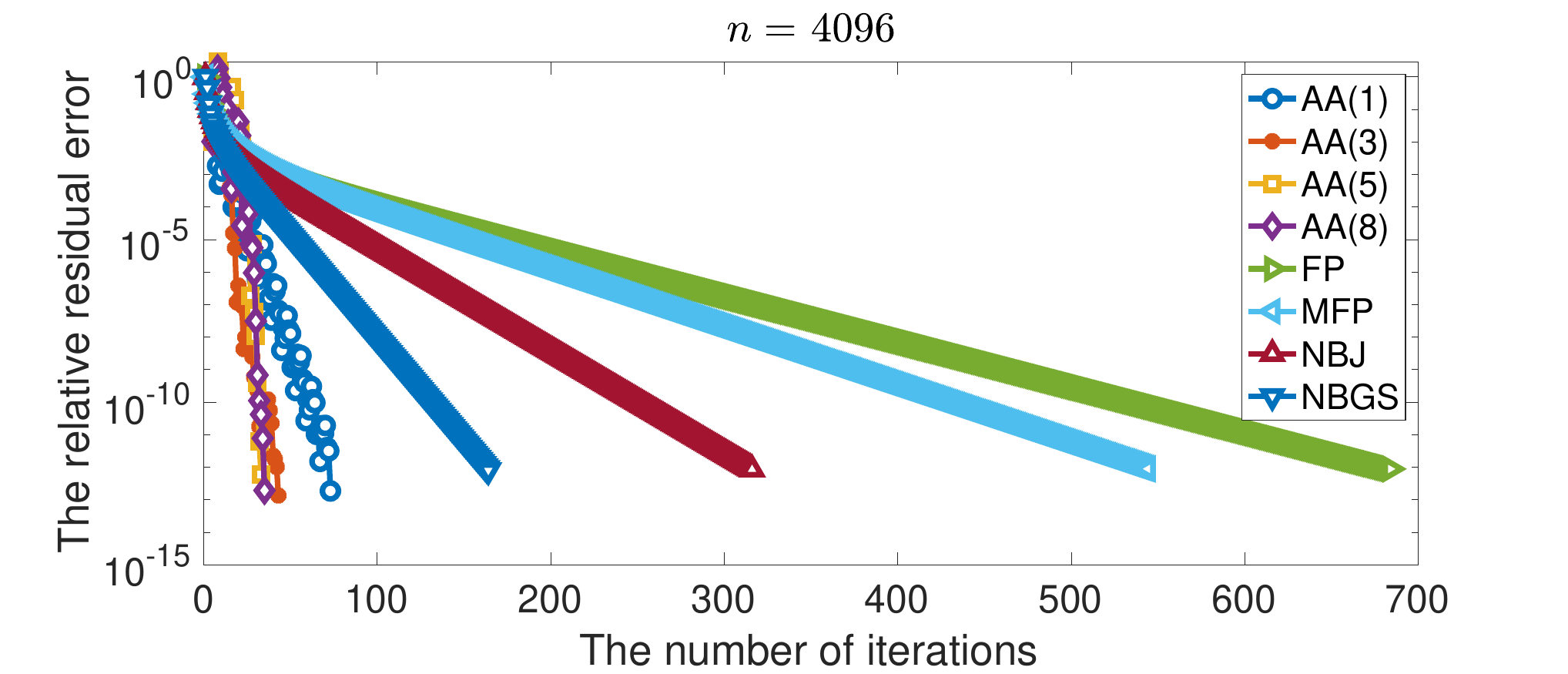}}\quad
  \subfigure{\includegraphics[width=0.48\textwidth]{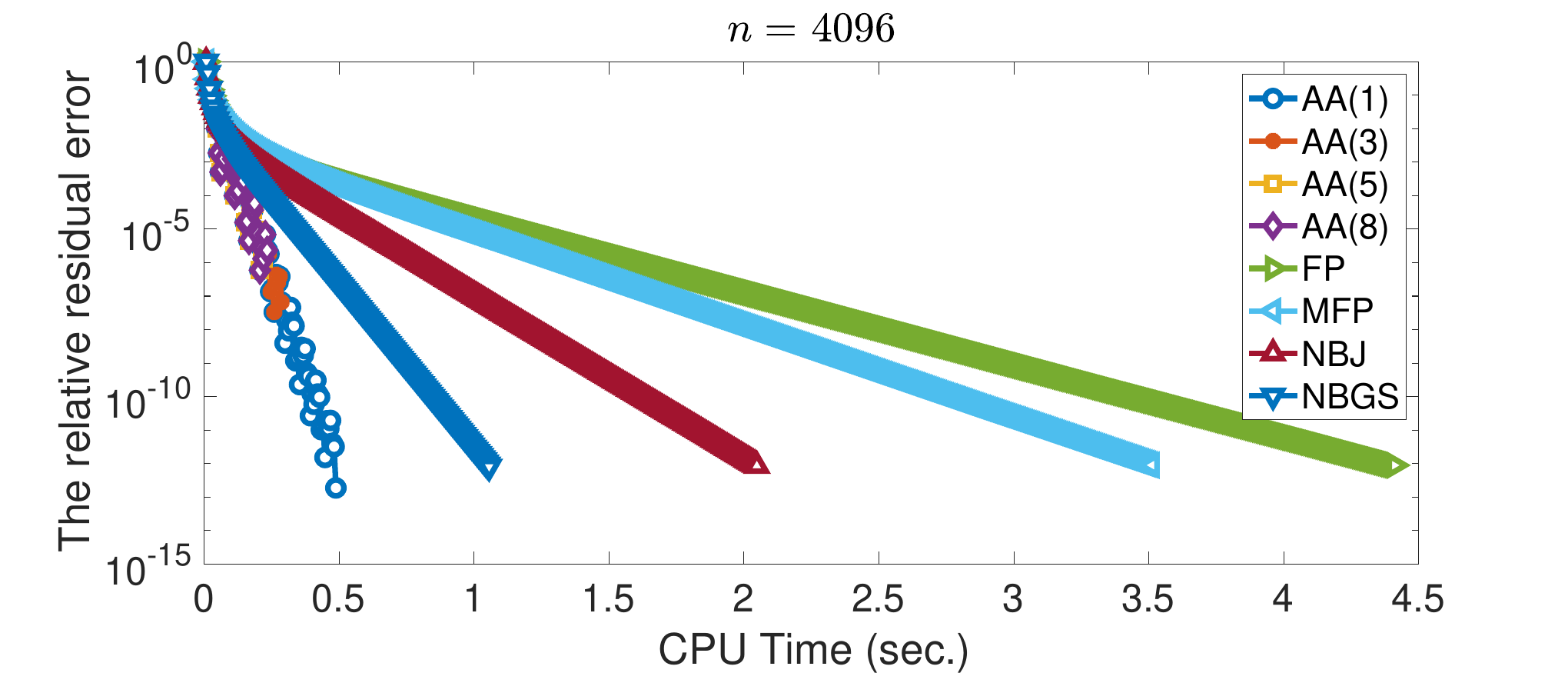}} \\
  \subfigure{\includegraphics[width=0.48\textwidth]{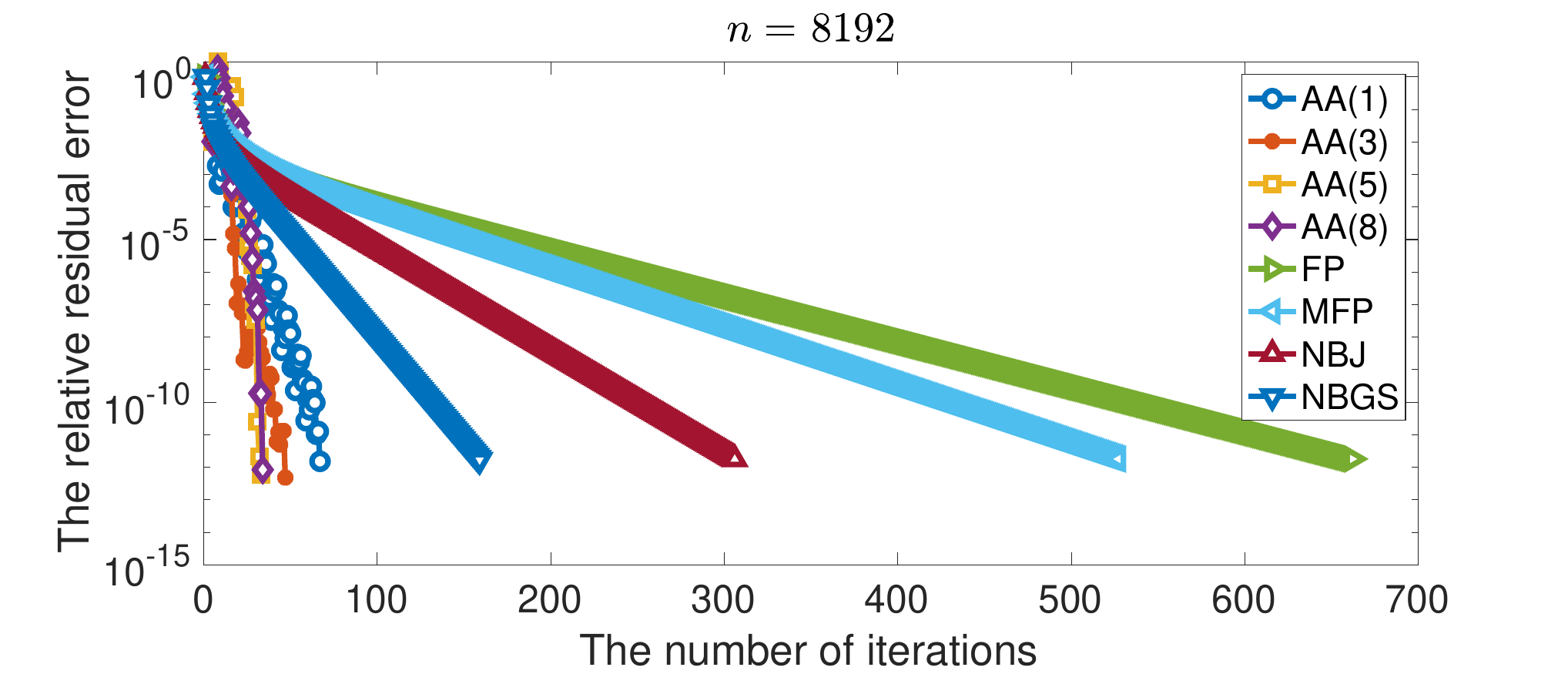}}\quad
  \subfigure{\includegraphics[width=0.48\textwidth]{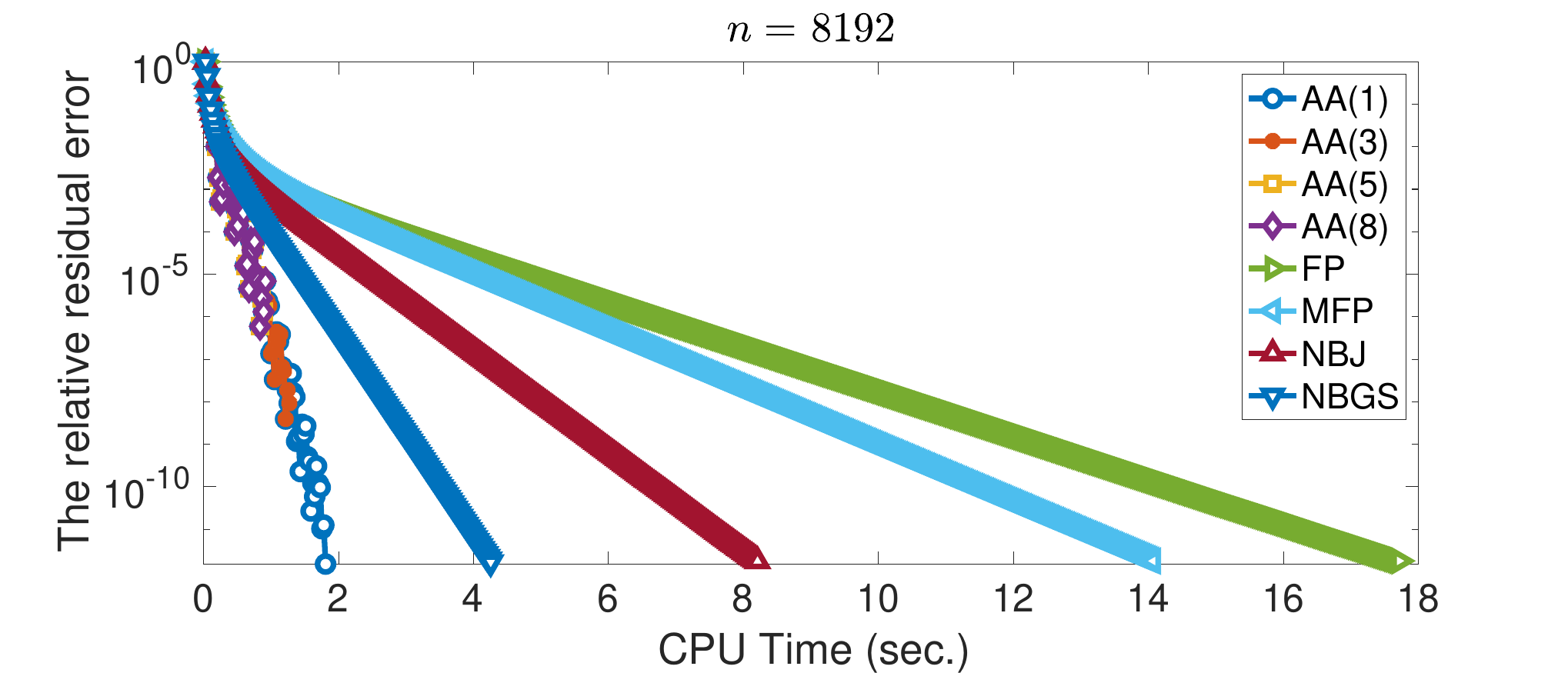}} \\
  \caption{Comparison of Anderson acceleration and other fixed-point methods
  for $(a, c) = (10^{-3},1-10^{-3})$ with problem sizes $n = 4096, 8192$.
  Left: number of iterations. Right: elapsed time.}
  \label{fig:IterTimeHistory_alphaC10-3}
\end{figure}

\begin{figure}
  \centering
  \subfigure{\includegraphics[width=0.48\textwidth]{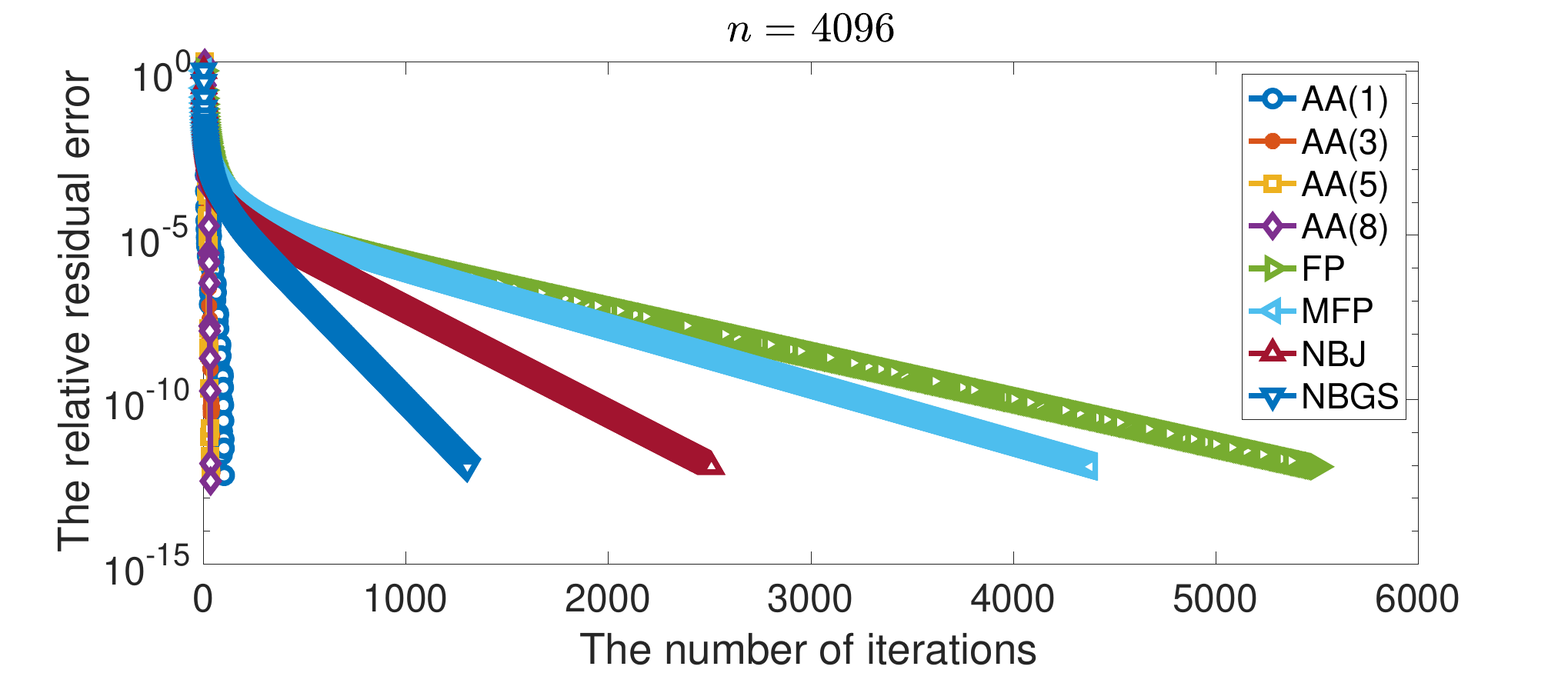}}\quad
  \subfigure{\includegraphics[width=0.48\textwidth]{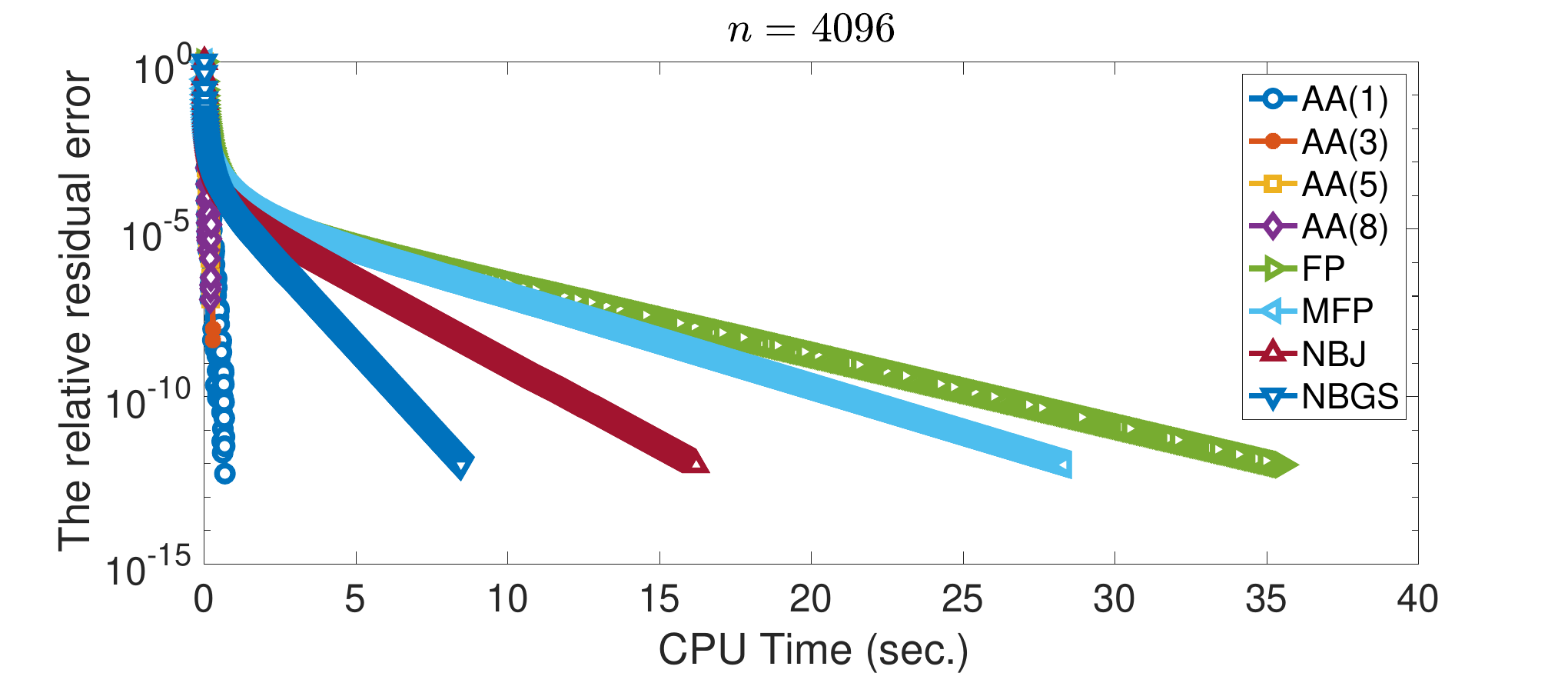}} \\
  \subfigure{\includegraphics[width=0.48\textwidth]{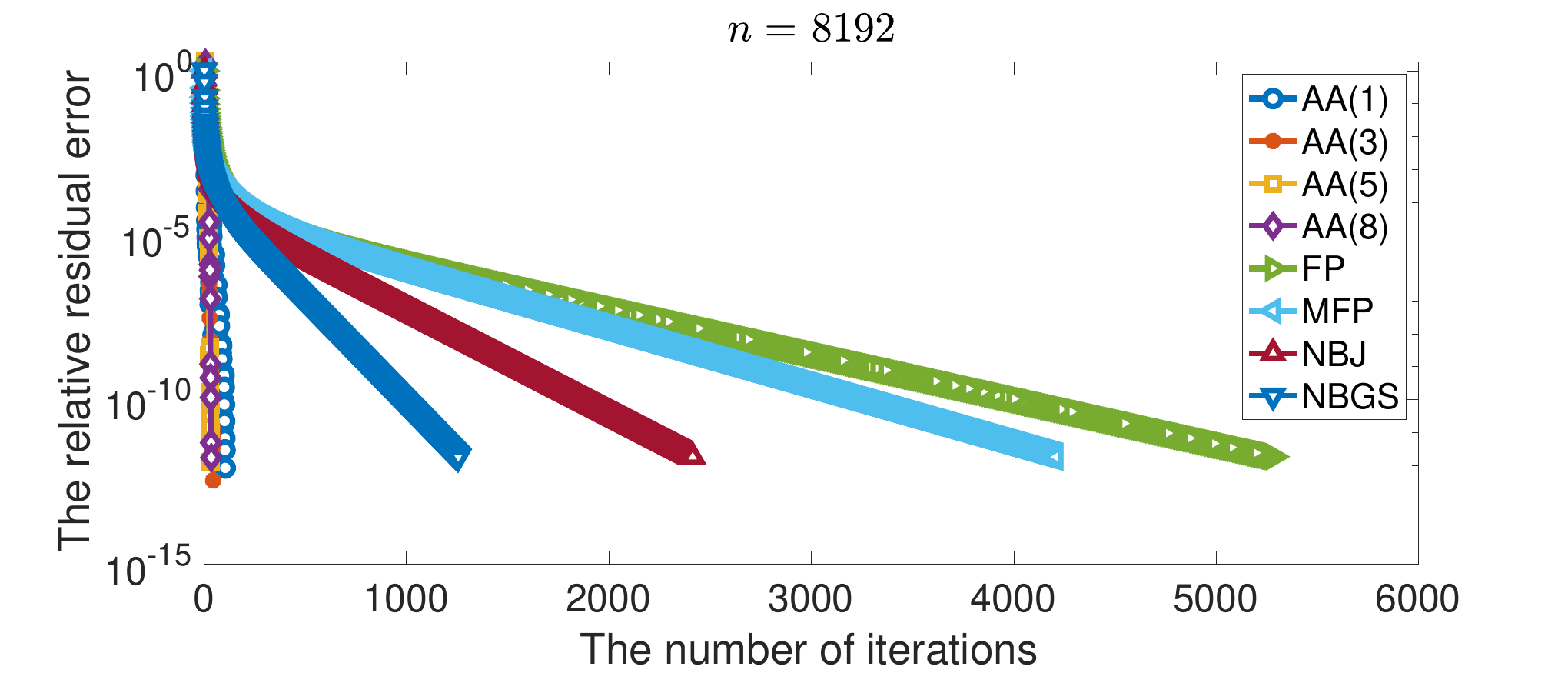}}\quad
  \subfigure{\includegraphics[width=0.48\textwidth]{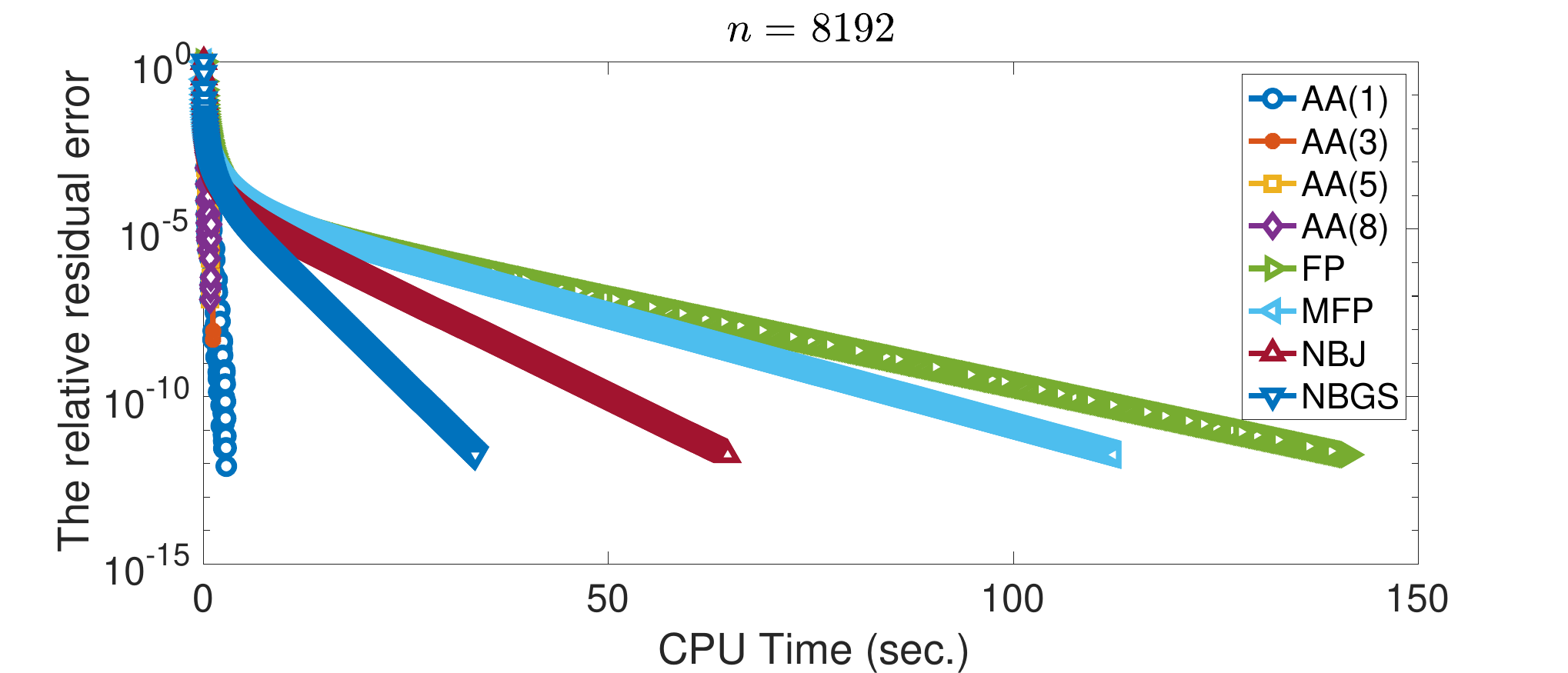}} \\
  \caption{Comparison of Anderson acceleration and other fixed-point methods
  for $(a, c) = (10^{-5},1-10^{-5})$ with problem sizes $n = 4096, 8192$.
  Left: number of iterations. Right: elapsed time.}
  \label{fig:IterTimeHistory_alphaC10-5}
\end{figure}

\begin{figure}
  \centering
  \subfigure{\includegraphics[width=0.48\textwidth]{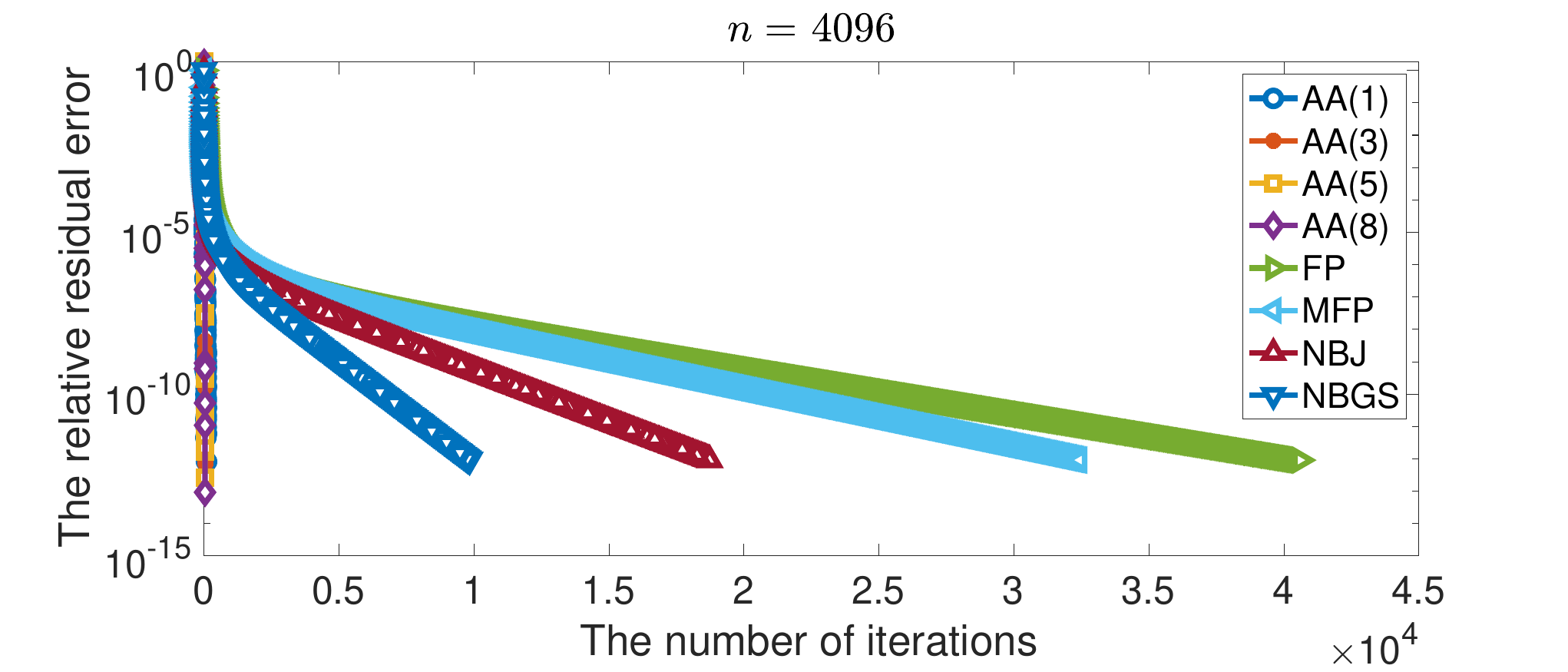}}\quad
  \subfigure{\includegraphics[width=0.48\textwidth]{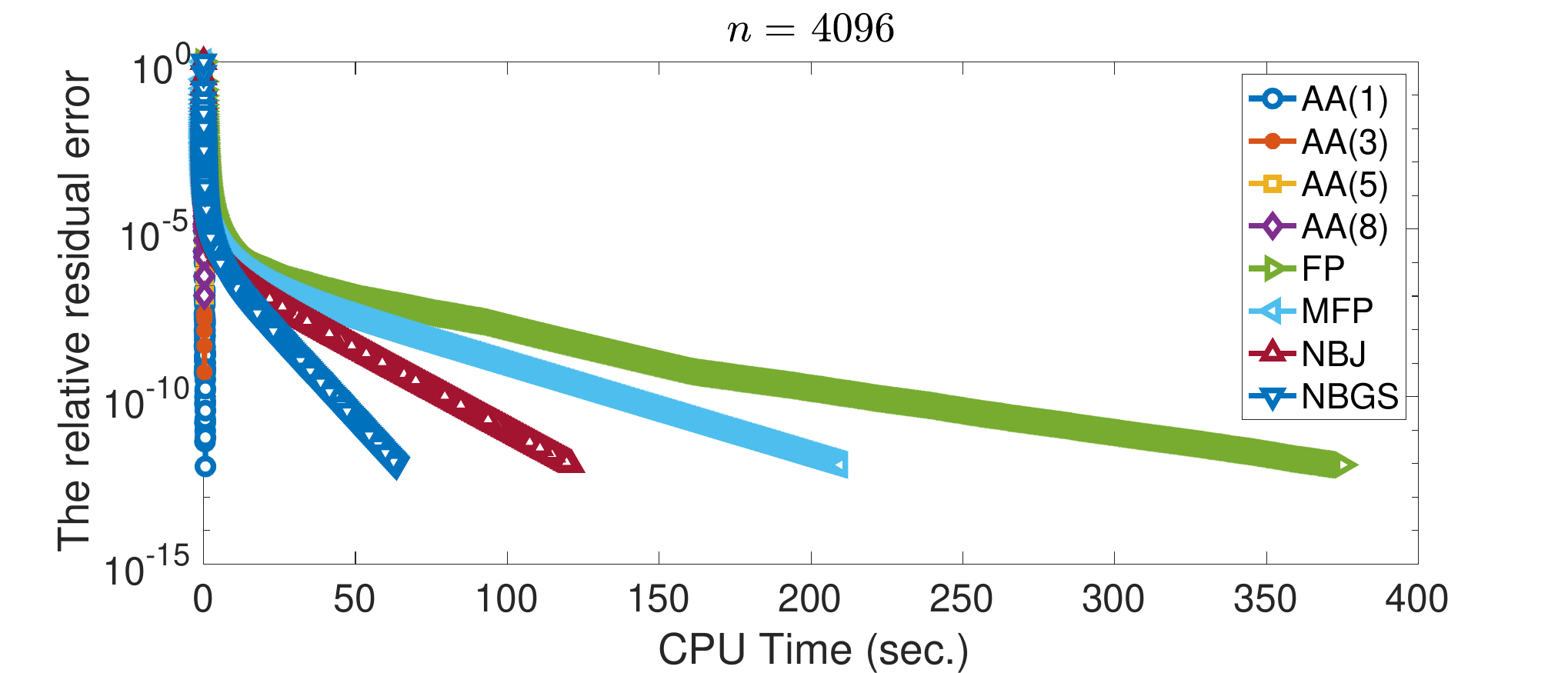}} \\
  \subfigure{\includegraphics[width=0.48\textwidth]{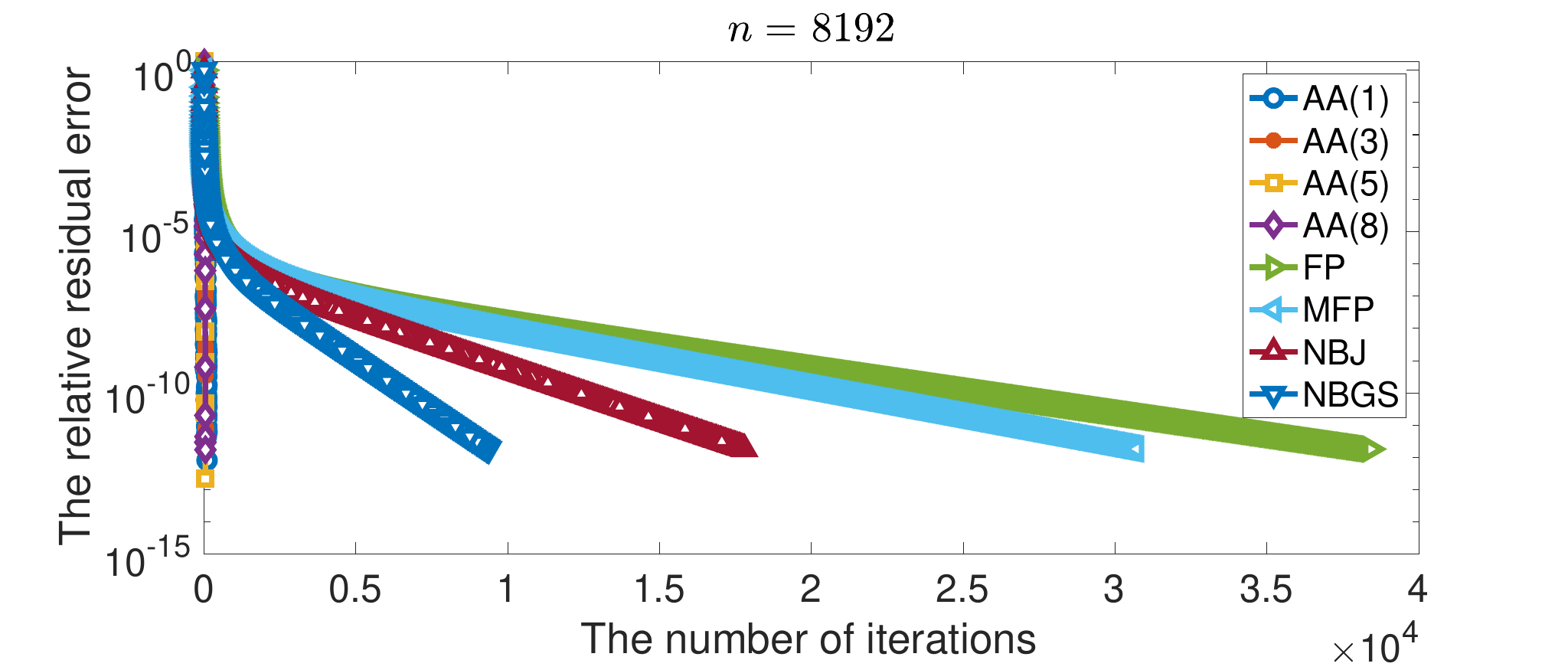}}\quad
  \subfigure{\includegraphics[width=0.48\textwidth]{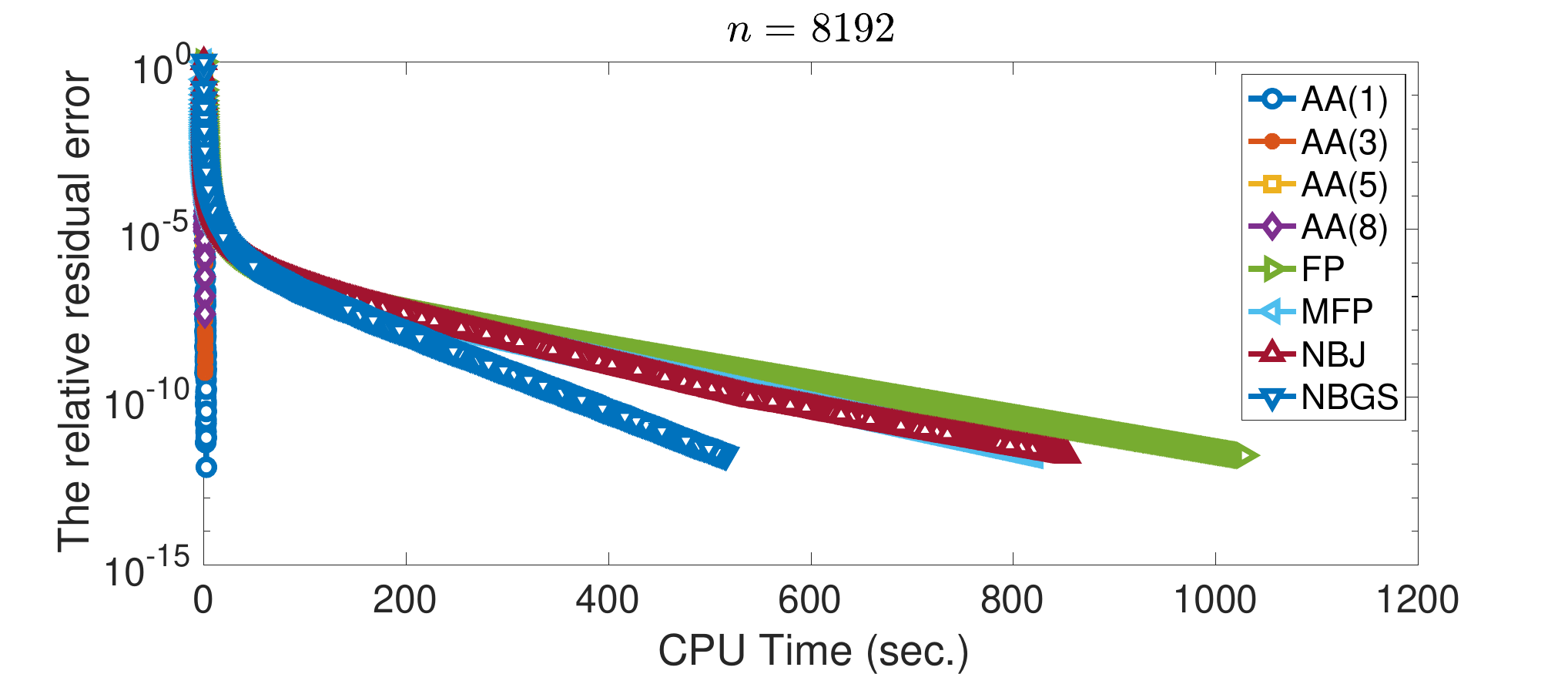}} \\
  \caption{Comparison of Anderson acceleration and other fixed-point methods
  for $(a, c) = (10^{-7},1-10^{-7})$ with problem sizes $n = 4096, 8192$.
  Left: number of iterations. Right: elapsed time.}
  \label{fig:IterTimeHistory_alphaC10-7}
\end{figure}

To conclude, compared to the other four fixed-point iterative methods,
we have found that AA for the NARE,
with varying depths $m$, yields comparable performance in the regular cases.
In addition, for nearly singular cases and large-scale problems,
AA achieves a significant reduction in both the number of iterations and execution time,
while exhibiting better desired accuracy in most cases.

%------------------------------------------------------------------------------
\section{Conclusions}
\label{sec:Conclusions}

In this paper, we have presented a new local convergence analysis
for Anderson acceleration applied to nonlinear equations under the assumptions that 
the first derivative of nonlinear operator is H\"{o}lder continuous,
and that the associated fixed-point operator is contractive.
The main results are encapsulated in Theorem \ref{th:LocalConvAnderson(m)},
which shows that Anderson acceleration is R-linear convergent.
This convergence rate is explicitly characterized by the unique root of equation \eqref{eq:q}.
In particular, when H\"{o}lder continuity reduces to Lipschitz continuity,
the resulting convergence factor is not determined only by the contraction factor $\theta$.
It also depends on the optimization gain $\eta_k$ defined in \eqref{cons:eta_k}
and the condition number $\kappa(\myvec{f}'(\myvec{x}^*))$.
Consequently, our analysis yields a more refined characterization of convergence behavior
compared to previous studies that rely exclusively on the contraction factor $\theta$.
Moreover, we obtained a new convergence rate of Anderson acceleration
for depth $m=1$.
We further demonstrated the applicability and efficiency of Anderson acceleration 
by solving the approximation of minimal positive solution of 
the special nonlinear equation \eqref{eq:f(u,v)=0},
which is derived from NARE \eqref{eq:NARE} arising from neutron transport theory.
The numerical results confirm that Anderson acceleration
performs efficiently in both regular and nearly singular cases,
especially for large-scale problems.
One goal of our future work is to explore whether Anderson acceleration
can be applied to aid in the convergence of Newton's method
for special singular nonlinear equations.

%------------------------------------------------------------------------------
\section*{Acknowledgments}

This work was supported by the Fujian Province Natural Science Foundation of China (Grant No. 2022J01896), 
the Education Research Projects for Young Teachers of Fujian Provincial Education Department (Grant No. JAT220197), 
Fujian Key Laboratory of Granular Computing and Applications, 
and Fujian Key Laboratory of Data Science and Statistics.

%------------------------------------------------------------------------------
% \bibliography{LingBib}
% \bibliographystyle{mysiam}
%----------------------------------------------------------

%------------------------------------------------------------------------------

\end{document}